\def\Mext{M_{\mathrm{ext}}}
\title[Improved fractal Weyl bounds]%
{Improved fractal Weyl bounds\\
for hyperbolic manifolds}
\author[Semyon Dyatlov, David Borthwick, and Tobias Weich]%
{Semyon Dyatlov\vskip.025in
with an appendix by David Borthwick, Semyon Dyatlov, and Tobias Weich}
\email{dyatlov@math.mit.edu}
\address{Department of Mathematics, Massachusetts Institute of Technology,
77 Massachusetts Ave, Cambridge, MA 02139}
\email{davidb@mathcs.emory.edu}
\address{Department of Mathematics and Computer Science, Emory University
Atlanta, GA 30322}
\email{weich@math.upb.de}
\address{Fakult\"at f\"ur Elektrotechnik, Informatik und Mathematik,
Institut f\"ur Mathematik,
Warburger Str. 100,
33098 Paderborn, Germany}
\begin{document}

\begin{abstract}
We give a new fractal Weyl upper bound for resonances of convex co-compact hyperbolic
manifolds in terms of the dimension $n$ of the manifold and
the dimension $\delta$ of its limit set. More precisely, we show
that as $R\to\infty$, the number of resonances in the box $[R,R+1]+i[-\beta,0]$
is $\mathcal O(R^{m(\beta,\delta)+})$, where the exponent
$m(\beta,\delta)=\min(2\delta+2\beta+1-n,\delta)$
changes its behavior at $\beta={n-1\over 2}-{\delta\over 2}$.
In the case $\delta<{n-1\over 2}$, we also give an improved resolvent upper bound
in the standard resonance free strip $\{\Im\lambda > \delta-{n-1\over 2}\}$.
Both results use the fractal uncertainty principle point of view
recently introduced in~\cite{hgap}.
The appendix presents numerical evidence for the Weyl upper bound.
\end{abstract}

\maketitle

\addtocounter{section}{1}
\addcontentsline{toc}{section}{1. Introduction}

In this paper we study asymptotics of scattering resonances
of convex co-compact hyperbolic quotients $(M,g)=\Gamma\backslash\mathbb H^n$.
Resonances are complex numbers which replace eigenvalues as discrete spectral
data of the Laplacian for non-compact manifolds~-- see for instance~\cite{BorthwickBook,dizzy}.
They are defined as poles of the scattering resolvent
\begin{equation}
  \label{e:scat-res}
\mathcal R(\lambda)=\Big(-\Delta_g-{(n-1)^2\over 4}-\lambda^2\Big)^{-1}:L^2_{\comp}(M)\to H^2_{\loc}(M),\quad
\lambda\in\mathbb C,
\end{equation}
which is the meromorphic continuation of the $L^2$ resolvent from the upper half-plane~-- see~\S\ref{s:vasy}.
Resonances correspond to zeroes of the Selberg zeta function~\cite[(3.1)]{GLZ}
\begin{equation}
  \label{e:selb-zeta}
Z_\Gamma(s)=\prod_{\gamma}\prod_{\alpha\in\mathbb N_0^{n-1}}
\Big(1-e^{-i\langle\theta(\gamma),\alpha\rangle}e^{-(s+|\alpha|)\ell(\gamma)}\Big),\quad
s={n-1\over 2}-i\lambda,
\end{equation}
where $\gamma$ varies in the set of primitive closed geodesics on $M$, $\ell(\gamma)$ is its period, and $\theta(\gamma)$ its holonomy spectrum~-- see~\cite{GLZ} for details.

Our main result is a bound on the number of resonances in strips, using the quantity
$$
\mathcal N(R,\beta)=\#\{\lambda\text{ resonance},\ \Re\lambda\in [R,R+1],\ \Im\lambda\geq -\beta\},\quad
R,\beta>0.
$$
\begin{theo}
  \label{t:ifwl}
Let $\delta\in [0,n-1]$ be the dimension of the limit set of $\Gamma$,
see e.g.~\cite[(5.2)]{hgap}. Then
for each $\beta\geq 0,\varepsilon>0$, there exists a constant $C$ such that
\begin{align}
  \label{e:ifwl}
\mathcal N(R,\beta)&\leq CR^{m(\beta,\delta)+\varepsilon},\quad R\to \infty;
\\
  \label{e:ifwl-exponent}
m(\beta,\delta)&:=\min(2\delta+2\beta+1-n,\delta).
\end{align}
Here resonances are counted with multiplicities, see~\eqref{e:multiplicities}.
\end{theo}
See Figure~\ref{f:basic-graphs}(a),(b). In the Appendix, we compare this upper bound
with numerically computed resonance data for several examples of hyperbolic surfaces.

The bound~\eqref{e:ifwl} is related to several previous results on distribution
of resonances (see~\cite{Nonnenmacher} for a more broad overview of results
in open quantum chaos):
\begin{figure}
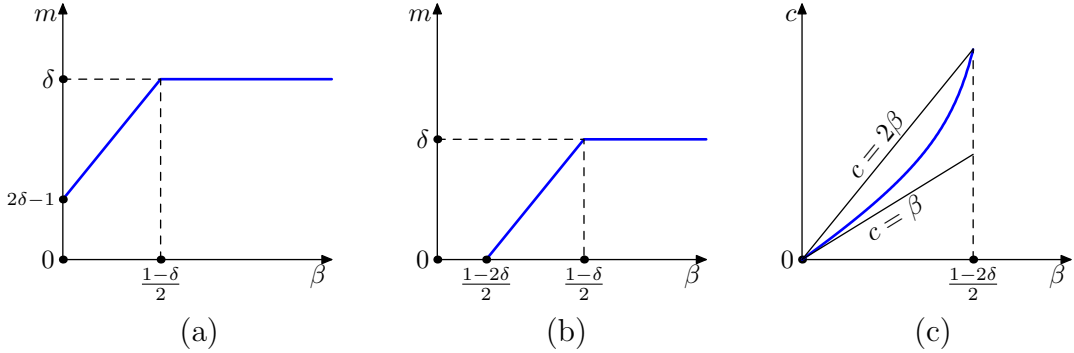

\includegraphics{ifwl.3}
\qquad
\includegraphics{ifwl.4}
\qquad
\includegraphics{ifwl.5}
\hbox to\hsize{\hss \qquad(a) \hss\hss (b) \hss\hss (c) \hss}
\caption{(a),(b) Plots of the exponent $m(\beta,\delta)$ in the Weyl bound~\eqref{e:ifwl},
for $n=2$ and (a) $\delta=0.6$ (b) $\delta=0.4$.
(c) Plot of the exponent $c(\beta,\delta)$ in the resolvent bound~\eqref{e:resolvator},
for $n=2$ and $\delta=0.15$. The straight lines are the previous resolvent bound $c=2\beta$ of~\cite{hgap}
and the lower bound $c=\beta$
of~\cite{lbnc}.}
\label{f:basic-graphs}
\end{figure}

\noindent 1. The bound
\begin{equation}
  \label{e:standard-fwl}
\mathcal N(R,\beta)\leq CR^\delta,\quad
R\to\infty
\end{equation}
was proved by Guillop\'e--Lin--Zworski~\cite{GLZ} for convex co-compact
Schottky quotients, including all convex co-compact hyperbolic surfaces.
(See also the earlier work of Zworski~\cite{ZwInventiones} in
the case of surfaces.)
Datchev--Dyatlov~\cite{fwl} proved~\eqref{e:standard-fwl}
 for all convex co-compact hyperbolic
quotients and a wider class of asymptotically hyperbolic
manifolds with hyperbolic trapped sets, using the methods
developed by Sj\"ostrand~\cite{SjostrandFWL}
and Sj\"ostrand--Zworski~\cite{SjostrandZworskiFWL}
in the case of Euclidean infinite ends.
Note that in contrast with~\eqref{e:ifwl}, the bound~\eqref{e:standard-fwl}
does not lose an $\varepsilon$ in the exponent.

\noindent 2. The standard Patterson--Sullivan spectral gap~\cite{Patterson3,Sullivan}
states that for $\delta<{n-1\over 2}$, there are no resonances in $\{\Im\lambda>\delta-{n-1\over 2}\}$, that is
$$
\mathcal N(R,\beta)=0,\quad
\beta<{n-1\over 2}-\delta.
$$
This is in agreement with the fact that $m(\beta,\delta)<0$ when $\beta<{n-1\over 2}-\delta$.
Essential gaps of larger size (depending in a complicated way on the quotient) have been
obtained by Naud~\cite{NaudGap}, Stoyanov~\cite{Stoyanov1}, and Dyatlov--Zahl~\cite{hgap}.

\noindent 3. In~\cite{Jakobson-Naud2}, Jakobson and Naud have conjectured an essential
gap of size ${n-1-\delta\over 2}$:
$$
\mathcal N(R,\beta)=0,\quad
\beta<{n-1-\delta\over 2},\quad
R\gg 1.
$$
While numerical evidence does not seem to confirm this conjecture,
it does show that the set of resonances becomes more dense
near the line $\Im\lambda=-{n-1-\delta\over 2}$~-- see the works of Borthwick~\cite[\S\S7,8]{BorthwickNum},
Borthwick--Weich~\cite[\S5.3]{Borthwick-Weich}, and the Appendix.
This is a special case of concentration of imaginary parts of resonances
near the pressure ${1\over 2}P(1)$ for open chaotic systems,
first discovered numerically by Lu--Sridhar--Zworski~\cite[Figure~2]{LSZ} for semiclassical zeta functions
on multi-disk scatterers and later observed in microwave experiments by Barkhofen \emph{et al.}~\cite[Figure~4]{ZworskiPRL}.
In the setting of open quantum maps, such concentration was observed numerically
by Shepelyanski~\cite[Figures~4 and~5]{Shepelyanski}
and Novaes~\cite{Novaes-Extra};
the recent work of Dyatlov--Jin~\cite{oqm} proves an analog of Theorem~\ref{t:ifwl}
for quantum open baker's maps.
Our exponent~\eqref{e:ifwl-exponent} is in agreement with these observations, since
it changes behavior at $\beta={n-1-\delta\over 2}$.

\noindent 4. In~\cite{NaudCount}, Naud obtained an improved Weyl upper bound
in dimension $n=2$,
$$
\mathcal N(R,\beta) \leq CR^{m'(\beta,\delta)},\quad
R\to\infty,
$$
where $m'(\beta,\delta)$ is some function satisfying
$$
m'(\beta,\delta)<\delta\quad\text{for }\beta<{1-\delta\over 2}.
$$
This result was extended to uniform bounds for congruence subgroups of arithmetic groups
by Jakobson--Naud~\cite{Jakobson-Naud3}. These bounds make essential use of total
discontinuity of the limit set, apply to surfaces only, and depend
on the choice of a particular Schottky representation of~$\Gamma$;
we also note that unlike~\eqref{e:ifwl-exponent},
$m'(\beta,\delta)$ is positive at the Patterson--Sullivan gap
$\beta={1\over 2}-\delta$.
The exponent in Theorem~\ref{t:ifwl} is always smaller than the ones
obtained in~\cite{NaudCount,Jakobson-Naud3}~-- see~\eqref{e:ifwl-ecponent-press}
and~\eqref{e:fn-relation}.

\noindent 5. We finally discuss known lower bounds on the number of resonances in strips.
Guillope--Zworski~\cite{GuillopeZworskiGAFA} showed that for $n=2$, the number
of resonances in $[0,R]+i[-\beta,0]$ cannot be $\mathcal O(R^{1-1/\beta})$, for $\beta>2$. A similar
result for higher dimensional manifolds was proved by Perry~\cite{PerryIMRN}.
Jakobson--Naud~\cite{Jakobson-Naud2}
proved that there are infinitely many resonances in $\{\Im\lambda>-\beta\}$,
for $\beta=(2\delta^2-\delta+1)/2$ for surfaces
and $\beta={3\over 4}-{\delta\over 2}$ for the special class of arithmetic surfaces with $\delta>{1\over 2}$.
Neither of these bounds matches~\eqref{e:ifwl}, since they give no information
for $\beta<{1-\delta\over 2}$ and the exponents of $R$ in the lower bounds are much smaller than $\delta$.
However, numerical computations indicate
that the bound~\eqref{e:ifwl} is saturated at least when $\beta>{n-1-\delta\over 2}$~--
see the Appendix. See also~\cite{ZworskiPRE}
for experimental data in the related case of many-disk scattering.

\smallsection{Outline of the proof of Theorem~\ref{t:ifwl}}
Theorem~\ref{t:ifwl} is proved in~\S\ref{s:proofs}; we give
an informal outline of the proof here.
We use the semiclassically rescaled spectral parameter
$\omega=h\lambda$, putting $h:=R^{-1}\ll 1$.

Assume first that $\lambda=\omega/h$ is a resonance, then there exists a
\emph{resonant state}
$$
u\in C^\infty(M),\quad
\Big(-h^2\Delta_g-{h^2(n-1)^2\over 4}-\omega^2\Big)u=0,\quad
u\text{ outgoing}.
$$
The outgoing condition can be formulated in terms of asymptotics of $u$
at the infinite ends of $M$. We use the recent approach due to Vasy~\cite{Vasy-AH1,Vasy-AH2}
(as reviewed in~\S\ref{s:vasy}; see~\cite[Chapter~5]{dizzy}
and~\cite{v4d} for expository treatments) which multiplies $u$ by a power of the boundary
defining function at conformal infinity and extends it
past the boundary of the even compactification of $M$, to obtain a smooth function
on a compact manifold without boundary $\Mext\supset M$.
The semiclassical scattering resolvent $R_h(\omega)=h^{-2}R(\omega/h)$ is expressed via the inverse of a family
of Fredholm operators (denoted $\mathcal P_h(\omega)$ in this paper) on a Sobolev space on $\Mext$. We denote by $\|u\|$ the norm of its extension to $\Mext$ in this Sobolev space.
We reduce the analysis to a compact region inside the original manifold $M$,
essentially treating the construction of~\cite{Vasy-AH1,Vasy-AH2} as a black box.

Let $\Gamma_\pm\subset T^*M$ be the incoming/outgoing tails and $K=\Gamma_+\cap\Gamma_-$
the trapped set, see~\S\ref{s:geometry}. It follows immediately from~\cite{Vasy-AH1,Vasy-AH2}
that $u$ is microlocally concentrated on $\Gamma_+\cap \{|\xi|_g=1\}$; in particular,
for each $h$-independent symbol $a(x,\xi)\in C_0^\infty(T^*M)$
and $\Op_h(a)$ the corresponding semiclassical pseudodifferential operator
(see~\cite{e-z}), we have
$$
\supp a\cap \Gamma_+\cap \{|\xi|_g=1\}=\emptyset\ \Longrightarrow\ \|\Op_h(a)u\|=\mathcal O(h^\infty)\|u\|.
$$
It was shown in~\cite[\S4.3]{hgap} (modulo localization to the cosphere bundle $\{|\xi|_g=1\}$,
which is proved in Lemma~\ref{l:elliptic-parametrix}) that $u$
is in fact microlocalized $h^\rho$ close to $\Gamma_+\cap \{|\xi|_g=1\}$,
for any $\rho<1$: namely there exists
$$
\chi_+(x,\xi;h)\in C_0^\infty(T^*M),\quad
\supp\chi_+\subset Ch^\rho\text{-neighborhood of }
\Gamma_+\cap \{|\xi|_g=1\},
$$
such that (assuming for simplicity that $\Op_h^{L_u}(1)$ is the identity operator;
see the next paragraph for the notation $\Op_h^{L_u}$)
\begin{equation}
  \label{e:chi+con}
u=\Op_h^{L_u}(\chi_+)u+\mathcal O(h^\infty)\|u\|\quad\text{microlocally near }K.
\end{equation}
In practice, we will take $\rho$ very close to 1.
The derivatives of the symbol $\chi_+$ grow like $h^{-\rho}$, therefore
it cannot be quantized using standard pseudodifferential calculus.
However, $\Gamma_+\cap \{|\xi|_g=1\}$ is foliated
by the leaves of the weak unstable foliation $L_u$ (see~\eqref{e:L-s-L-u}), and $\chi_+$
does not grow when differentiated along $L_u$. This makes it possible
to quantize $\chi_+$ using the quantization procedure $\Op_h^{L_u}$ developed in~\cite{hgap}, see~\S\ref{s:approx-inverse}.

Furthermore, \cite[\S4.3]{hgap} shows that $u$ cannot be too small on $\Gamma_-$: there exists
$$
\chi_-(x,\xi;h)\in C_0^\infty(T^*M),\quad
\supp\chi_-\subset Ch^\rho\text{-neighborhood of }\Gamma_-,
$$
such that (modulo an arbitrarily small loss in the power of $h$)
\begin{equation}
  \label{e:chi-con}
\|u\|\leq Ch^{\rho\Im\omega/h}\|\Op_h^{L_s}(\chi_-)u\|.  
\end{equation}
Here we again use the calculus of~\cite[\S3]{hgap}, this time associated
to the weak stable foliation $L_s$. Together, \eqref{e:chi+con} and~\eqref{e:chi-con}
give
\begin{equation}
  \label{e:total-con}
\|u\|\leq Ch^{\rho\Im\omega/h}\|\Op_h^{L_s}(\chi_-)\Op_h^{L_u}(\chi_+)u\|.
\end{equation}
In~\cite{hgap}, an operator norm bound on the product $\Op_h^{L_s}(\chi_-)\Op_h^{L_u}(\chi_+)$
(called the fractal uncertainty principle) was used to
show an essential spectral gap. In the present paper, we give a
stronger version of~\eqref{e:total-con}, Proposition~\ref{l:main-parametrix}, which
constructs a smoothing operator
$$
\mathcal A(\omega)=\mathcal J(\omega)\Op_h^{L_s}(\chi_-)\Op_h^{L_u}(\chi_+)+\mathcal O(h^\infty),\quad
\|\mathcal J(\omega)\|\leq Ch^{\rho\Im\omega/h},
$$
such that if $\lambda=\omega/h$ is a resonance, then $1-\mathcal A(\omega)$
is not invertible. Then each resonance produces a zero
of the Fredholm determinant
$$
F(\omega)=\det(1-\mathcal A(\omega)^2).
$$
By Jensen's inequality, to show~\eqref{e:ifwl}
with $m=2\delta+2\beta+1-n$ it remains to prove the Hilbert--Schmidt bound
(see Proposition~\ref{l:hs})
$$
\|\mathcal A(\omega)\|_{\HS}^2\leq Ch^{2\Im\omega/h+n-1-2\delta-\varepsilon}.
$$
The term $h^{2\Im\omega/h}$ comes from the operator norm of $\mathcal J(\omega)$,
thus it remains to show
\begin{equation}
  \label{e:banana}
\|\Op_h^{L_s}(\chi_-)\Op_h^{L_u}(\chi_+)\|_{\HS}^2\leq Ch^{n-1-2\delta-\varepsilon}.
\end{equation}
The latter estimate can be heuristically explained as follows: since both $\chi_\pm$
are bounded, the left-hand side of~\eqref{e:banana} should behave like
$h^{-n}$ times the volume in $T^*M$ of the set $\supp\chi_-\cap\supp\chi_+$.
Locally near any point in $K\cap\{|\xi|_g=1\}$, we may view this set as the product
of (here $\Lambda_\Gamma\subset\mathbb S^{n-1}$ denotes the limit set of the group):
\begin{enumerate}
\item an $h^\rho$ sized interval in the direction transversal to the energy surface;
\item a fixed size interval in the direction of the geodesic flow;
\item an $h^\rho$ neighborhood of $\Lambda_\Gamma$ in the stable
direction, with volume $\mathcal O(h^{\rho(n-1-\delta)})$;
\item an $h^\rho$ neighborhood of $\Lambda_\Gamma$ in the unstable
direction, with volume $\mathcal O(h^{\rho(n-1-\delta)})$.
\end{enumerate}
Thus for $\rho=1$, the volume of $\supp\chi_-\cap\supp\chi_+$ is
$\mathcal O(h^{2n-1-2\delta})$, finishing the proof.

To obtain~\eqref{e:ifwl} with $m=\delta$, we argue in the same way,
but putting $\mathcal A(\omega)=\Op_h^{L_u}(\chi_+)$ and using~\eqref{e:chi+con} only.
The support of $\chi_+$ can be viewed as a product of the four sets above,
with the set~(4) replaced by a fixed size interval, thus for $\rho=1$ it has
volume $\mathcal O(h^{n-\delta})$, leading
to the Hilbert--Schmidt bound $\|\mathcal A(\omega)\|_{\HS}^2\leq Ch^{-\delta}$
and to~\eqref{e:ifwl}.

The above proof shows why the exponent $m(\beta,\delta)$ changes behavior
at $\beta={n-1-\delta\over 2}$: past this point, the growth as $h\to 0$ of
$\|\mathcal J(\omega)\|^2$ is faster than the decay of the volume
of the $h$-neighborhood of $\Gamma_-$, thus it is no longer benefical to
use~\eqref{e:chi-con}. Therefore, for $\beta<{n-1-\delta\over 2}$
we use localization on both $\Gamma_+$ and $\Gamma_-$
and for $\beta>{n-1-\delta\over 2}$, we only use localization on $\Gamma_+$.

\smallsection{Upper bounds on the resolvent}
Using the strategy of the proof of Theorem~\ref{t:ifwl} explained above,
we also obtain the following resolvent bound inside the Patterson--Sullivan gap
(see~\S\ref{s:proofs} for the proof):
\begin{theo}
  \label{t:gapbound}
Assume that $\delta<{n-1\over 2}$. Then for each $\beta\in (0,{n-1\over 2}-\delta)$, $\psi\in C_0^\infty(M)$,
there exists $C_0$ such that for all $\varepsilon>0$,
\begin{equation}
  \label{e:resolvator}
\|\psi \mathcal R(\lambda)\psi\|_{L^2\to L^2}\leq C_\varepsilon|\lambda|^{-1+c(\beta,\delta)+\varepsilon},\quad
\Re\lambda\geq C_0,\quad
\Im\lambda\in [-\beta,1],
\end{equation}
where (see Figure~\ref{f:basic-graphs}(c))
\begin{equation}
  \label{e:c-value}
c(\beta,\delta)={\beta(n-1-2\beta)\over n-1-\delta-2\beta}.
\end{equation}
\end{theo}
The estimate~\eqref{e:resolvator} with the power
$c=2\beta$ was proved in~\cite[Theorem~3 and~(5.4)]{hgap}.
On the other hand, using the recent result of Dyatlov--Waters~\cite[Theorem~1]{lbnc}
(which applies to hyperbolic ends as explained in~\cite[\S1.2]{lbnc};
the Lyapunov exponent $\lambda_{\max}$ of the Hamiltonian
flow $H_{|\xi|_g^2}$ on the sphere bundle is equal to 2), we see that~\eqref{e:resolvator}
cannot hold with $c<\beta$. The value $c(\beta,\delta)$ given in~\eqref{e:c-value}
lies between these lower and upper bounds:
$$
\beta\leq c(\beta,\delta)< 2\beta\quad\text{for }
\beta\in \Big(0,{n-1\over 2}-\delta\Big).
$$
Note that in the degenerate case $\delta=0$, we have $c(\beta,\delta)=\beta$,
that is our upper bound matches the lower bound of~\cite{lbnc}.

\section{Approximate inverses}

In this section, we review the framework for resonances
on hyperbolic manifolds used in~\cite{hgap}. We next construct an
approximate inverse to the modified
spectral family of the Laplacian, which is one
of the key components of the proof~-- see~Proposition~\ref{l:main-parametrix}.

\subsection{Geometry and dynamics}
\label{s:geometry}

Let $(M,g)$ be an $n$-dimensional convex co-compact hyperbolic manifold;
see~\cite{BorthwickBook} for the formal definition in dimension~2 and
\cite{perry} for general dimensions.
Consider the function
$$
p\in C^\infty(T^*M\setminus 0;\mathbb R),\quad
p(x,\xi)=|\xi|_g.
$$
The Hamiltonian flow
$$
e^{tH_p}:T^*M\setminus 0\to T^*M\setminus 0
$$
is the homogeneous version of the geodesic flow.
This flow is hyperbolic in the sense that
the tangent space $T(T^*M\setminus 0)$ decomposes
into the stable, unstable, flow, and dilation directions,
see~\cite[(4.3)]{hgap}. We will use the
weak stable/unstable subbundles of $T(T^*M\setminus 0)$
\begin{equation}
  \label{e:L-s-L-u}
L_s=\mathbb RH_p\oplus E_s,\quad
L_u=\mathbb RH_p\oplus E_u,
\end{equation}
see~\cite[(4.6)]{hgap}. By~\cite[Lemma~4.1]{hgap},
$L_s$ and $L_u$ are Lagrangian foliations
in the sense of~\cite[Definition~3.1]{hgap}.

As in~\cite[\S4.1.2]{hgap}, consider a function
$$
r:M\to\mathbb R;\quad
\ddot r>0\quad\text{on }\{r\geq 0\}\cap \{\dot r=0\}
$$
where dots denote derivatives with respect to the flow $H_p$
of the lift of $r$ to $T^*M\setminus 0$.
We moreover choose $r$ so that the sublevel sets
$\{r\leq R\}$ are compact for all $R$.
In fact, one may take $r:=\tilde x^{-1}-r_1$ where
$\tilde x$ is the boundary defining function
of a conformal compactification of $M$ and
$r_1>0$ is a large constant. Then in the infinite
ends of $M$, the function $r$ roughly behaves like
the exponential of distance to the compact core.

Define the incoming/outgoing
tails
$$
\Gamma_\pm=\{(x,\xi)\in T^*M\setminus 0\mid r(e^{tH_p}(x,\xi))\text{ is bounded as }t\to \mp\infty\}
$$
and the trapped set (which we assume to be nonempty)
$$
K=\Gamma_+\cap\Gamma_-\ \subset\ \{r<0\}.
$$
Then $\Gamma_+$ is foliated by the leaves of $L_u$ and
$\Gamma_-$ is foliated by the leaves of $L_s$, as follows from~\cite[(4.8) and~(4.12)]{hgap}.
The intersection $K\cap \{|\xi|_g=c\}$ is compact for any constant $c$.

\subsection{Scattering resolvent}
\label{s:vasy}

The existence of the meromorphic continuation of the resolvent $\mathcal R(\lambda)$
defined in~\eqref{e:scat-res}
was originally proved by Mazzeo--Melrose~\cite{MazzeoMelrose},
Guillarmou~\cite{GuillarmouAH}, and Guillop\'e--Zworski~\cite{GuillopeZworskiAA};
see~\cite[\S4.2]{hgap} for more references.
We use the recent approach of Vasy~\cite{Vasy-AH1,Vasy-AH2},
refering to~\cite[\S4.2]{hgap} for details and to~\cite[Chapter~5]{dizzy},
\cite{v4d}
for expository treatments.%
\footnote{The present paper uses the original approach of~\cite{Vasy-AH1,Vasy-AH2}
featuring complex absorbing operators on a manifold without boundary. The presentation in~\cite{dizzy,v4d}
instead does analysis on a manifold with boundary. Since the differences between
these constructions lie beyond the infinity of the original
manifold $M$, either could be used in our proofs.}
This approach relies on semiclassical analysis; we refer the reader to~\cite{e-z}
and~\cite[Appendix~E]{dizzy} for an introduction
to this subject and to~\cite[\S2]{hgap} for the notation used here.

Consider the semiclassically rescaled resolvent
$$
\mathcal R_h(\omega):=h^{-2}\mathcal R(\lambda),\quad
\omega:=h\lambda\in\Omega,
$$
where we fix $\beta_0>0$ and put
\begin{equation}
  \label{e:Omega}
\Omega:=[1-2h,1+2h]+ih[-\beta_0,1].
\end{equation}
As in~\cite{Vasy-AH1,Vasy-AH2} and~\cite[\S4.2]{hgap}, we use the semiclassical differential operator
\begin{equation}
  \label{e:vasy-operator}
\mathcal P_h(\omega)\in\Psi^2_h(\Mext);\quad
\mathcal P_h(\omega)=\psi_2\Big(-h^2\Delta_g-{h^2(n-1)^2\over 4}-\omega^2\Big)\psi_1\text{ on }M,
\end{equation}
where $\Mext$ is a compact $n$-dimensional manifold without boundary
containing $M$ as an open subset and $\psi_1,\psi_2\in C^\infty(M)$ are certain nonvanishing
functions depending on $h,\omega$ and satisfying
\begin{equation}
  \label{e:conjugators}
\psi_1=\psi_2=1\quad\text{near }\{r\leq r_0\},
\end{equation}
where $r_0>0$ can be fixed arbitrarily large; note that this implies
\begin{equation}
  \label{e:same-symbol}
\sigma_h(\mathcal P_h(\omega))=p^2-\omega^2\quad\text{near }\{r\leq r_0\}.
\end{equation}
Then (see for instance~\cite[Theorem~4.3]{Vasy-AH2})
$\mathcal P_h(\omega)$ is a family of Fredholm operators $\mathcal X\to\mathcal Y$ depending
holomorphically on $\omega\in\Omega$, where
$$
\mathcal X=\{u\in H^s_h(\Mext)\mid \mathcal P_h(1)u\in H^{s-1}_h(\Mext)\},\quad
\mathcal Y:=H^{s-1}_h(\Mext),
$$
$s>{1\over 2}+\beta_0$ is fixed,
and the $h$-dependent norm on $\mathcal X$ is defined as follows:
$$
\|u\|_{\mathcal X}^2=\|u\|^2_{H^s_h(\Mext)}+\|\mathcal P_h(1)u\|^2_{H^{s-1}_h(\Mext)}.
$$
By construction of the operator $\mathcal P_h(\omega)$,
we have $\partial_\omega\mathcal P_h(\omega)\in\Psi^1_h(\Mext)$, implying that
$\mathcal P_h(\omega)-\mathcal P_h(1)\in h\Psi^1_h(\Mext)$
for $\omega\in\Omega$.
Therefore $\mathcal P_h(\omega)$ is bounded $\mathcal X\to\mathcal Y$
uniformly in $h$. Moreover, for each $u\in\mathcal X$, we have
\begin{equation}
  \label{e:Y-Y}
\|u\|_{\mathcal X}\ \leq\
C\|u\|_{H^s_h(\Mext)}+C\|\mathcal P_h(\omega)u\|_{\mathcal Y}\ \leq\
C\|u\|_{H^{s+1}_h(\Mext)}.
\end{equation}
The inverse $\mathcal P_h(\omega)^{-1}:\mathcal Y\to\mathcal X$
is meromorphic in $\omega\in\Omega$ (see for instance~\cite[Theorem~4.7]{Vasy-AH2})
and the rescaled scattering resolvent $\mathcal R_h(\omega)$ can be expressed via this inverse
(see for instance~\cite[(5.2)]{Vasy-AH2}). Therefore, Theorem~\ref{t:ifwl}
follows from an upper bound on the number of poles of $\mathcal P_h(\omega)^{-1}$.

\subsection{Approximate inverse statement}
  \label{s:approx-inverse}

Our proofs rely on semiclassical analysis; we refer the reader to~\cite{e-z} for a
comprehensive introduction and to~\cite[\S2]{hgap} for the notation used here. In particular
we use
\begin{itemize}
\item the classical symbol classes $S^k(T^*M)$, $S^k_h(T^*M)$ and the corresponding
class of pseudodifferential operators $\Psi^k_h(M)$;
\item the principal symbol map $\sigma_h:\Psi^k_h(M)\to S^k(T^*M)$;
\item the wavefront set $\WFh(A)\subset \overline T^*M$
and the elliptic set $\Ell_h(A)\subset \overline T^*M$
of $A\in\Psi^k_h(M)$ where $\overline T^*M$ is the fiber-radially compactified
cotangent bundle;
\item the class $\Psi^{\comp}_h(M)\subset\bigcap_k\Psi^k_h(M)$
of compactly supported and compactly microlocalized pseudodifferential operators.
\end{itemize}
We will moreover use the semiclassical calculus associated to a Lagrangian
foliation developed in~\cite[\S3]{hgap}. This calculus makes it possible to
quantize $h$-dependent symbols $a\in C_0^\infty(U)$ which satisfy~\cite[Definition~3.2]{hgap}
\begin{equation}
  \label{e:funny-symbols}
\sup_{x,\xi}|Y_1\dots Y_mZ_1\dots Z_k a(x,\xi;h)|\leq Ch^{-\rho k},
\end{equation}
for each vector fields $Y_1,\dots,Y_m,Z_1,\dots,Z_k$ on $U$
such that $Y_1,\dots,Y_m\in C^\infty(U;L)$.
Here $\rho\in [0,1)$, $U\subset T^*M$ is an open subset, and $L$ is a Lagrangian foliation on $U$.

The class of symbols satisfying~\eqref{e:funny-symbols} is denoted by
$S^{\comp}_{L,\rho}(U)$, and the resulting quantization procedure, by~\cite[(3.11)]{hgap}
$$
a\in S^{\comp}_{L,\rho}(U)\ \mapsto\ \Op_h^L(a):\mathcal D'(M)\to C_0^\infty(M).
$$
We denote the corresponding class of operators by~$\Psi^{\comp}_{h,L,\rho}(U)$.
By~\cite[Lemma~3.12]{hgap}, each $A\in\Psi^{\comp}_{h,L,\rho}(U)$ is pseudolocal
and compactly microlocalized; that is,
the wavefront set $\WF'_h(A)$ is a compact subset
of the diagonal of $T^*M$. Therefore, $A$ is bounded uniformly in $h$
as an operator $H^{-N}_{h}(\Mext)\to H^N_{h}(\Mext)$ for all $N$.

For symbols $a\in C_0^\infty(U)$
which belong to the class $S^0_h(T^*M)$
(in particular, all derivatives of $a$ are bounded uniformly in $h$),
$\Op_h^L$ gives a quantization procedure for the
class $\Psi^{\comp}_h(M)$ of standard compactly microlocalized semiclassical pseudodifferential operators.

\begin{figure}
\includegraphics{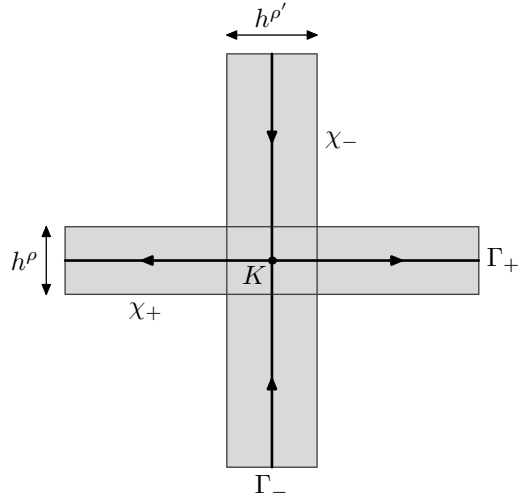}
\caption{The supports of the functions $\chi_\pm$, 
with thick lines depicting trajectories of the flow $e^{tH_p}$.
The function $\chi_+$ additionally localizes to an $h^\rho$ neighborhood
of the energy surface $\{|\xi|_g=1\}$.}
\label{f:chi-plots}
\end{figure}

We now introduce several cutoffs.
Fix $h$-independent functions
\begin{align}
  \label{e:chi-selection}
\chi\in C_0^\infty(T^*M\setminus 0;[0,1]),&\quad
\chi=1\quad\text{near }K\cap\{|\xi|_g=1\};\\
\widetilde\chi\in C_0^\infty(\mathbb R;[0,1]),&\quad
\widetilde\chi=1\quad\text{near }[-1,1].
\end{align}
Fix $\rho,\rho'\in [0,1)$ and define $h$-dependent
symbols $\chi_\pm\in C_0^\infty(T^*M\setminus 0;[0,1])$ by
\begin{equation}
  \label{e:the-cutoffs}
\begin{aligned}
\chi_+&=\chi(\chi\circ e^{-\rho\log(1/h)H_p})\widetilde\chi\Big({p-1\over h^\rho}\Big),\\
\chi_-&=\chi(\chi\circ e^{\rho'\log(1/h)H_p}).
\end{aligned}
\end{equation}
In practice, we will take $\rho$ very close to $1$ depending
on the value of $\varepsilon$ given in Theorem~\ref{t:ifwl}.
We will take $\rho'$ close to 1 to obtain the
improved exponent $m(\beta)=2\delta+2\beta+1-n$
and close to $0$ to recover the standard
exponent $m(\beta)=\delta$.

Near $K$, $\chi_+$ is a cutoff to an $h^\rho$ neighborhood of $\Gamma_+\cap \{|\xi|_g=1\}$ and $\chi_-$ is a cutoff to an $h^{\rho'}$ neighborhood of $\Gamma_-$~--
see~\cite[Lemma~4.3]{hgap} and Figure~\ref{f:chi-plots}. By~\cite[Lemma~4.2]{hgap}
and because $L_u$ is tangent to the level sets of $p$, we have
\begin{equation}
  \label{e:chi-classes}
\chi_+\in S^{\comp}_{L_u,\rho}(T^*M\setminus 0),\quad
\chi_-\in S^{\comp}_{L_s,\rho'}(T^*M\setminus 0).
\end{equation}
We are now ready to formulate the approximate inverse statement
for $\mathcal P_h(\omega)$ whose
proof occupies the rest of this section.
The proof of Theorem~\ref{t:ifwl} in~\S\ref{s:hs}
will combine this statement with a Hilbert--Schmidt
norm bound on the remainder (Proposition~\ref{l:hs}).
\begin{prop}
  \label{l:main-parametrix}
Fix $\rho,\rho'\in (0,1)$ and $\varepsilon_0>0$.
Then there exists $W\in\Psi^{\comp}_h(M)$ and $h$-dependent families of operators on $\Mext$ holomorphic in $\omega\in\Omega$,
\begin{align}
  \label{e:para-bounds-Z}
\mathcal Z(\omega):\mathcal Y\to\mathcal X,&\quad
\|\mathcal Z(\omega)\|_{\mathcal Y\to\mathcal X}\leq Ch^{-1-(\rho+\rho')(\beta_0+\varepsilon_0)};\\
  \label{e:para-bounds-J}
\mathcal J(\omega):\mathcal D'\to C^\infty,&\quad
\|\mathcal J(\omega)\|_{H^{-N}_h\to H^N_h}\leq C_Nh^{\rho'(h^{-1}\Im\omega-\varepsilon_0)}
\end{align}
with $\beta_0$ appearing in~\eqref{e:Omega}, such that for all $\omega\in\Omega$,
we have on $\mathcal X$
\begin{equation}
  \label{e:main-parametrix}
1=\mathcal Z(\omega)\mathcal P_h(\omega)+\mathcal J(\omega)\Op_h^{L_s}(\chi_-)
W\Op_h^{L_u}(\chi_+)+\mathcal E(\omega).
\end{equation}
Here the remainder $\mathcal E(\omega)$ is $\mathcal O(h^\infty)_{\mathcal D'\to C^\infty}$, meaning that for all $N$,
\begin{equation}
  \label{e:main-remainder}
\|\mathcal E(\omega)\|_{H^{-N}_h(\Mext)\to H^N_h(\Mext)}=\mathcal O(h^N).
\end{equation}
\end{prop}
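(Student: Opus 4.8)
The plan is to upgrade the a priori estimates for resonant states in~\cite[\S4.3]{hgap} into the operator-valued parametrix identity~\eqref{e:main-parametrix}, via three localization steps: reduce to a neighborhood of the trapped set using the Vasy framework of~\cite{Vasy-AH1,Vasy-AH2}, sharpen the localization to an $h^\rho$-neighborhood of $\Gamma_+\cap\{|\xi|_g=1\}$ by propagating backward along $e^{tH_p}$ for an Ehrenfest time $\rho\log(1/h)$, and insert the stable cutoff $\Op_h^{L_s}(\chi_-)$ by propagating forward for time $\rho'\log(1/h)$. For the first step I would invoke the microlocal estimates underlying~\cite{Vasy-AH1,Vasy-AH2} as organized in~\cite[\S4.2]{hgap}: ellipticity of $\mathcal P_h(\omega)$ off the characteristic variety $\{p^2=\Re\omega^2\}$, together with Lemma~\ref{l:elliptic-parametrix} to localize in energy; the radial-point estimates at conformal infinity with the sign fixed by the outgoing condition; and propagation of singularities along $e^{tH_p}$. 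Since every point of the characteristic variety outside $\Gamma_+\cap\{|\xi|_g=1\}$ either escapes to $\{r\text{ large}\}$ under the backward flow, where the complex absorbing construction of~\cite{Vasy-AH1,Vasy-AH2} gives control, or leaves the energy surface, assembling these parametrices produces a holomorphic family $\mathcal Z_0(\omega):\mathcal Y\to\mathcal X$ with $\|\mathcal Z_0(\omega)\|\le Ch^{-1}$ (the $h^{-1}$ being the structural loss of the radial-point estimates in the strip $\Omega$ of~\eqref{e:Omega}) and a cutoff $a_0\in C_0^\infty(T^*M\setminus 0)$ equal to $1$ near $K\cap\{|\xi|_g=1\}$ and supported near it, such that on $\mathcal X$
\[
1=\mathcal Z_0(\omega)\mathcal P_h(\omega)+\Op_h(a_0)+\mathcal O(h^\infty)_{H^{-N}_h\to H^N_h}.
\]

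For the second step I would propagate $\Op_h(a_0)$ backward along $e^{tH_p}$ for time $T=\rho\log(1/h)$ by a long-time (Ehrenfest) Egorov argument. The portion that has left a small fixed neighborhood of $K$ is absorbed into a new $\mathcal Z(\omega)\mathcal P_h(\omega)$ term by the escape estimate of the first step, while the surviving portion is microlocalized in the $Ch^\rho$-neighborhood of $\Gamma_+\cap\{|\xi|_g=1\}$ near $K$, on which $\chi_+$ of~\eqref{e:the-cutoffs} is identically $1$; by the composition and microlocal invertibility properties of the foliation calculus of~\cite[\S3]{hgap} this surviving portion can be written as $W\Op_h^{L_u}(\chi_+)$ modulo $\mathcal O(h^\infty)$, with $W\in\Psi^{\comp}_h(M)$ elliptic near $K\cap\{|\xi|_g=1\}$. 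The only new loss is a factor $Ch^{-\rho(\beta_0+\varepsilon_0)}$ for the two propagators over the time $T$, with $\varepsilon_0$ absorbing subprincipal and commutator contributions and $\rho<1$ ensuring that the conjugated cutoffs remain admissible symbols~\eqref{e:funny-symbols} in the calculus of~\cite[\S3]{hgap}.

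For the third step I would run the analogous argument on $W\Op_h^{L_u}(\chi_+)$ with the forward flow and time $T'=\rho'\log(1/h)$: the part that leaves a neighborhood of $\Gamma_-$ is again absorbed into $\mathcal Z(\omega)\mathcal P_h(\omega)+\mathcal O(h^\infty)$, the surviving part picks up the factor $\Op_h^{L_s}(\chi_-)$ (using that $\chi_-$ of~\eqref{e:the-cutoffs} is $1$ on the relevant $Ch^{\rho'}$-neighborhood of $\Gamma_-$), and the remaining conjugating propagator becomes $\mathcal J(\omega)$, pre-composed with a compactly microlocalized cutoff so that it maps $\mathcal D'\to C^\infty$ and composes with $\Op_h^{L_s}(\chi_-)$ as stated. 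The norm of $\mathcal J(\omega)$ is governed by the damping coefficient of $\mathcal P_h(\omega)$, proportional to $\Im\omega$, integrated over the flow of duration $T'$, which reproduces $\|\mathcal J(\omega)\|_{H^{-N}_h\to H^N_h}\le C_Nh^{\rho'(h^{-1}\Im\omega-\varepsilon_0)}$ in agreement with~\eqref{e:chi-con}, at the cost of a further factor $Ch^{-\rho'(\beta_0+\varepsilon_0)}$ in the parametrix norm. Summing the three contributions to the parametrix gives $\mathcal Z(\omega)$ with $\|\mathcal Z(\omega)\|_{\mathcal Y\to\mathcal X}\le Ch^{-1-(\rho+\rho')(\beta_0+\varepsilon_0)}$; holomorphy in $\omega\in\Omega$ is inherited since all symbols and propagators depend holomorphically on $\omega$; and collecting the $\mathcal O(h^\infty)$ terms into $\mathcal E(\omega)$, using~\eqref{e:Y-Y} to pass between Sobolev orders, gives~\eqref{e:main-parametrix}--\eqref{e:main-remainder}.

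The main obstacle is the Ehrenfest-time bookkeeping in the second and third steps: one must verify that after flowing for time $\rho\log(1/h)$, resp.\ $\rho'\log(1/h)$, the escaped mass is genuinely $\mathcal O(h^\infty)$ — which is precisely why $\rho,\rho'<1$ is imposed, so the conjugated cutoffs stay in the symbol classes~\eqref{e:funny-symbols} — and that the operator-norm losses are exactly $Ch^{-\rho(\beta_0+\varepsilon_0)}$ and $Ch^{\rho'(h^{-1}\Im\omega-\varepsilon_0)}$, with $\varepsilon_0$ absorbing all lower-order contributions. This is the step where the a priori estimates of~\cite[\S4.3]{hgap} genuinely must be re-derived as parametrix identities holomorphic in $\omega$; the elliptic and radial-point parametrices, the pseudolocality statements, and the compositions within the foliation calculus are then routine.
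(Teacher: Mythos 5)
Your proposal follows essentially the same architecture as the paper's proof: reduction to a fixed neighborhood of the trapped set by treating the Vasy framework as a black box (the paper's Lemmas~\ref{l:basicpar-1}--\ref{l:basicpar-2}), backward propagation over the Ehrenfest time $\rho\log(1/h)$ to sharpen the unstable localization (Lemma~\ref{l:gamma+parametrix}) plus an $h^\rho$-scale elliptic estimate in energy (Lemma~\ref{l:elliptic-parametrix}), and forward propagation over time $\rho'\log(1/h)$ to insert the stable cutoff with the propagator $\mathcal J(\omega)$ carrying the damping factor (Lemma~\ref{l:gamma-parametrix}); the norm bookkeeping you give matches the paper's. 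The one substantive implementation point the paper emphasizes that you gloss over is that the long-time propagation is carried out by iterating a bounded-time parametrix identity $\sim\log(1/h)$ times rather than in one step, with the per-step operator-norm bound~\eqref{e:J-omega-norm-bound} kept sharp up to $\varepsilon_1$ so that the cumulative loss stays $\exp((\beta_0+\varepsilon_0)t)$ rather than blowing up; your version would need exactly this kind of careful iteration to justify that the escaping mass is genuinely $\mathcal O(h^\infty)$.
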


\subsection{Reduction to the trapped set}

We start the proof of Proposition~\ref{l:main-parametrix} by
reducing the analysis to a fixed neighborhood of the trapped set.
This is done by means of two approximate inverse statements,
Lemma~\ref{l:basicpar-1} and~\ref{l:basicpar-2},
strengthening~\cite[Lemma~4.4]{hgap}. These statements rely on the
details of the construction of~\cite{Vasy-AH1,Vasy-AH2} and once they are established,
we may treat the infinity of $M$ as a black box.

The following lemma in particular implies that resonant
states (i.e. elements of the kernel of $\mathcal P_h(\omega)$),
when restricted to $\{r\leq r_0\}$, are microlocally negligible outside
any $h$-independent neighborhood of $\Gamma_+\cap \{|\xi|_g=1\}$:
\begin{lemm}
  \label{l:basicpar-1}
Assume that $A_1\in\Psi^0_h(\Mext)$, $\WFh(A_1)\subset \{r\leq r_0\}\subset \overline T^*M$,
where $r_0$ is given in~\eqref{e:conjugators},
and
\begin{equation}
  \label{e:basicpar-cond}
\WFh(A_1)\cap \Gamma_+\cap \{|\xi|_g=1\}=\emptyset.
\end{equation}
Then we have on $\mathcal X$,
\begin{equation}
  \label{e:basicpar-1}
A_1=Z_1(\omega)\mathcal P_h(\omega)+\mathcal O(h^\infty)_{\mathcal D'\to C^\infty},
\end{equation}
where $Z_1(\omega)$ is holomorphic in $\omega\in\Omega$
and $\|Z_1(\omega)\|_{\mathcal Y\to\mathcal X}\leq Ch^{-1}$.
\end{lemm}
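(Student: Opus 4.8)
The operator identity \eqref{e:basicpar-1} amounts, by a standard functional-analytic argument, to the a priori estimate that $\|A_1u\|_{\mathcal X}$ is controlled by $h^{-1}\|\mathcal P_h(\omega)u\|_{\mathcal Y}$ modulo an $\mathcal O(h^\infty)$ error; but in order to keep the $\omega$-dependence of $Z_1(\omega)$ manifestly holomorphic I would construct $Z_1(\omega)$ directly as a microlocal parametrix. Using a microlocal partition of unity I would split $A_1=A_1'+A_1''$, where $\WFh(A_1')$ lies in the elliptic set of $\mathcal P_h(\omega)$ and $\WFh(A_1'')$ lies in a small neighbourhood of $\{|\xi|_g=1\}\cap\{r\le r_0\}$. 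This is possible because, by \eqref{e:same-symbol}, on $\{r\le r_0\}$ one has $\sigma_h(\mathcal P_h(\omega))=p^2-\omega^2$ with $p=|\xi|_g$, which for $\omega\in\Omega$ (hence $\Re\omega=1+\mathcal O(h)$, $\Im\omega=\mathcal O(h)$) is elliptic on $\{|p-1|\ge c\}$ for any fixed $c>0$, in particular near fiber infinity. Moreover $\WFh(A_1)$ is compact and, by \eqref{e:basicpar-cond}, disjoint from $\Gamma_+\cap\{|\xi|_g=1\}$; since $\Gamma_+$ is invariant under fiber scaling, a small energy-neighbourhood of $\Gamma_+\cap\{|\xi|_g=1\}$ stays uniformly close to it, so for $c$ small enough $\WFh(A_1'')$ may in addition be taken disjoint from $\Gamma_+$. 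On $A_1'$ I would apply the usual semiclassical elliptic parametrix: there is $Z_1'\in\Psi^{-2}_h(M)$, holomorphic in $\omega$, with $Z_1'\mathcal P_h(\omega)=A_1'+\mathcal O(h^\infty)_{\mathcal D'\to C^\infty}$ and $\|Z_1'\|_{\mathcal Y\to\mathcal X}=\mathcal O(1)$ by \eqref{e:Y-Y}.

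For $A_1''$ I would propagate along the geodesic flow $e^{tH_p}$, which near $\{r\le r_0\}$ is, up to reparametrisation, the Hamilton flow of $\Re\sigma_h(\mathcal P_h(\omega))$ (note this flow does not depend on $\omega$, since $\omega^2$ is constant). The dynamical input is that any $q\in\WFh(A_1'')$ satisfies $q\notin\Gamma_+$, so $r(e^{tH_p}q)\to+\infty$ as $t\to-\infty$; by the convexity $\ddot r>0$ on $\{r\ge0\}\cap\{\dot r=0\}$ the backward trajectory of $q$ enters $\{r>r_0\}$ and remains there, and by compactness there is a uniform time $T_0$ within which this occurs for every $q\in\WFh(A_1'')$. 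A positive-commutator propagation-of-singularities argument along the segments $\{e^{tH_p}q:-T_0\le t\le0\}$ then yields, for a suitable $B_1\in\Psi^0_h(\Mext)$ microlocalised in $\{r\ge r_0\}$, an operator $Z_1''$ holomorphic in $\omega$ with $Z_1''\mathcal P_h(\omega)=A_1''-B_1+\mathcal O(h^\infty)$ and $\|Z_1''\|_{\mathcal Y\to\mathcal X}=\mathcal O(h^{-1})$. Here the $h^{-1}$ is the standard one-derivative loss of real-principal-type propagation, a total loss over the compact flowout rather than one per unit time; and since $\Im\sigma_h(\mathcal P_h(\omega))=\mathcal O(h)$ and $T_0$ is fixed, its contribution is only a bounded factor $e^{CT_0\beta_0}$.

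It remains to absorb the term $B_1$ microlocalised in $\{r\ge r_0\}$, and this is where the argument must open the ``black box'' of \cite{Vasy-AH1,Vasy-AH2}: by the construction of $\mathcal P_h(\omega)$ recalled around \eqref{e:vasy-operator} and in \cite[\S4.2]{hgap}, the conjugators $\psi_1,\psi_2$ and the complex absorbing structure placed beyond $\{r\le r_0\}$ are arranged precisely so that the backward-escaping flow runs into the absorbing (elliptic) region; this provides an a priori estimate, uniform and holomorphic for $\omega\in\Omega$, controlling $B_1u$ by $\mathcal P_h(\omega)u$ with $\mathcal O(h^{-1})$ loss, equivalently an operator identity $B_1=Z_1'''\mathcal P_h(\omega)+\mathcal O(h^\infty)$ with $\|Z_1'''\|_{\mathcal Y\to\mathcal X}=\mathcal O(h^{-1})$. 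Setting $Z_1(\omega):=Z_1'+Z_1''+Z_1'''$ then gives \eqref{e:basicpar-1} with the stated bounds, the remainder being $\mathcal O(h^\infty)_{\mathcal D'\to C^\infty}$ since all constituents are compactly microlocalised and $\WFh(A_1)$ is compact; note these estimates are added rather than composed, so the loss stays $\mathcal O(h^{-1})$. I expect the last point --- extracting from \cite{Vasy-AH1,Vasy-AH2} the a priori control near infinity with exactly the $h^{-1}$ loss and uniformly in $\omega\in\Omega$ --- to be the main obstacle; the interior elliptic and propagation estimates are routine. Finally, only $\Gamma_+$ and not $\Gamma_-$ enters because the propagation runs \emph{backward} towards infinity; a state whose wavefront set meets $\Gamma_+\cap\{|\xi|_g=1\}$ genuinely requires an interior control term there, which is the content of the weaker \cite[Lemma~4.4]{hgap} that this lemma strengthens.
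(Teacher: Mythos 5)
The paper's proof does not decompose $A_1$ at all. Instead it inserts a complex absorbing operator $Q_1\in\Psi^{\comp}_h(M)$ elliptic on $K\cap\{|\xi|_g=1\}$, with $\WFh(Q_1)$ chosen so close to $K$ that it avoids the entire backward flowout of $\WFh(A_1)\cap\{|\xi|_g=1\}$. Then $\mathcal P_h(\omega)-iQ_1$ is invertible with $\mathcal O(h^{-1})$ bounds by Vasy's nontrapping result, and one sets $Z_1(\omega)=A_1(\mathcal P_h(\omega)-iQ_1)^{-1}$; the error $-iA_1(\mathcal P_h(\omega)-iQ_1)^{-1}Q_1$ is $\mathcal O(h^\infty)$ precisely by the semiclassically outgoing property, because no backward trajectory from $\WFh(A_1)$ hits $\WFh(Q_1)$. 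This gives $Z_1(\omega)$ that is \emph{manifestly} holomorphic, and the full $h^{-1}$ estimate in a single step.

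Your route is genuinely different: you split $A_1=A_1'+A_1''$ into elliptic and near-characteristic pieces, propagate $A_1''$ backward for a fixed time $T_0$ into $\{r\ge r_0\}$, and push the remainder onto the Vasy black box. This is a reasonable alternative, and the elliptic step and the bounded-time propagation are indeed routine (and correctly noted to incur only a bounded factor $e^{CT_0\beta_0}$ rather than a power of $h$). Your observation that conicity of $\Gamma_+$ lets you arrange $\WFh(A_1'')\cap\Gamma_+=\emptyset$ is also correct and necessary.

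The weak point is step 3, and I would flag two issues there. First, the sentence "this provides an a priori estimate \dots equivalently an operator identity $B_1=Z_1'''\mathcal P_h(\omega)+\mathcal O(h^\infty)$'' slides over a real gap: an a priori \emph{estimate} of the form $\|B_1u\|\le Ch^{-1}\|\mathcal P_h(\omega)u\|+\mathcal O(h^\infty)\|u\|$ does not, on its own, yield a \emph{holomorphic} right approximate inverse $Z_1'''(\omega)$. To get a concrete holomorphic $Z_1'''$ you must build it; the cleanest way to do that is exactly what the paper does, namely build it out of the holomorphic family $(\mathcal P_h(\omega)-iQ_1)^{-1}$. (The positive-commutator proofs of propagation and radial estimates can be made constructive, but one has to say so, and in Vasy's framework this is usually packaged as invertibility of $\mathcal P_h(\omega)-iQ$ for a suitable $Q$ plus the outgoing wavefront-set property, which is what the quoted \cite[Theorems~4.8, 4.9]{Vasy-AH2} deliver.) Second, "the backward-escaping flow runs into the absorbing (elliptic) region'' is slightly off: some backward flow lines from $\{r\ge r_0\}$ tend to the radial set $L_+$ rather than to the absorbing operator, and one then needs the high-regularity radial source estimate, not just ellipticity of the absorber. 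Neither point is fatal, but as written your step 3 is asserting roughly the same black-box content as the paper uses, while the paper packages it more tightly by letting a single well-placed $Q_1$ near $K$ turn the whole problem into a nontrapping one and then killing it with the outgoing property — no decomposition of $A_1$ needed at all.
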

\begin{figure}
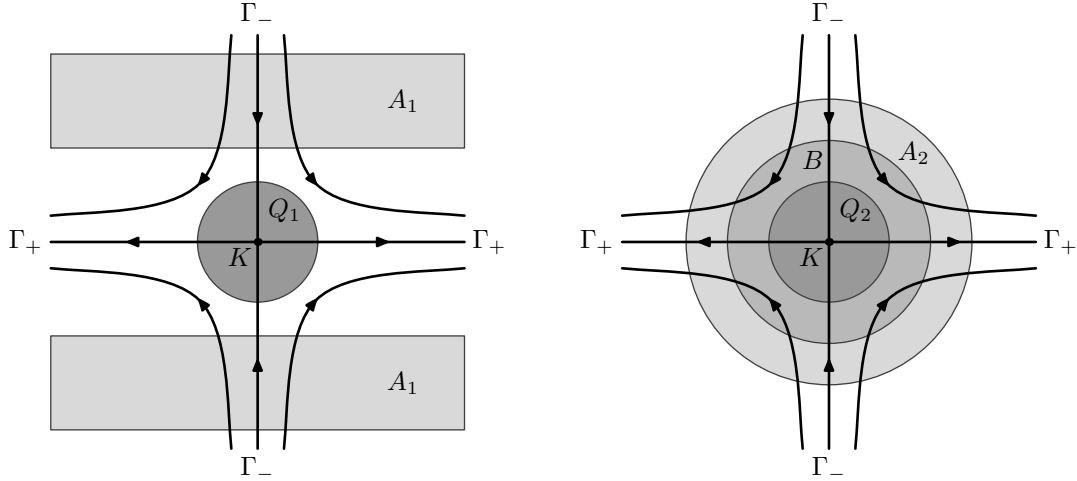

\includegraphics{ifwl.1}
\qquad
\includegraphics{ifwl.2}
\caption{An illustration of the flow $e^{tH_p}$ near $K$,
showing the wavefront
sets of the pseudodifferential operators involved
in the proofs of Lemma~\ref{l:basicpar-1} (left)
and Lemma~\ref{l:basicpar-2} (right).}
\label{f:basic}
\end{figure}
\begin{proof}
Fix a complex absorbing operator (see Figure~\ref{f:basic})
$$
\begin{aligned}
Q_1\in\Psi^{\comp}_h(M),&\quad
\sigma_h(Q_1)\geq 0;\\
K\cap \{|\xi|_g=1\}\subset\Ell_h(Q_1),&\quad
\WFh(Q_1)\subset \{r\leq r_0\}.
\end{aligned}
$$
We moreover require that $\WFh(Q_1)$ lies in a small enough
neighborhood of $K$ so that
\begin{equation}
  \label{e:basicpar-q1-dyn}
\WFh(Q_1)\cap \bigcup_{t\geq 0}e^{-tH_p}\big(\WFh(A_1)\cap \{|\xi|_g=1\}\big)=\emptyset.
\end{equation}
This is possible due to~\eqref{e:basicpar-cond}, since for each $t\geq 0$,
$e^{-tH_p}(\WFh(A_1)\cap \{|\xi|_g=1\})$ is a closed set not intersecting
$K$ and for $t$ large enough, this set lies in $\{r>r_0\}$.

The operator
\begin{equation}
  \label{e:basicpar-1-para}
\mathcal P_h(\omega)-iQ_1:\mathcal X\to\mathcal Y
\end{equation}
is invertible for $h$ small enough, and its inverse satisfies the
bound
\begin{equation}
  \label{e:basicpar-1-norm}
\|(\mathcal P_h(\omega)-iQ_1)^{-1}\|_{\mathcal Y\to\mathcal X}\leq Ch^{-1}.
\end{equation}
This follows from~\cite[Theorem~4.8]{Vasy-AH2}.
We briefly explain why this theorem applies in our case, referring
to~\cite[Theorem~5.33]{dizzy} for more details.
Consider
the rescaled Hamiltonian flow
\begin{equation}
  \label{e:Vasy-flow}
\exp(\pm t\langle \xi\rangle^{-1}H_{\tilde p}),\quad
\tilde p:=\Re\sigma_h(P_h(\omega))
\end{equation}
on the components $\Sigma_{h,\pm}\subset\overline T^*\Mext$ of the characteristic set $\{\langle\xi\rangle^{-2}\tilde p=0\}$
introduced in~\cite[\S3.4]{Vasy-AH2}.
Note that $\Sigma_{h,-}$ does not intersect $\overline T^*M$.
Then each flow line of~\eqref{e:Vasy-flow}
converges either to the radial sets $L_\pm$ or to $K$ as $t\to-\infty$;
in the latter case, this flow line lies in $\Ell_h(Q_1)$ for $-t\gg 1$.
Here we used~\cite[Lemma~4.1]{nhp}, \eqref{e:same-symbol}, and the structure of the flow~\eqref{e:Vasy-flow}
described for instance
in~\cite[Lemma~3.2]{Vasy-AH2} or~\cite[Lemma~4.4]{fwl} (see also~\cite[\S5.4]{dizzy}).
Similarly, as $t\to +\infty$ each flow line of~\eqref{e:Vasy-flow} on the characteristic set
outside of $L_\pm$ goes either to $\Ell_h(Q_1)$
or to the complex absorbing operator supported on $\Mext\setminus M$ which is part of $\mathcal P_h(\omega)$.
This means that $\mathcal P_h(\omega)-iQ_1$ satisfies the semiclassical nontrapping assumptions
described at the end of~\cite[\S3.5]{Vasy-AH2}, therefore~\cite[Theorem~4.8]{Vasy-AH2} applies.

From~\eqref{e:basicpar-q1-dyn} and~\eqref{e:same-symbol} we moreover see that each flow line of~\eqref{e:Vasy-flow}
on the characteristic set starting on $\WFh(A_1)$ does not intersect $\WFh(Q_1)$ for $t\leq 0$. Therefore,
by the semiclassically outgoing property of~\eqref{e:basicpar-1-para} (see~\cite[Theorem~4.9]{Vasy-AH2} and~\cite[Lemma~5.34]{dizzy})
we have
\begin{equation}
  \label{e:basicpar-semiout1}
A_1(\mathcal P_h(\omega)-iQ_1)^{-1}Q_1=\mathcal O(h^\infty)_{\mathcal D'\to C^\infty}.
\end{equation}
Here we used that $Q_1$ is bounded uniformly in $h$
as an operator $H^{-N}_h(\Mext)\to \mathcal Y$, for all $N$,
and the parameter $s$ in the definition of $\mathcal X$
can be chosen arbitrarily large.
Put
$$
Z_1(\omega):=A_1(\mathcal P_h(\omega)-iQ_1)^{-1}.
$$
Then the statement of the lemma follows from~\eqref{e:basicpar-1-norm}
and~\eqref{e:basicpar-1-para}, as
$$
A_1-Z_1(\omega)\mathcal P_h(\omega)=-iA_1(\mathcal P_h(\omega)-iQ_1)^{-1}Q_1.\qedhere
$$
\end{proof}
The next lemma in particular implies that each resonant state can be recovered
from its microlocal behavior in an $h$-independent neighborhood of $K\cap \{|\xi|_g=1\}$:
\begin{lemm}
  \label{l:basicpar-2}
Assume that $A_2\in\Psi^0_h(\Mext)$ is elliptic on $K\cap \{|\xi|_g=1\}$. Then
on $\mathcal X$,
\begin{equation}
  \label{e:basicpar-2}
1=Z_2(\omega)\mathcal P_h(\omega)+J_2(\omega)A_2+\mathcal O(h^\infty)_{\mathcal D'\to C^\infty}
\end{equation}
where
$Z_2(\omega),J_2(\omega)$
are holomorphic in $\omega\in\Omega$ and satisfy
$\|Z_2(\omega)\|_{\mathcal Y\to\mathcal X}\leq Ch^{-1}$, $\|J_2(\omega)\|_{H^{-N}_h\to H^N_h}\leq C_N$ for all $N$.
\end{lemm}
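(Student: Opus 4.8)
The plan is to run the argument of Lemma~\ref{l:basicpar-1} with a complex absorbing operator $Q_2$ whose wavefront set is, in addition, contained in the elliptic set of $A_2$, and then to factor $Q_2$ through $A_2$ by a microlocal elliptic parametrix. The only real difficulty will be the uniform bound on $J_2(\omega)$, for which the $\mathcal O(h^{-1})$ bound on the absorbing resolvent is one power of $h$ too weak.

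\emph{Construction of the identity.} Since $A_2$ is elliptic on the compact set $K\cap\{|\xi|_g=1\}$, the open set $\Ell_h(A_2)$ contains an $h$-independent neighborhood of it. Fix a complex absorbing operator $Q_2\in\Psi^{\comp}_h(M)$ with $\sigma_h(Q_2)\geq 0$, $K\cap\{|\xi|_g=1\}\subset\Ell_h(Q_2)$, and $\WFh(Q_2)$ contained in a small neighborhood of $K\cap\{|\xi|_g=1\}$ inside $\{r\leq r_0\}\cap\Ell_h(A_2)$. Exactly as in the proof of Lemma~\ref{l:basicpar-1} (no analogue of~\eqref{e:basicpar-q1-dyn} is needed here), the operator $\mathcal P_h(\omega)-iQ_2:\mathcal X\to\mathcal Y$ is invertible for small $h$, depends holomorphically on $\omega\in\Omega$, is semiclassically outgoing, and satisfies $\|(\mathcal P_h(\omega)-iQ_2)^{-1}\|_{\mathcal Y\to\mathcal X}\leq Ch^{-1}$. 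Put $Z_2(\omega):=(\mathcal P_h(\omega)-iQ_2)^{-1}$; then $\|Z_2(\omega)\|_{\mathcal Y\to\mathcal X}\leq Ch^{-1}$, and from $1=Z_2(\omega)(\mathcal P_h(\omega)-iQ_2)$ we obtain on $\mathcal X$
$$
1=Z_2(\omega)\mathcal P_h(\omega)-iZ_2(\omega)Q_2.
$$
Because $\WFh(Q_2)\subset\Ell_h(A_2)$, a microlocal elliptic parametrix provides $G\in\Psi^0_h(\Mext)$ with $GA_2=1$ microlocally near $\WFh(Q_2)$, hence $Q_2=(Q_2G)A_2+\mathcal O(h^\infty)_{\mathcal D'\to C^\infty}$, where $Q_2G\in\Psi^{\comp}_h(M)$ has $\WFh(Q_2G)\subset\WFh(Q_2)$, a small neighborhood of $K\cap\{|\xi|_g=1\}$. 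Setting $J_2(\omega):=-iZ_2(\omega)Q_2G$ then gives~\eqref{e:basicpar-2}, with $J_2$ holomorphic in $\omega$ since $Z_2$ is.

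\emph{The uniform bound on $J_2$.} It remains to prove $\|J_2(\omega)\|_{H^{-N}_h\to H^N_h}\leq C_N$. Combining $\|Z_2\|=\mathcal O(h^{-1})$ with the uniform boundedness of $Q_2G$ gives only $\mathcal O(h^{-1})$; the gain comes from the source produced by $Q_2G$ being microlocalized where the absorbing operator $Q_2$ is elliptic. Given $g\in H^{-N}_h$, put $f:=Q_2Gg$; then $\WFh(f)\subset\WFh(Q_2)$, $\|f\|_{H^M_h}\leq C_M\|g\|_{H^{-N}_h}$ for all $M$ (as $Q_2G$ is compactly microlocalized), and $u:=Z_2(\omega)f$ solves $(\mathcal P_h(\omega)-iQ_2)u=f$ with $J_2(\omega)g=-iu$. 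On $\Ell_h(Q_2)$ the operator $\mathcal P_h(\omega)-iQ_2$ is elliptic (near $\{r\leq r_0\}$ its principal symbol is $p^2-\omega^2-i\sigma_h(Q_2)$, which is nonvanishing there for $\omega\in\Omega$), so the elliptic estimate bounds $u$ microlocally near $\WFh(f)$ in every $H^N_h$ norm by $C_N\|f\|$, with no loss in $h$; the part of $\WFh(u)$ lying outside $\WFh(Q_2)$ is contained in the forward flowout of $\WFh(Q_2)$ under the flow~\eqref{e:Vasy-flow}, where $\mathcal P_h(\omega)u=f+iQ_2u$ is microlocally trivial, and propagating along this flowout to the radial sets $L_\pm$ and to the absorber supported on $\Mext\setminus M$, via the semiclassically outgoing property as in the proof of Lemma~\ref{l:basicpar-1}, bounds $u$ in $H^N_h(\Mext)$ by $C_N\|f\|$, again with no loss in $h$---the $h^{-1}$ in the bound on $\|Z_2\|$ is incurred only when a source must be propagated across a region where $\mathcal P_h(\omega)u\neq 0$, which does not occur here. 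Hence $\|u\|_{H^N_h(\Mext)}\leq C_N\|g\|_{H^{-N}_h}$. (Equivalently, this bound is the content of~\cite[Lemma~4.4]{hgap}, which the present lemma strengthens, applied to the operator $Q_2G$ microlocalized near $K\cap\{|\xi|_g=1\}$.)

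The step I expect to be the main obstacle is precisely this uniform bound: one must verify that post-composing the merely $\mathcal O(h^{-1})$-bounded absorbing resolvent $(\mathcal P_h(\omega)-iQ_2)^{-1}$ with an operator microlocalized inside $\Ell_h(Q_2)$ yields an operator bounded uniformly in $h$ into every space $H^N_h(\Mext)$. This is the same mechanism that makes $Q(\mathcal P-iQ)^{-1}$ uniformly bounded for a nontrapping complex absorber $Q$, but extracting it here requires careful bookkeeping of the absorption along the rescaled bicharacteristic flow near and past the trapped set, relying on the semiclassically outgoing property built into the construction of~\cite{Vasy-AH1,Vasy-AH2}.
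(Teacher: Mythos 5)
Your overall plan---invert $\mathcal P_h(\omega)-iQ_2$ and then factor $Q_2$ through $A_2$ by an elliptic parametrix $G$---produces a correct algebraic identity, and is a genuinely different route from the paper's. The paper does not bound $(\mathcal P_h(\omega)-iQ_2)^{-1}Q_2G$ directly; instead it introduces an auxiliary cutoff $B\in\Psi^{\comp}_h(M)$ with $\WFh(1-B)\cap\WFh(Q_2)=\emptyset$ and $\WFh(B)\subset\Ell_h(A_2)$, uses $Q_2(1-B)=\mathcal O(h^\infty)$ to write $(\mathcal P_h(\omega)-iQ_2)(1-B)=(1-B)\mathcal P_h(\omega)-[\mathcal P_h(\omega),B]+\mathcal O(h^\infty)$, and then factors both $B$ and the commutator $[\mathcal P_h(\omega),B]$ through $A_2$. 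Because $[\mathcal P_h(\omega),B]\in h\Psi^{\comp}_h(M)$, the extra factor of $h$ exactly cancels the $\mathcal O(h^{-1})$ bound on the absorbing resolvent, giving $J_2(\omega)=J'-h(\mathcal P_h(\omega)-iQ_2)^{-1}J''(\omega)$ with a uniform bound by pure bookkeeping.

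The step you flagged as the main obstacle is, in fact, where your argument has a genuine gap. You need $\|(\mathcal P_h(\omega)-iQ_2)^{-1}Q_2G\|_{H^{-N}_h\to H^N_h}\leq C_N$, and you argue via the elliptic estimate ``near $\WFh(f)$'' plus lossless propagation. But $f=Q_2Gg$ satisfies only $\WFh(f)\subset\WFh(Q_2)$, not $\WFh(f)\Subset\Ell_h(Q_2)$: the elliptic estimate for $\mathcal P_h(\omega)-iQ_2$ applies only on $\Ell_h(Q_2)$, and $\WFh(Q_2)\setminus\Ell_h(Q_2)$ is a nonempty ``fringe'' where $\sigma_h(Q_2)$ vanishes. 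On that fringe neither the elliptic estimate nor an $\mathcal O(h^\infty)$ source term is available; the standard propagation estimate there loses a factor $h^{-1}$, and indeed a one-dimensional model ($P=hD_x$, $Q$ a bump supported on $[-1,1]$ and bounded below only on $[-1/2,1/2]$, $f$ supported on $[-1,-1/2]$) shows $(\mathcal P-iQ)^{-1}f$ can be of size $h^{-1}\|f\|$ when $f$ sits in the fringe. So the claimed uniform bound does not follow from your sketch. It can likely be rescued by nesting two absorbers $Q_2,Q_2'$ with $\WFh(Q_2)\Subset\Ell_h(Q_2')$ (then $\WFh(Q_2G)$ is a compact subset of $\Ell_h(Q_2')$, the elliptic estimate covers it, and the lossless propagation runs on $\mathcal O(h^\infty)$ sources only), but that requires an argument not present in your writeup. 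The paper's commutator trick avoids the entire issue: no lossless propagation estimate is ever invoked, and the needed power of $h$ comes from $[\mathcal P_h(\omega),B]\in h\Psi^{\comp}_h$.
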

\begin{proof}
Fix a complex absorbing operator (see Figure~\ref{f:basic})
$$
\begin{aligned}
Q_2\in\Psi^{\comp}_h(M),&\quad
\sigma_h(Q_2)\geq 0;\\
K\cap \{|\xi|_g=1\}\subset\Ell_h(Q_2),&\quad
\WFh(Q_2)\subset \Ell_h(A_2).
\end{aligned}
$$
Take $B\in\Psi^{\comp}_h(M)$ such that
$$
\WFh(1-B)\cap\WFh(Q_2)=\emptyset,\quad
\WFh(B)\subset\Ell_h(A_2).
$$
Similarly to~\eqref{e:basicpar-1-norm}, we have for $h$ small enough,
$$
\|(\mathcal P_h(\omega)-iQ_2)^{-1}\|_{\mathcal Y\to\mathcal X}\leq Ch^{-1}.
$$
We next have
$$
(\mathcal P_h(\omega)-iQ_2)(1-B)=(1-B)\mathcal P_h(\omega)-[\mathcal P_h(\omega),B]+\mathcal O(h^\infty)_{\mathcal D'\to C^\infty}.
$$
Therefore,
$$
1=B+(\mathcal P_h(\omega)-iQ_2)^{-1}\big((1-B)\mathcal P_h(\omega)-[\mathcal P_h(\omega),B]\big)
+\mathcal O(h^\infty)_{\mathcal D'\to C^\infty}.
$$
Now, $[\mathcal P_h(\omega),B]\in h\Psi^{\comp}_h(M)$ and 
$\WFh([\mathcal P_h(\omega),B])\subset \WFh(B)\subset\Ell_h(A_2)$. Therefore,
by the elliptic parametrix construction~\cite[Proposition~E.31]{dizzy},
there exist $J',J''(\omega)\in\Psi^{\comp}_h(M)$ such that
$$
\begin{aligned}
B&=J'A_2+\mathcal O(h^\infty)_{\mathcal D'\to C^\infty},\\
[\mathcal P_h(\omega),B]&=hJ''(\omega)A_2+\mathcal O(h^\infty)_{\mathcal D'\to C^\infty}.
\end{aligned}
$$
It remains to put
$$
\begin{aligned}
Z_2(\omega)&=(\mathcal P_h(\omega)-iQ_2)^{-1}(1-B),\\
J_2(\omega)&=J'-h(\mathcal P_h(\omega)-iQ_2)^{-1}J''(\omega).\qedhere
\end{aligned}
$$
\end{proof}

\subsection{Bounded time propagation}

We next give an approximate inverse statement for operators
in classes $\Psi^{\comp}_{h,L,\rho}(T^*M\setminus 0)$, $L\in \{L_u,L_s\}$,
corresponding to propagation of singularities for a bounded time;
this is a strengthening of~\cite[Lemma~4.5]{hgap}.
The proof is an application of Egorov's theorem for the classes $\Psi^{\comp}_{h,L,\rho}$
\cite[Lemma~3.17]{hgap} together with the fundamental theorem of calculus.
This lemma is applied $\sim\log(1/h)$ times in the proof of
Propositions~\ref{l:gamma+parametrix} and~\ref{l:gamma-parametrix} below,
explaining the need for the precise norm bound~\eqref{e:J-omega-norm-bound}.
\begin{lemm}
  \label{l:finite-propagation}
Let $a,b\in S^{\comp}_{L,\rho}(T^*M\setminus 0)$ where
$L\in \{L_u,L_s\}$, $\rho\in [0,1)$, and fix $T>0$.
Assume that $|a|\leq 1$ everywhere and
\begin{equation}
  \label{e:finite-propagation-cond}
e^{-TH_p}(\supp a)\ \subset\ \{b=1\};\quad
e^{-tH_p}(\supp a)\ \subset\ W_0,\ t\in [0,T],
\end{equation}
where $W_0:=\{r\leq r_0\}\cap \{1/2\leq |\xi|_g\leq 2\}\subset T^*M\setminus 0$. Then
$$
\Op_h^L(a)=Z(\omega)\mathcal P_h(\omega)+J(\omega)\Op_h^L(b)+\mathcal O(h^\infty)_{\mathcal D'\to C^\infty}
$$
where $Z(\omega),J(\omega):\mathcal D'(\Mext)\to C^\infty(\Mext)$ are holomorphic in $\omega\in\Omega$
and for all $N$,
$$
\|Z(\omega)\|_{H^{-N}_h\to H^N_h}\leq C_Nh^{-1},\quad
\|J(\omega)\|_{H^{-N}_h\to H^N_h}\leq C_N,
$$
and for each $\varepsilon_1>0$ and $h$ small enough depending
on~$\varepsilon_1$,
\begin{equation}
  \label{e:J-omega-norm-bound}
\|J(\omega)\|_{L^2\to L^2}\leq \exp(-T\Im\omega/h)+\varepsilon_1.
\end{equation}
\end{lemm}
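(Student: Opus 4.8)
The plan is to prove Lemma~\ref{l:finite-propagation} by combining Egorov's theorem for the classes $\Psi^{\comp}_{h,L,\rho}$ with the fundamental theorem of calculus in the time variable, following the scheme of~\cite[Lemma~4.5]{hgap} but tracking the $L^2\to L^2$ norm of the propagator precisely. First I would introduce the conjugated operator $U(t)=e^{it\mathcal P_h(\omega)/h}$ (or, more precisely, a compactly microlocalized version of the propagator of the real part $\tilde p = p^2 - \Re\omega^2 \approx p^2 - 1$ near $W_0$, cut off so that it acts nicely on $H^{-N}_h\to H^N_h$), and observe that $\partial_t\big(U(-t)\Op_h^L(a_t)U(t)\big)$ can be computed where $a_t$ is a suitable family of symbols to be chosen; the point of Egorov's theorem~\cite[Lemma~3.17]{hgap} is that $U(-t)\Op_h^L(a)U(t)$ is, up to $\mathcal O(h^\infty)_{\mathcal D'\to C^\infty}$, again a quantization $\Op_h^L(a\circ e^{-tH_p} + h\cdots)$ in the same class, using that the flow $e^{tH_p}$ preserves the Lagrangian foliation $L$ (this is where $L\in\{L_u,L_s\}$ and the foliation-compatibility of the geodesic flow enter), and that the time $T$ is bounded independent of $h$ so only finitely many symbol corrections accumulate. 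The hypothesis $e^{-tH_p}(\supp a)\subset W_0$ for $t\in[0,T]$ guarantees that the whole construction stays in the region $\{r\le r_0\}\cap\{1/2\le|\xi|_g\le 2\}$ where $\sigma_h(\mathcal P_h(\omega)) = p^2-\omega^2$ by~\eqref{e:same-symbol}, so that the propagator is genuinely the geodesic flow quantization and no contribution from the infinity of $M$ or from the complex absorbing operators appears.

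Next I would write the telescoping identity
\begin{equation*}
\Op_h^L(a) = U(-T)\Op_h^L(a\circ e^{-TH_p})U(T) - \int_0^T \partial_t\Big(U(-t)\Op_h^L(a\circ e^{-tH_p})U(t)\Big)\,dt + \mathcal O(h^\infty)_{\mathcal D'\to C^\infty},
\end{equation*}
where the integrand equals $U(-t)\big(h^{-1}[\text{commutator terms}] + \text{flow derivative of symbol}\big)U(t)$. By construction the flow-derivative of the leading symbol cancels against the classical part of the commutator (this is exactly the Egorov cancellation), so the integrand is $U(-t)\,h^{-1}\big(\text{something of the form }\mathcal P_h(\omega)\circ(\ldots)\big)U(t)$ plus lower order; collecting these gives the $Z(\omega)\mathcal P_h(\omega)$ term with the stated $\mathcal O(h^{-1})$ bound on $Z(\omega)$ (the single power of $h^{-1}$ comes from the $h^{-1}$ in front of the commutator, and the bounded time integration does not produce extra powers). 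The boundary term at $t=T$ uses the hypothesis $e^{-TH_p}(\supp a)\subset\{b=1\}$: there $a\circ e^{-TH_p}$ can be written as $(a\circ e^{-TH_p})\cdot b + \mathcal O(h^\infty)$ at the symbol level, hence $\Op_h^L(a\circ e^{-TH_p}) = \Op_h^L(a\circ e^{-TH_p})\Op_h^L(b) + \mathcal O(h^\infty)_{\mathcal D'\to C^\infty}$ by the product formula for the $L$-calculus, and conjugating back by $U(\pm T)$ and absorbing the propagators into $J(\omega)$ gives the term $J(\omega)\Op_h^L(b)$.

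It remains to establish the norm bound~\eqref{e:J-omega-norm-bound}. The operator $J(\omega)$ is, up to $\mathcal O(h^\infty)$ and up to compactly supported pseudodifferential factors that are bounded by $1 + o(1)$ in $L^2$, equal to $U(-T) = e^{-iT\mathcal P_h(\omega)/h}$ restricted microlocally to $W_0$; on that region $\mathcal P_h(\omega) = \psi_2(-h^2\Delta_g - h^2(n-1)^2/4 - \omega^2)\psi_1$ with $\psi_1=\psi_2=1$, and writing $\omega = \Re\omega + i\Im\omega$ one sees $\|e^{-iT\mathcal P_h(\omega)/h}\|_{L^2\to L^2}$ behaves like $e^{-T\Im\omega/h}$ up to subleading corrections — more carefully, one uses an energy estimate / Duhamel argument bounding $\|U(-t)v\|_{L^2}$ via $\partial_t\|U(-t)v\|^2_{L^2} \le (2\Im\omega/h + \mathcal O(h^{\gamma}))\|U(-t)v\|^2_{L^2}$ from the (anti)self-adjointness of the principal part, integrating to get $\|U(-T)\|_{L^2\to L^2}\le e^{-T\Im\omega/h}(1+o(1))$, which for $h$ small depending on $\varepsilon_1$ is $\le e^{-T\Im\omega/h}+\varepsilon_1$. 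I expect the main obstacle to be precisely this last step: keeping the $L^2$ operator norm of the bounded-time propagator sharp (no stray $h^{-C}$ or even $C$ constants in front of $e^{-T\Im\omega/h}$), since this lemma is iterated $\sim\log(1/h)$ times and any multiplicative slack would compound to a power of $h$. Handling this requires care in how the various compactly microlocalized cutoffs are inserted — they must be chosen so that their product is $1 + \mathcal O(h^\infty)$ on the relevant flowout of $\supp a$ rather than merely bounded — and in extracting the energy estimate from the non-self-adjoint operator $\mathcal P_h(\omega)$ whose skew-adjoint part is $\mathcal O(\Im\omega) = \mathcal O(h)$ while its remaining lower-order terms must be shown not to contribute at leading order.
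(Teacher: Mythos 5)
Your high-level plan (Egorov plus fundamental theorem of calculus, as in \cite[Lemma~4.5]{hgap}) is the right one, and you correctly flag that the delicate point is getting the $L^2$ operator norm of the propagator sharp to within $1+o(1)$. However, the way you propose to set up the propagator and the FTC identity misses the paper's key device, and I believe the argument as written would not close.

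The paper does not use the propagator of $\mathcal P_h(\omega)$ or of its ``real part.'' Instead it extracts a first-order self-adjoint square root: there is a compactly microlocalized $P=P^*\in\Psi^{\comp}_h(M)$ with $\sigma_h(P)=p=|\xi|_g$ near $W_0$ such that $\mathcal P_h(\omega)=P^2-\omega^2=(P+\omega)(P-\omega)$ microlocally near $W_0$ (this is~\eqref{e:omega-resolved}). One then works with the propagator $e^{-it(P-\omega)/h}$, not $e^{it\mathcal P_h(\omega)/h}$. This matters twice over. First, the flow generated by $\sigma_h(P)=p$ is exactly $H_p$, whereas $H_{p^2-\omega^2}=2pH_p$ flows at speed $2$ on $\{p=1\}$, so your geodesic-time parameterization is off by a factor of~$2$. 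Second, and more importantly, since $P$ is genuinely self-adjoint, $e^{-itP/h}$ is unitary on $L^2(\Mext)$ and $\|e^{-iT(P-\omega)/h}\|_{L^2\to L^2}$ equals $e^{-T\Im\omega/h}$ \emph{exactly}, no Duhamel or energy estimate, no lower-order terms to control. By contrast, the imaginary part of the symbol of $\mathcal P_h(\omega)$ near the characteristic set is $\approx -2\Im\omega$ (from $\omega^2-(\Re\omega)^2\approx 2i\Im\omega$), so a naive energy estimate on your $U(t)$ yields $e^{-2T\Im\omega/h}$ rather than $e^{-T\Im\omega/h}$; untangling this and controlling the $\mathcal O(h)$-sized symbol errors so that no multiplicative constant appears (essential because the lemma is iterated $\sim\log(1/h)$ times) is exactly the difficulty you name, and the square-root factorization is the step that makes it disappear.

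The second gap is in how you plan to extract $Z(\omega)\mathcal P_h(\omega)$. Your telescoping identity differentiates the \emph{conjugation} $U(-t)\Op_h^L(a\circ e^{-tH_p})U(t)$. With the full Egorov family $A_t$ (as in \cite[Lemma~3.17]{hgap}), that derivative is $\mathcal O(h^\infty)$ and produces no integral term at all; with only the principal symbol, the derivative is $\mathcal O(h)$ lower-order stuff, but it is a commutator error, not anything of the form $\mathcal P_h(\omega)\circ(\ldots)$. The paper instead differentiates the \emph{one-sided} composition $\Op_h^L(a)e^{-it(P-\omega)/h}$; this puts the factor $(P-\omega)$ on the right by design, giving $\Op_h^L(a)=e^{-iT(P-\omega)/h}A_T+\frac{i}{h}\int_0^T e^{-it(P-\omega)/h}A_t(P-\omega)\,dt+\mathcal O(h^\infty)$. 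One then inverts the elliptic factor: $S(\omega)(P+\omega)=1+\mathcal O(h^\infty)$ near $W_0$, so $A_t(P-\omega)=A_tS(\omega)\mathcal P_h(\omega)+\mathcal O(h^\infty)$, and the integral becomes $Z(\omega)\mathcal P_h(\omega)$ with $\|Z(\omega)\|\leq Ch^{-1}$ from the explicit $\frac{i}{h}$ prefactor. Without the factorization $(P+\omega)(P-\omega)$ and the elliptic parametrix for $P+\omega$, I do not see how your integrand acquires a right factor of $\mathcal P_h(\omega)$.

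Your treatment of the boundary term (writing $A_T=J'\Op_h^L(b)+\mathcal O(h^\infty)$ from the support condition and the $L$-calculus product formula, then bounding $\|J'\|_{L^2\to L^2}\leq 1+\varepsilon_2$ from $|a|\leq 1$) is in agreement with the paper, and your remark on why the hypothesis $e^{-tH_p}(\supp a)\subset W_0$ keeps everything in the region where $\sigma_h(\mathcal P_h(\omega))=p^2-\omega^2$ is also correct.
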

\begin{proof}
Let $P\in\Psi^{\comp}_h(M)\subset\Psi^{\comp}_h(\Mext)$, $P^*=P$,
be the operator constructed in~\cite[(4.22)]{hgap}; then by~\eqref{e:vasy-operator} and~\eqref{e:conjugators},
\begin{equation}
  \label{e:omega-resolved}
\begin{aligned}
\mathcal P_h(\omega)&=P^2-\omega^2\quad\text{microlocally near }W_0,\\
\sigma_h(P)&=p=|\xi|_g\quad\text{near }W_0.
\end{aligned}
\end{equation}
We have $P^2-\omega^2=(P+\omega)(P-\omega)$ and $\sigma_h(P+\omega)=p+1>0$
near $W_0$, for $\omega\in\Omega$.
By the elliptic parametrix construction~\cite[Proposition~E.31]{dizzy},
there exists a family of operators holomorphic in $\omega\in\Omega$,
\begin{equation}
  \label{e:s-operator}
S(\omega)\in\Psi^{\comp}_h(M),\quad
S(\omega)(P+\omega)=1+\mathcal O(h^\infty)\quad\text{microlocally near }W_0.
\end{equation}
By~\cite[Lemma~3.17]{hgap} and the second part of~\eqref{e:finite-propagation-cond}, there exists a family of operators
$$
A_t\in\Psi^{\comp}_{h,L,\rho}(T^*M\setminus 0),\quad
t\in [0,T],\quad
A_0=\Op^L_h(a)+\mathcal O(h^\infty)_{\mathcal D'\to C^\infty},
$$
with principal symbols $\sigma_h^L(A_t)=a\circ e^{tH_p}+\mathcal O(h^{1-\rho})_{S^{\comp}_{L,\rho}(T^*M\setminus 0)}$ and
$$
ih\partial_t A_t+[P,A_t]=\mathcal O(h^\infty)_{\mathcal D'\to C^\infty}.
$$
Let $e^{-itP/h}$ be the Schr\"odinger propagator
associated to the compactly microlocalized self-adjoint operator $P$; it is
a unitary operator on $L^2(\Mext)$
and $e^{-itP/h}-1$ is compactly microlocalized.
Then
\begin{equation}
  \label{e:basic-propagated}
A_t=e^{itP/h}\Op_h^L(a)e^{-itP/h}+\mathcal O(h^\infty)_{\mathcal D'\to C^\infty},\quad
t\in [0,T],
\end{equation}
as can be seen by differentiating $e^{-itP/h}A_te^{itP/h}$ in $t$.
Applying the fundamental theorem of calculus to
$$
\Op_h^L(a)e^{-it(P-\omega)/h}=e^{-it(P-\omega)/h}A_t+\mathcal O(h^\infty)_{\mathcal D'\to C^\infty}
$$
on the interval $[0,T]$, we get
\begin{equation}
  \label{e:baspro-1}
\Op_h^L(a)=e^{-iT(P-\omega)/h}A_T
+{i\over h}\int_0^T e^{-it(P-\omega)/h}A_t(P-\omega)\,dt
+\mathcal O(h^\infty)_{\mathcal D'\to C^\infty}.
\end{equation}
Since the wavefront set of $e^{-itP/h}$ lies in the graph of $\exp(tH_{\sigma_h(P)})$, we have by~\eqref{e:basic-propagated}
$$
\WFh(A_t)\ \subset\ \exp(-tH_{\sigma_h(P)})(\WFh(A_0))\ \subset\ W_0,\quad
t\in [0,T].
$$
By~\eqref{e:omega-resolved} and~\eqref{e:s-operator}, we have
\begin{equation}
  \label{e:baspro-2}
A_t(P-\omega)=A_tS(\omega)\mathcal P_h(\omega)+\mathcal O(h^\infty)_{\mathcal D'\to C^\infty},\quad
t\in [0,T].
\end{equation}
On the other hand, using~\cite[Lemma~3.16]{hgap} and the first part of~\eqref{e:finite-propagation-cond}
as in the proof of~\cite[Lemma~4.5]{hgap}, we write
\begin{equation}
  \label{e:baspro-3}
A_T=J'\Op_h^L(b)+\mathcal O(h^\infty)_{\mathcal D'\to C^\infty},\quad
J'\in\Psi^{\comp}_{h,L,\rho}(T^*M\setminus 0),
\end{equation}
and the principal symbol $\sigma^L_h(J')$ is equal to $a\circ e^{TH_p}+\mathcal O(h^{1-\rho})$.
Since $|a|\leq 1$ everywhere, by~\cite[Lemma~3.15]{hgap} we have for each $\varepsilon_2>0$
and $h$ small enough,
\begin{equation}
  \label{e:baspro-4}
\|J'\|_{L^2\to L^2}\leq 1+\varepsilon_2.
\end{equation}
It remains to put 
$$
\begin{aligned}
Z(\omega)&={i\over h}\int_0^T e^{-it(P-\omega)/h}A_tS(\omega)\,dt,\\
J(\omega)&=e^{-iT(P-\omega)/h}J',
\end{aligned}
$$
and use~\eqref{e:baspro-1}--\eqref{e:baspro-4} and the fact
that $\|e^{-iT(P-\omega)/h}\|_{L^2\to L^2}=\exp(-T\Im\omega/h)$.
\end{proof}
We also give a version of Lemma~\ref{l:basicpar-1} which applies
to operators in $\Psi^{\comp}_{h,L,\rho}$:
\begin{lemm}
  \label{l:basicpar-improved}
Assume that $U\subset T^*M$ is an open set, $L$ is a Lagrangian
foliation on~$U$, $\rho\in [0,1)$, and $a\in S^{\comp}_{L,\rho}(U)$ satisfies
$\supp a\subset V$, where
$$
V \subset\ U\cap \{r<r_0\}\setminus\big(\Gamma_+\cap \{|\xi|_g=1\}\big)
$$
is an $h$-independent compact subset.
Then we have on $\mathcal X$
$$
\Op_h^L(a)=Z(\omega)\mathcal P_h(\omega)+\mathcal O(h^\infty)_{\mathcal D'\to C^\infty},
$$
where $Z(\omega)$ is holomorphic in $\omega\in\Omega$
and $\|Z(\omega)\|_{\mathcal Y\to\mathcal X}\leq Ch^{-1}$.
\end{lemm}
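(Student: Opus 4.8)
The plan is to reduce this to Lemma~\ref{l:basicpar-1} after first factoring off the exotic part of $\Op_h^L(a)$ against an ordinary pseudodifferential operator. Since $\supp a\subset V$ and $V$ is an $h$-independent compact subset of the open set $U\cap\{r<r_0\}\setminus(\Gamma_+\cap\{|\xi|_g=1\})$ (which is open because $\Gamma_+\cap\{|\xi|_g=1\}$ is closed; alternatively one works inside a slab $\{r\le r_0\}\cap\{c_0\le|\xi|_g\le c_0^{-1}\}$ containing $V$, on which this set is compact), I would fix an operator $A_1\in\Psi^{\comp}_h(M)\subset\Psi^0_h(\Mext)$ with $\sigma_h(A_1)=1$ near $\supp a$ and with $\WFh(A_1)$ contained in a compact subset of $U\cap\{r<r_0\}$ disjoint from $\Gamma_+\cap\{|\xi|_g=1\}$. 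By \cite[Lemma~3.12]{hgap} the operator $\Op_h^L(a)$ is pseudolocal and compactly microlocalized, with $\WF'_h(\Op_h^L(a))$ a compact subset of the diagonal over $\supp a$, so $A_1$ is microlocally the identity near $\WF'_h(\Op_h^L(a))$. The composition rules relating $\Psi^{\comp}_{h,L,\rho}(U)$ to standard pseudodifferential operators, in the form of \cite[Lemma~3.16]{hgap}, then give
\[
\Op_h^L(a)=\Op_h^L(a)\,A_1+\mathcal O(h^\infty)_{\mathcal D'\to C^\infty}.
\]

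Next I would apply Lemma~\ref{l:basicpar-1} to $A_1$: its hypotheses $\WFh(A_1)\subset\{r\le r_0\}$ and $\WFh(A_1)\cap\Gamma_+\cap\{|\xi|_g=1\}=\emptyset$ hold by construction, so $A_1=Z_1(\omega)\mathcal P_h(\omega)+\mathcal O(h^\infty)_{\mathcal D'\to C^\infty}$ with $Z_1(\omega)$ holomorphic in $\omega\in\Omega$ and $\|Z_1(\omega)\|_{\mathcal Y\to\mathcal X}\le Ch^{-1}$. Setting $Z(\omega):=\Op_h^L(a)\,Z_1(\omega)$, which is again holomorphic in $\omega$, and combining the two identities gives
\[
\Op_h^L(a)=\Op_h^L(a)\,Z_1(\omega)\mathcal P_h(\omega)+\mathcal O(h^\infty)_{\mathcal D'\to C^\infty}=Z(\omega)\mathcal P_h(\omega)+\mathcal O(h^\infty)_{\mathcal D'\to C^\infty}.
\]
The $\mathcal O(h^\infty)_{\mathcal D'\to C^\infty}$ remainders are preserved under left multiplication by $\Op_h^L(a)$ because, by \cite[Lemma~3.12]{hgap}, $\Op_h^L(a)$ is bounded $H^{-N}_h(\Mext)\to H^N_h(\Mext)$ for all $N$. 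The same mapping property, together with \eqref{e:Y-Y} and the boundedness of $\mathcal P_h(1)\in\Psi^2_h(\Mext)$ between semiclassical Sobolev spaces, shows via $\mathcal X\hookrightarrow H^s_h\xrightarrow{\Op_h^L(a)}H^{s+1}_h\hookrightarrow\mathcal X$ that $\Op_h^L(a)$ is bounded $\mathcal X\to\mathcal X$ uniformly in $h$, whence $\|Z(\omega)\|_{\mathcal Y\to\mathcal X}\le C\|Z_1(\omega)\|_{\mathcal Y\to\mathcal X}\le Ch^{-1}$, as required.

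The one step that needs care is the first display: one must verify that multiplying $\Op_h^L(a)$ on the right by a standard operator which is microlocally the identity on $\WF'_h(\Op_h^L(a))$ reproduces $\Op_h^L(a)$ to infinite order in $h$ within the calculus of \cite[\S3]{hgap}. This is routine: in the asymptotic expansion of the full symbol of $\Op_h^L(a)\,A_1$ the leading term is $a\,\sigma_h(A_1)=a$, while every further term carries a derivative of $\sigma_h(A_1)$, which vanishes on a neighborhood of $\supp a$, paired with a derivative of $a$, which vanishes off $\supp a$; hence all corrections contribute only $\mathcal O(h^\infty)_{\mathcal D'\to C^\infty}$, and the $h^{-\rho}$ growth of the transversal derivatives of $a$ plays no role. (Alternatively, one can simply repeat the proof of Lemma~\ref{l:basicpar-1} verbatim with $\Op_h^L(a)$ in place of $A_1$, choosing the complex absorbing operator $Q_1$ so that $\bigcup_{t\ge0}e^{-tH_p}(\supp a\cap\{|\xi|_g=1\})$ avoids $\WFh(Q_1)$, which is possible precisely because $\supp a$ misses $\Gamma_+\cap\{|\xi|_g=1\}$.) The remaining bookkeeping — closedness of $\Gamma_+\cap\{|\xi|_g=1\}$ near $\supp a$, holomorphy in $\omega$, and tracking of remainders — is immediate.
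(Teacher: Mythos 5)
Your proof is correct and follows essentially the same route as the paper's. The only difference is in how the standard pseudodifferential factor is peeled off: the paper invokes \cite[Lemma~3.16]{hgap} to write $\Op_h^L(a)=J'\Op_h^L(b)+\mathcal O(h^\infty)_{\mathcal D'\to C^\infty}$ with $b$ an $h$-independent function equal to $1$ near $V$ and $J'\in\Psi^{\comp}_{h,L,\rho}(U)$, then applies Lemma~\ref{l:basicpar-1} to the \emph{right} factor $\Op_h^L(b)\in\Psi^{\comp}_h(M)$ and sets $Z(\omega)=J'Z'(\omega)$; you instead leave $\Op_h^L(a)$ in place, multiply on the right by a microlocal cutoff $A_1\in\Psi^{\comp}_h(M)$ with $\sigma_h(A_1)=1$ near $\supp a$, and apply Lemma~\ref{l:basicpar-1} to $A_1$. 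Both decompositions have the same shape (exotic factor times standard factor with wavefront set missing $\Gamma_+\cap\{|\xi|_g=1\}$), both need $\Op_h^L(a)$ (respectively $J'$) bounded uniformly on $\mathcal X$ for the norm bound, and both land on the same application of Lemma~\ref{l:basicpar-1}, so the two arguments are essentially identical. One minor remark: Lemma~3.16 of~\cite{hgap}, as used in the paper, produces the factorization $\Op_h^L(a)=J'\Op_h^L(b)$ rather than the identity $\Op_h^L(a)=\Op_h^L(a)A_1+\mathcal O(h^\infty)$; the latter is not literally that lemma, but it does follow from pseudolocality of the $\Psi^{\comp}_{h,L,\rho}$ calculus as you go on to argue directly, so the gap is only in the citation, not in the mathematics.
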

\begin{proof}
Consider an $h$-independent function
$$
b\in C_0^\infty(U\cap \{r<r_0\}),\quad
\supp b\cap \Gamma_+\cap\{|\xi|_g=1\}=\emptyset,\quad
b=1\text{ near }V.
$$
Then by~\cite[Lemma~3.16]{hgap}, there exists $J'\in\Psi^{\comp}_{h,L,\rho}(U)$ such that
$$
\Op_h^L(a)=J'\Op_h^L(b)+\mathcal O(h^\infty)_{\mathcal D'\to C^\infty}.
$$
Now, $\Op_h^L(b)\in\Psi^{\comp}_h(M)$, therefore by Lemma~\ref{l:basicpar-1}
there exists $Z'(\omega)$ holomorphic in $\omega\in\Omega$,
with $\|Z'(\omega)\|_{\mathcal Y\to\mathcal X}\leq Ch^{-1}$ and
$$
\Op_h^L(b)=Z'(\omega)\mathcal P_h(\omega)+\mathcal O(h^\infty)_{\mathcal D'\to C^\infty}.
$$
It remains to put
$$
Z(\omega):=J'Z'(\omega).
\qedhere
$$
\end{proof}

\subsection{Long time propagation}

We now iterate Lemma~\ref{l:finite-propagation} to obtain
two statements corresponding to propagation
for time up to $\rho\log(1/h)$, which is almost twice the Ehrenfest time.

We start with the following strengthening of~\cite[(4.25)]{hgap}.
It is a refinement of Lemma~\ref{l:basicpar-1} since
the support of the symbol $\chi(1-\chi\circ e^{-tH_p})$ may come
$h^\rho$ close to $\Gamma_+$.
\begin{lemm}
  \label{l:gamma+parametrix}
Fix $\chi$ satisfying~\eqref{e:chi-selection}, $\rho\in [0,1)$, and $\varepsilon_0>0$. Then there
exists $T>0$ such that uniformly in $t\in [T,\rho\log(1/h)]$ and $\omega\in\Omega$,
$$
\Op_h^{L_u}\big(\chi(1-\chi\circ e^{-tH_p})\big)=Z_+(\omega,t)\mathcal P_h(\omega)+\mathcal O(h^\infty)_{\mathcal D'\to C^\infty}.
$$
Here $\chi(1-\chi\circ e^{-tH_p})\in S^{\comp}_{L_u,\rho}(T^*M\setminus 0)$ by~\cite[Lemma~4.2]{hgap}.
The operator $Z_+(\omega,t)$ is holomorphic in $\omega\in\Omega$ and satisfies
(with $\beta_0$ defined in~\eqref{e:Omega})
$$
\|Z_+(\omega,t)\|_{\mathcal Y\to\mathcal X}\leq Ch^{-1}\exp\big((\beta_0+\varepsilon_0)t\big),\quad
t\in [T,\rho\log(1/h)].
$$
\end{lemm}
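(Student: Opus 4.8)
The plan is to derive this long-time statement by iterating the bounded-time propagation Lemma~\ref{l:finite-propagation} roughly $t/T$ times, starting from the symbol $a_t:=\chi(1-\chi\circ e^{-tH_p})$ and terminating the recursion with the $h$-independent estimate of Lemma~\ref{l:basicpar-improved}, together with Lemma~\ref{l:basicpar-1} for the portions of the support that have escaped towards $\{r>r_0\}$. The exponential weight $\exp((\beta_0+\varepsilon_0)t)$ is forced by this structure: each application of Lemma~\ref{l:finite-propagation} introduces a factor $\|J(\omega)\|_{L^2\to L^2}\le\exp(-T\Im\omega/h)+\varepsilon_1\le\exp(\beta_0T)+\varepsilon_1$, coming from the Schr\"odinger propagator $e^{-iT(P-\omega)/h}$ and the bound $\Im\omega\ge-h\beta_0$ on $\Omega$ from~\eqref{e:Omega}, and over $\lesssim t/T$ steps these multiply; choosing $\varepsilon_1$ small enough (a fixed positive amount since $\varepsilon_0>0$) makes the product $\le\exp(({\beta_0+{\varepsilon_0\over 2}})t)$, and the $\mathcal O(t/T)=\mathcal O(\log(1/h))$ resulting terms are absorbed into $\exp({\varepsilon_0\over 2}t)$ because $t\ge T$.

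First I would fix the parameters. Since $\Gamma_+$ is foliated by the leaves of the weak unstable foliation $L_u$, every point of $\Gamma_+\cap\{|\xi|_g=1\}$ converges to $K\cap\{|\xi|_g=1\}$ under the backward flow $e^{-sH_p}$; so, fixing a small neighborhood $V_0$ of $K\cap\{|\xi|_g=1\}$ inside $\{\chi=1\}$ and a fixed compact set $W_0'\subset W_0$ containing $\supp\chi$, I can choose $T>0$ such that $e^{-\sigma H_p}$ maps a fixed neighborhood of $\Gamma_+\cap\{|\xi|_g=1\}$ into $V_0$ for $\sigma\in[T,3T]$, and then (enlarging $r_0$, which~\eqref{e:conjugators} permits) also $e^{-\sigma H_p}(W_0')\subset W_0$ for $\sigma\in[0,T]$. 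I also fix a nested family of cutoffs in $C_0^\infty(W_0';[0,1])$, each equal to $1$ on $V_0$. The key reduction step is then: given $a=\psi(1-\varphi\circ e^{-\tau H_p})$ with $\psi\in C_0^\infty(W_0';[0,1])$, $\varphi$ in the fixed family, and $\tau\in[T,\rho\log(1/h)]$ — so $a\in S^{\comp}_{L_u,\rho}$ with $|a|\le1$, the family being admissible because its derivatives grow only logarithmically in $h$ — pick $b=\psi'(1-\varphi'\circ e^{-(\tau-T)H_p})$, where $\psi'=1$ on $e^{-TH_p}(\supp\psi)\cap W_0'$ and $\varphi'$ is the next member of the family, so $\supp\varphi'\subset\{\varphi=1\}$. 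Because $\supp\varphi'\subset\{\varphi=1\}$, one gets $1-\varphi'\circ e^{-(\tau-T)H_p}=1$ on $\{\varphi\circ e^{-(\tau-T)H_p}<1\}\supset e^{-TH_p}(\supp a)$, hence $b=1$ on $e^{-TH_p}(\supp a)\cap W_0'$; the remaining part of $e^{-TH_p}(\supp a)$ lies in $W_0\setminus W_0'$, where the backward flow escapes, and is disposed of by (the proof of) Lemma~\ref{l:basicpar-1}, contributing $Z''\mathcal P_h(\omega)+\mathcal O(h^\infty)$ with $\|Z''\|_{\mathcal Y\to\mathcal X}\le Ch^{-1}$. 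Lemma~\ref{l:finite-propagation}, applied to the part supported in $W_0'$, then gives $\Op_h^{L_u}(a)=Z\mathcal P_h(\omega)+J\Op_h^{L_u}(b)+\mathcal O(h^\infty)$ with $\|Z\|\le Ch^{-1}$ and $\|J\|_{L^2\to L^2}\le\exp(\beta_0T)+\varepsilon_1$, $\|J\|_{H^{-N}_h\to H^N_h}\le C_N$.

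Iterating this from $a_t=\chi(1-\chi\circ e^{-tH_p})$ lowers $\tau$ by $T$ at each step until $\tau\in[T,2T)$. At that point, by the choice of $T$, $\varphi\circ e^{-\tau H_p}=1$ near $\Gamma_+\cap\{|\xi|_g=1\}$, so the symbol is $h$-independent, supported in $W_0'\subset\{r<r_0\}$ and away from $\Gamma_+\cap\{|\xi|_g=1\}$, and Lemma~\ref{l:basicpar-improved} terminates the recursion. Collecting all the terms and using~\eqref{e:Y-Y} — together with the fact that every operator involved is compactly microlocalized in the fixed region $W_0$, so on the relevant subspaces the semiclassical Sobolev norms are comparable to $L^2$ with $h$-independent constants and the $\|J\|_{L^2\to L^2}$ bounds multiply through the products — produces an operator $Z_+(\omega,t)$, holomorphic in $\omega\in\Omega$ (each ingredient being holomorphic and the combination a finite sum of compositions), with $\|Z_+(\omega,t)\|_{\mathcal Y\to\mathcal X}\le Ch^{-1}\exp((\beta_0+\varepsilon_0)t)$; the total remainder stays $\mathcal O(h^\infty)_{\mathcal D'\to C^\infty}$ since only $\mathcal O(\log(1/h))$ of them are summed and each is $\mathcal O(h^\infty)$.

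The main obstacle is this cutoff bookkeeping: choosing the nested inner family so that it remains in $S^{\comp}_{L_u,\rho}$ uniformly while every member is $1$ on the common core $V_0$; choosing the outer cutoffs $\psi'$ and the buffer $W_0\setminus W_0'$ so that all intermediate supports $e^{-\sigma H_p}(\supp\cdot)$, $\sigma\in[0,T]$, remain in $W_0$ and Lemma~\ref{l:finite-propagation} applies; and verifying that the portions pushed into $W_0\setminus W_0'$ genuinely escape under the backward flow, so that Lemma~\ref{l:basicpar-1} handles them with a $Ch^{-1}$ bound. These facts follow from the dynamics of $\chi\circ e^{-\sigma H_p}$ and of neighborhoods of $\Gamma_\pm$ recorded in~\cite[Lemmas~4.1--4.3]{hgap}, and are the only genuinely technical point; granting them, the norm accounting above is routine.
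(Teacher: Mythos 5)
Your overall strategy matches the paper's: iterate Lemma~\ref{l:finite-propagation} about $t/T$ times, track the exponential factor $\exp(-T\Im\omega/h)+\varepsilon_1$ per step, dispose of the escaping part with Lemma~\ref{l:basicpar-1}/\ref{l:basicpar-improved}, and terminate the recursion once the symbol is supported away from $\Gamma_+\cap\{|\xi|_g=1\}$. However, the mechanism you propose for the recursive step — a nested family of inner cutoffs $\varphi_j$ with $\supp\varphi_{j+1}\subset\{\varphi_j=1\}$, all equal to $1$ on a fixed core $V_0$ — has a genuine gap, and it is precisely the point you yourself flag as ``the main obstacle.'' Since the recursion has $\sim\rho\log(1/h)/T$ steps, you need $\mathcal O(\log(1/h))$ such cutoffs. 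But if each $\varphi_j$ had derivatives bounded uniformly in $j,h$, the shell between $\supp\varphi_{j+1}$ and $\{\varphi_j=1\}$ would have thickness bounded below, so the diameters of $\supp\varphi_j$ would shrink by a fixed amount per step and could not all contain $V_0$ after $\mathcal O(\log(1/h))$ steps. Thus the derivatives of the $\varphi_j$ must grow at least like $\log(1/h)$, and then $\varphi_j\circ e^{-\tau H_p}$ acquires $L_u$-tangential derivatives of size $\sim\log(1/h)$. This violates the defining condition~\eqref{e:funny-symbols} of $S^{\comp}_{L_u,\rho}$, which requires the $Y$-derivatives (along $L$) to be $\mathcal O(1)$ with an $h$-independent constant; it is not merely a matter of ``growing logarithmically, hence admissible.'' Citing~\cite[Lemmas~4.1--4.3]{hgap} does not repair this, as those lemmas concern a single $h$-independent cutoff $\chi$ composed with the flow, not an $h$-dependent nested family.

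The paper sidesteps this entirely by never changing the inner cutoff: at step $j$ the symbol is $\chi\bigl(1-\chi\circ e^{-s_jH_p}\bigr)$ with the \emph{same} $\chi$, so membership in $S^{\comp}_{L_u,\rho}$ is uniform in $j$ for free. The role your nesting was meant to play — ensuring that $e^{-TH_p}(\supp a)$ lands in $\{b=1\}$ — is instead accomplished by genuine dynamics: the contraction-to-$K$ facts~\eqref{e:gamma+p1}, \eqref{e:gamma+p2}, refined to~\eqref{e:cookie1}--\eqref{e:cookie3} after splitting $\chi=\chi_1+\chi_2$ with $\chi_1$ supported near the energy surface and $\chi_2$ away from $\Gamma_+\cap\{|\xi|_g=1\}$. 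The resulting inclusion~\eqref{e:dynamo1}, $e^{-(s_{j+1}-s_j+T_0)H_p}\bigl(\supp(\chi_1(1-\chi\circ e^{-s_{j+1}H_p}))\bigr)\subset\{\chi(1-\chi\circ e^{-s_jH_p})=1\}$, is what feeds into Lemma~\ref{l:finite-propagation}, while the $\chi_2$-piece is handled by Lemma~\ref{l:basicpar-improved}. So the fix is not to tune your shell thicknesses more carefully but to abandon the nested family and keep a fixed $\chi$, relying on the fact that a point remaining in $\supp\chi$ for a long time in both flow directions must lie in $\{\chi=1\}$. Once that is in place, your bookkeeping of the $L^2$ norms, the choice $T\sim\varepsilon_0^{-1}\beta_0 T_0$ to absorb the $\varepsilon_1$-factors and the $\mathcal O(\log(1/h))$ summands into $\exp(\varepsilon_0 t)$, and the $Ch^{-1}$ bound on each $Z$-piece are all correct and coincide with the paper.
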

\begin{proof}
We follow the proof of~\cite[Lemma~4.6]{hgap}. Choose $T_0>0$ such that
for all $(x,\xi)\in\{|\xi|_g=1\}$ and $t,t_1,t_2\geq T_0$, we have~\cite[(4.31),(4.32)]{hgap}:
\begin{align}
  \label{e:gamma+p1}
(x,\xi)\in\Gamma_+\cap \supp\chi\ &\Longrightarrow\ e^{-tH_p}(x,\xi)\notin\supp(1-\chi),\\
  \label{e:gamma+p2}
(x,\xi)\in e^{t_1H_p}(\supp\chi)\cap e^{-t_2H_p}(\supp\chi)\ &\Longrightarrow\ (x,\xi)\notin\supp(1-\chi).
\end{align}
Put
$$
T:=2(1+\varepsilon_0^{-1}\beta_0)T_0.
$$
Take a sequence
$$
s_{0}=0,s_{1},\dots,s_{k}=t,\quad
s_{j+1}-s_{j}\in [T/2,T].
$$
Note that $k\leq C\log(1/h)$ and for some $j$-independent $\varepsilon_1>0$,
\begin{equation}
  \label{e:flour}
\exp\big(-(s_{j+1}-s_j+T_0)\Im \omega/h\big)+\varepsilon_1<\exp\big((s_{j+1}-s_j)(\varepsilon_0-\Im\omega/h)\big).
\end{equation}
Put
$$
A^j_+:=\Op_h^{L_u}\big(\chi(1-\chi\circ e^{-s_j H_p})\big).
$$
We claim that uniformly in $j=1,\dots,k-1$,
\begin{equation}
  \label{e:gamma+inter1}
A_+^{j+1}=Z^j_+(\omega)\mathcal P_h(\omega)+J^j_+(\omega)A^j_++\mathcal O(h^\infty)_{\mathcal D'\to C^\infty}
\end{equation}
where $Z^j_+(\omega),J^j_+(\omega)$ are holomorphic in $\omega\in\Omega$ and for all $N$,
\begin{equation}
  \label{e:gamma+bounds}
\begin{aligned}
\|Z^{j}_+(\omega)\|_{\mathcal Y\to \mathcal X}&\leq Ch^{-1},\\
\|J^{j}_+(\omega)\|_{H^{-N}_h\to H^{N}_h}&\leq C_N,\\
\|J^{j}_+(\omega)\|_{L^2\to L^2}&\leq \exp\big((s_{j+1}-s_j)(\varepsilon_0-\Im\omega/h)\big).
\end{aligned}
\end{equation}
To see this, we decompose
\begin{equation}
  \label{e:chi-decomposition}
\begin{aligned}
\chi=\chi_1+\chi_2,&\quad
\chi_1,\chi_2\in C_0^\infty(T^*M\setminus 0;[0,1]),\\
\supp\chi_1\subset \{1/2<|\xi|_g<2\},&\quad
\supp\chi_2\cap\Gamma_+\cap \{|\xi|_g=1\}=\emptyset,
\end{aligned}
\end{equation}
where $\chi_1,\chi_2$ are independent of $j,h$ and for all $t\in [T_0,T_0+T]$,
$t_1,t_2\geq T_0$, we have~\cite[(4.33)--(4.35)]{hgap}:
\begin{align}
  \label{e:cookie1}
(x,\xi)\in\supp\chi_1\ &\Longrightarrow\
e^{-tH_p}(x,\xi)\notin\supp(1-\chi),\\
  \label{e:cookie2}
(x,\xi)\in e^{t_1H_p}(\supp\chi)\cap e^{-t_2H_p}(\supp\chi_1)\ &\Longrightarrow\
(x,\xi)\notin\supp(1-\chi),\\
  \label{e:cookie3}
(x,\xi)\in e^{t_1 H_p}(\supp\chi_1)\cap e^{-t_2H_p}(\supp\chi)\ &\Longrightarrow\
(x,\xi)\notin\supp(1-\chi).
\end{align}
Then for all $j=1,\dots, k-1$,
\begin{equation}
  \label{e:dynamo1}
e^{-(s_{j+1}-s_j+T_0)H_p}\big(\supp(\chi_1(1-\chi\circ e^{-s_{j+1}H_p}))\big)\ \subset\
\{\chi(1-\chi\circ e^{-s_{j}H_p})=1\}.
\end{equation}
Indeed, let $(x,\xi)\in \supp(\chi_1(1-\chi\circ e^{-s_{j+1}H_p}))$. By~\eqref{e:cookie1},
$\chi(e^{-(s_{j+1}-s_j+T_0)H_p}(x,\xi))=1$.
By~\eqref{e:cookie2} applied to $e^{-s_{j+1}H_p}(x,\xi)\in \supp(1-\chi)$,
$t_1=T_0$, $t_2=s_{j+1}$, we have
$\chi(e^{-(s_{j+1}+T_0)H_p}(x,\xi))=0$.
See Figure~\ref{f:gamma+par}.
\begin{figure}
\includegraphics{ifwl.8}
\caption{The sets $\supp \chi_1(1-\chi\circ e^{-s_{j+1}H_p})$
(left, dark shaded),
$\supp\chi_2(1-\chi\circ e^{-s_{j+1}H_p})$
(left, light shaded),
$e^{-(s_{j+1}-s_j+T_0)H_p}\big(\supp(\chi_1(1-\chi\circ e^{-s_{j+1}H_p}))\big)$
(right, dark shaded),
and $\{\chi(1-\chi\circ e^{-s_{j}H_p})=1\}$
(right, rectangles),
illustrating~\eqref{e:dynamo1}.}
\label{f:gamma+par}
\end{figure}

To show~\eqref{e:gamma+inter1}, it now suffices to write
$$
A_+^{j+1}=\Op_h^{L_u}\big(\chi_1(1-\chi\circ e^{-s_{j+1}H_p})\big)
+\Op_h^{L_u}\big(\chi_2(1-\chi\circ e^{-s_{j+1}H_p})\big)
$$
and express the first term on the right-hand side by
Lemma~\ref{l:finite-propagation} using~\eqref{e:dynamo1}, \eqref{e:flour},
and the second term, by
Lemma~\ref{l:basicpar-improved} using~\eqref{e:chi-decomposition} and $V:=\supp\chi_2$.

By~\eqref{e:gamma+p1}, we also have
$$
\supp(\chi(1-\chi\circ e^{-s_1H_p}))\cap\Gamma_+\cap \{|\xi|_g=1\}=\emptyset.
$$
Therefore, by Lemma~\ref{l:basicpar-1}, we may write
\begin{equation}
  \label{e:gamma+inter2}
A^1_+=Z^0_+(\omega)\mathcal P_h(\omega)+\mathcal O(h^\infty)_{\mathcal D'\to C^\infty}
\end{equation}
for some $Z^0_+(\omega)$ holomorphic in $\omega\in\Omega$ with
$\|Z^0_+(\omega)\|_{\mathcal Y\to\mathcal X}\leq Ch^{-1}$.
It remains to put
$$
Z_+(\omega,t):=\sum_{j=0}^{k-1} J_+^{k-1}(\omega)\dots J_+^{j+1}(\omega)Z_+^j(\omega)
$$
and use~\eqref{e:gamma+inter1}, \eqref{e:gamma+inter2}.
\end{proof}
We next give a strengthening of~\cite[(4.26)]{hgap}.
It is a refinement of Lemma~\ref{l:basicpar-2} since the
symbol $\chi(\chi\circ e^{tH_p})$ is elliptic only
$h^\rho$ near $K$.
\begin{lemm}
  \label{l:gamma-parametrix}
Fix $\chi,\rho,\varepsilon_0$ as in Lemma~\ref{l:gamma+parametrix}. Then there exists $T>0$ such that
uniformly in $t\in [T,\rho\log(1/h)]$ and $\omega\in\Omega$,
$$
1=Z_-(\omega,t)\mathcal P_h(\omega)+J_-(\omega,t)\Op_h^{L_s}\big(\chi(\chi\circ e^{tH_p})\big)
+\mathcal O(h^\infty)_{\mathcal D'\to C^\infty}.
$$
Here $\chi(\chi\circ e^{tH_p})\in S^{\comp}_{L_s,\rho}(T^*M\setminus 0)$ by~\cite[Lemma~4.2]{hgap}.
The operators $Z_-(\omega,t),J_-(\omega,t)$
are holomorphic in $\omega\in\Omega$ and satisfy for all $N$,
$$
\begin{aligned}
\|Z_-(\omega,t)\|_{\mathcal Y\to\mathcal X}&\leq Ch^{-1}\exp\big((\beta_0+\varepsilon_0)t\big),\quad
t\in [T,\rho\log(1/h)],\\
\|J_-(\omega,t)\|_{H^{-N}_h\to H^N_h}&\leq C_N\exp\big((\varepsilon_0-\Im\omega/h)t\big),\quad
t\in [T,\rho\log(1/h)].
\end{aligned}
$$
\end{lemm}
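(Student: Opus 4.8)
The plan is to follow the proof of Lemma~\ref{l:gamma+parametrix}, running the iteration ``forward'' in the stable time variable. As there, fix $T_0>0$ large enough that for all $(x,\xi)\in\{|\xi|_g=1\}$ and $t,t_1,t_2\geq T_0$ the analogues of~\eqref{e:gamma+p1}--\eqref{e:gamma+p2} for the stable foliation hold; in particular $(x,\xi)\in\Gamma_-\cap\supp\chi$ implies $e^{tH_p}(x,\xi)\notin\supp(1-\chi)$, and the double-cone condition giving $(x,\xi)\notin\supp(1-\chi)$ when $(x,\xi)$ lies in $e^{-t_1H_p}(\supp\chi)\cap e^{t_2H_p}(\supp\chi)$. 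Set $T:=2(1+\varepsilon_0^{-1}\beta_0)T_0$ and pick a partition $s_0=0,s_1,\dots,s_k=t$ with $s_{j+1}-s_j\in[T/2,T]$, so $k\leq C\log(1/h)$, and choose $\varepsilon_1>0$ (independent of $j$) so that $\exp(-(s_{j+1}-s_j+T_0)\Im\omega/h)+\varepsilon_1<\exp((s_{j+1}-s_j)(\varepsilon_0-\Im\omega/h))$, exactly as in~\eqref{e:flour}. Define $A^j_-:=\Op_h^{L_s}\big(\chi(\chi\circ e^{s_jH_p})\big)$, with the convention $A^0_-=\Op_h^{L_s}(\chi^2)$, which is elliptic on $K\cap\{|\xi|_g=1\}$.

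The core is an inductive step expressing $A^j_-$ in terms of $A^{j+1}_-$ (note the direction: here each step moves the elliptic set further along $\Gamma_-$, so we recover $A^j_-$ from the longer-time cutoff). Using a decomposition $\chi=\chi_1+\chi_2$ with $\supp\chi_1\subset\{1/2<|\xi|_g<2\}$ and $\supp\chi_2\cap\Gamma_-\cap\{|\xi|_g=1\}=\emptyset$, and the corresponding stable-side versions of~\eqref{e:cookie1}--\eqref{e:cookie3}, one checks the dynamical inclusion
\begin{equation*}
e^{-(s_{j+1}-s_j+T_0)H_p}\big(\supp(\chi_1(\chi\circ e^{s_jH_p}))\big)\ \subset\ \{\chi(\chi\circ e^{s_{j+1}H_p})=1\},
\end{equation*}
whose proof is the stable-foliation mirror of~\eqref{e:dynamo1}. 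Splitting $A^j_-=\Op_h^{L_s}(\chi_1(\chi\circ e^{s_jH_p}))+\Op_h^{L_s}(\chi_2(\chi\circ e^{s_jH_p}))$, apply Lemma~\ref{l:finite-propagation} (with $L=L_s$, target symbol $b$ equal to $1$ on the above set, time $s_{j+1}-s_j+T_0$) to the first piece, which yields $Z^j_-(\omega),J^j_-(\omega)$ with $\|Z^j_-\|_{\mathcal Y\to\mathcal X}\leq Ch^{-1}$, $\|J^j_-\|_{H^{-N}_h\to H^N_h}\leq C_N$, and, via~\eqref{e:J-omega-norm-bound} and the analogue of~\eqref{e:flour}, $\|J^j_-\|_{L^2\to L^2}\leq\exp((s_{j+1}-s_j)(\varepsilon_0-\Im\omega/h))$; apply Lemma~\ref{l:basicpar-improved} with $V:=\supp\chi_2$ to the second piece (absorbing it into $Z^j_-(\omega)\mathcal P_h(\omega)$). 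This gives
\begin{equation*}
A^j_-=Z^j_-(\omega)\mathcal P_h(\omega)+J^j_-(\omega)A^{j+1}_-+\mathcal O(h^\infty)_{\mathcal D'\to C^\infty}.
\end{equation*}
The base of the induction is Lemma~\ref{l:basicpar-2} applied with $A_2:=A^0_-=\Op_h^{L_s}(\chi^2)$, which is elliptic on $K\cap\{|\xi|_g=1\}$: it gives $1=Z^{-1}_-(\omega)\mathcal P_h(\omega)+J^{-1}_-(\omega)A^0_-+\mathcal O(h^\infty)$ with $\|Z^{-1}_-\|_{\mathcal Y\to\mathcal X}\leq Ch^{-1}$ and $\|J^{-1}_-\|_{H^{-N}_h\to H^N_h}\leq C_N$.

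Finally, telescoping from the base through the $k$ inductive steps and setting
\begin{equation*}
Z_-(\omega,t):=Z^{-1}_-(\omega)+\sum_{j=0}^{k-1}J^{-1}_-(\omega)J^0_-(\omega)\cdots J^{j-1}_-(\omega)\,Z^j_-(\omega),\qquad
J_-(\omega,t):=J^{-1}_-(\omega)J^0_-(\omega)\cdots J^{k-1}_-(\omega),
\end{equation*}
one obtains the claimed identity with $\Op_h^{L_s}(\chi(\chi\circ e^{tH_p}))=A^k_-$ on the right. The norm bounds follow by multiplying: each of the $k\leq C\log(1/h)$ factors $J^j_-$ contributes at most $\exp((s_{j+1}-s_j)(\varepsilon_0-\Im\omega/h))$ in $L^2\to L^2$ and $C_N$ in $H^{-N}_h\to H^N_h$, so $\|J_-(\omega,t)\|_{H^{-N}_h\to H^N_h}\leq C_N\exp((\varepsilon_0-\Im\omega/h)t)$ (using that the product of $\le C\log(1/h)$ constants $C_N$ is absorbed by an extra $\exp(\varepsilon_0 t/2)$, after shrinking $\varepsilon_0$; equivalently one runs the argument with $\varepsilon_0/2$ from the start), while for $Z_-$ each summand is bounded by $Ch^{-1}$ times the product of the preceding $J$-norms, i.e.\ $Ch^{-1}\exp((\varepsilon_0-\Im\omega/h)s_j)\leq Ch^{-1}\exp((\beta_0+\varepsilon_0)t)$ since $-\Im\omega/h\leq\beta_0$ on $\Omega$, and summing $k\leq C\log(1/h)$ such terms costs another harmless $\exp(\varepsilon_0 t)$ factor. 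The main obstacle, as in Lemma~\ref{l:gamma+parametrix}, is bookkeeping the accumulation of the $\sim\log(1/h)$ subexponential ($C_N$ and $k$) factors so that they are dominated by $\exp(\varepsilon_0 t)$; this is exactly what the choice $T=2(1+\varepsilon_0^{-1}\beta_0)T_0$ and the slack $\varepsilon_1$ in~\eqref{e:flour} are designed to handle, and the argument is routine once the stable-side dynamical inclusions~\eqref{e:cookie1}--\eqref{e:cookie3} and the inclusion displayed above are verified.
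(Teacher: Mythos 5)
Your overall strategy matches the paper's: set up the time sequence $s_0,\dots,s_k$, prove an inductive step of the form $A^j_-=Z^j_-\mathcal P_h(\omega)+J^j_-A^{j+1}_-+\mathcal O(h^\infty)$ via the dynamical inclusion~\eqref{e:dynamo2} and a split $\chi=\chi_1+\chi_2$, anchor the telescope with Lemma~\ref{l:basicpar-2}, and combine the $L^2$ bounds of Lemma~\ref{l:finite-propagation} with the bookkeeping from~\eqref{e:flour}. Your treatment of the norm accumulation is also fine.

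There is, however, a genuine gap in the decomposition $\chi=\chi_1+\chi_2$. You propose the ``stable-side mirror'' $\supp\chi_2\cap\Gamma_-\cap\{|\xi|_g=1\}=\emptyset$ and correspondingly flipped versions of~\eqref{e:cookie1}--\eqref{e:cookie3}. This does not work, and the asymmetry is the point: the paper reuses the \emph{same} $\chi_1,\chi_2$ as in Lemma~\ref{l:gamma+parametrix}, i.e. $\supp\chi_2\cap\Gamma_+\cap\{|\xi|_g=1\}=\emptyset$, for both the $\Gamma_+$ and $\Gamma_-$ iterations. The reason is twofold. First, the ``second piece'' $\Op_h^{L_s}(\chi_2(\chi\circ e^{s_jH_p}))$ is absorbed via Lemma~\ref{l:basicpar-improved} with $V=\supp\chi_2$, and that lemma (being based on the semiclassically outgoing property of $(\mathcal P_h(\omega)-iQ_1)^{-1}$) \emph{always} requires $V\cap\Gamma_+\cap\{|\xi|_g=1\}=\emptyset$, regardless of whether the foliation is $L_u$ or $L_s$. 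With your $\chi_2$ supported away from $\Gamma_-$ only, $V$ may still meet $\Gamma_+\cap\{|\xi|_g=1\}$, and the lemma no longer applies. Second, the inclusion~\eqref{e:dynamo2} you invoke is proved using~\eqref{e:cookie1}, which asserts that $e^{-tH_p}(\supp\chi_1)\subset\{\chi=1\}$ for $t\in[T_0,T_0+T]$; this forces $\supp\chi_1$ to concentrate near $\Gamma_+\cap\{|\xi|_g=1\}$ (points near $\Gamma_+$ stay near $K$ under backward flow, points near $\Gamma_-$ but away from $K$ escape). A $\chi_1$ concentrated near $\Gamma_-$ would break~\eqref{e:cookie1}, and the ``stable-side'' replacement $e^{tH_p}(\supp\chi_1)\subset\{\chi=1\}$ is not what~\eqref{e:dynamo2} (which still pulls back by $e^{-(s_{j+1}-s_j+T_0)H_p}$) requires. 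So the correct move is simply to keep the Lemma~\ref{l:gamma+parametrix} decomposition unchanged.

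One smaller point: you anchor the induction at $j=0$ with $A^0_-=\Op_h^{L_s}(\chi^2)$ and then run an inductive step $A^0_-\to A^1_-$. The paper instead notes $K\cap\{|\xi|_g=1\}\subset\{\chi(\chi\circ e^{s_1H_p})=1\}$ and applies Lemma~\ref{l:basicpar-2} directly with $A_2=A^1_-$, starting the induction at $j=1$. This sidesteps the fact that the paper's proof of~\eqref{e:dynamo2} uses~\eqref{e:cookie3} with $t_1=s_j-T_0$, which needs $s_j\geq 2T_0$ and so is only stated for $j\geq 1$; your $j=0$ step would need a slightly different argument (both parts from~\eqref{e:cookie1}), which you do not supply. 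This is fixable, but the paper's choice is cleaner.
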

\begin{proof}
Let $T_0,T,s_0,\dots,s_k,\chi_1,\chi_2$ be as in the proof of Lemma~\ref{l:gamma+parametrix}; put
$$
A_-^j:=\Op_h^{L_s}\big(\chi(\chi\circ e^{s_jH_p})\big).
$$
We claim that uniformly in $j=1,\dots,k-1$,
\begin{equation}
  \label{e:gamma-inter1}
A_-^j=Z_-^j(\omega)\mathcal P_h(\omega)+J_-^j(\omega)A_-^{j+1}+\mathcal O(h^\infty)_{\mathcal D'\to C^\infty},
\end{equation}
where $Z_-^j(\omega),J_-^j(\omega)$ are holomorphic in $\omega$ and satisfy the bounds~\eqref{e:gamma+bounds}.
To show this, we first claim that for all $j=1,\dots,k-1$,
\begin{equation}
  \label{e:dynamo2}
e^{-(s_{j+1}-s_j+T_0)H_p}\big(\supp(\chi_1(\chi\circ e^{s_jH_p}))\big)\ \subset\ \{\chi(\chi\circ e^{s_{j+1}H_p})=1\}.
\end{equation}
Indeed, let $(x,\xi)\in\supp(\chi_1(\chi\circ e^{s_jH_p}))$. By~\eqref{e:cookie1}, we get
$\chi(e^{-(s_{j+1}-s_j+T_0)H_p}(x,\xi))=1$. By~\eqref{e:cookie3} applied
to $e^{(s_j-T_0)H_p}(x,\xi)$, $t_1=s_j-T_0$, $t_2=T_0$, we get
$\chi(e^{(s_j-T_0)H_p}(x,\xi))=1$.
See Figure~\ref{f:gamma-par}.
Now~\eqref{e:gamma-inter1} is proved using Lemmas~\ref{l:finite-propagation} and~\ref{l:basicpar-improved}
similarly to~\eqref{e:gamma+inter1}.
\begin{figure}
\includegraphics{ifwl.9}
\caption{The sets $\supp(\chi_1(\chi\circ e^{s_jH_p}))$ (left, dark shaded),
$\supp(\chi_2(\chi\circ e^{s_jH_p}))$ (left, light shaded),
$e^{-(s_{j+1}-s_j+T_0)H_p}\big(\supp(\chi_1(\chi\circ e^{s_jH_p}))\big)$
(right, dark shaded),
and $\{\chi(\chi\circ e^{s_{j+1}H_p})=1\}$ (right, rectangle),
illustrating~\eqref{e:dynamo2}.}
\label{f:gamma-par}
\end{figure}

We next have
$$
K\cap \{|\xi|_g=1\}\ \subset\ \{\chi(\chi\circ e^{s_1H_p})=1\}.
$$
Therefore, by Lemma~\ref{l:basicpar-2}
\begin{equation}
  \label{e:gamma-inter2}
1=Z^0_-(\omega)\mathcal P_h(\omega)+J^0_-(\omega)A^1_-
+\mathcal O(h^\infty)_{\mathcal D'\to C^\infty}
\end{equation}
where $Z^0_-(\omega),J^0_-(\omega)$ are holomorphic in $\omega\in\Omega$ and satisfy
$\|Z^0_-(\omega)\|_{\mathcal Y\to\mathcal X}\leq Ch^{-1}$,
$\|J^0_-(\omega)\|_{H^{-N}_h\to H^N_h}\leq C_N$ for all $N$.

It remains to put
$$
Z_-(\omega):=\sum_{j=0}^{k-1} J^0_-(\omega)\dots J^{j-1}_-(\omega)Z^j_-(\omega),\quad
J_-(\omega):=J^0_-(\omega)\dots J^{k-1}_-(\omega)
$$
and use~\eqref{e:gamma-inter1}, \eqref{e:gamma-inter2}.
\end{proof}

\subsection{End of the proof}

We are now ready to prove Proposition~\ref{l:main-parametrix}.
By Lemma~\ref{l:gamma+parametrix} with $t:=\rho\log(1/h)$, we have
\begin{equation}
  \label{e:chi+gotcha}
\Op_h^{L_u}(\chi)=Z_+(\omega)\mathcal P_h(\omega)+\Op_h^{L_u}\big(\chi(\chi\circ e^{-\rho\log(1/h)H_p})\big)+\mathcal O(h^\infty)_{\mathcal D'\to C^\infty},
\end{equation}
where $Z_+(\omega)$ is holomorphic in $\omega\in\Omega$ and
$$
\|Z_+(\omega)\|_{\mathcal Y\to\mathcal X}\leq Ch^{-1-\rho(\beta_0+\varepsilon_0)}.
$$
We next use an elliptic estimate for symbols supported
$h^\rho$ outside of the energy surface.
Recall from~\eqref{e:the-cutoffs} that $\chi_+=\chi(\chi\circ e^{-\rho\log(1/h)H_p})\widetilde\chi\big((p-1)/h^\rho\big)$.
\begin{lemm}
  \label{l:elliptic-parametrix}
We have for $\omega\in\Omega$,
\begin{equation}
  \label{e:chi0-gotcha}
\Op_h^{L_u}\big(\chi(\chi\circ e^{-\rho\log(1/h)H_p})\big)
=Z_0(\omega)\mathcal P_h(\omega)+\Op_h^{L_u}(\chi_+)+\mathcal O(h^\infty)_{\mathcal D'\to C^\infty},
\end{equation}
with $Z_0(\omega)$ holomorphic in $\omega\in\Omega$ and
$\|Z_0(\omega)\|_{\mathcal Y\to\mathcal X}\leq Ch^{-\rho}$.
\end{lemm}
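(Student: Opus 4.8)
The plan is to read Lemma~\ref{l:elliptic-parametrix} as the elliptic estimate for $\mathcal P_h(\omega)$ on the support of the subtracted symbol, and prove it by a microlocal parametrix construction inside the calculus $\Psi^{\comp}_{h,L_u,\rho}$. Put $b:=\chi(\chi\circ e^{-\rho\log(1/h)H_p})-\chi_+$, which by~\eqref{e:the-cutoffs} equals $\chi(\chi\circ e^{-\rho\log(1/h)H_p})\,(1-\widetilde\chi((p-1)/h^\rho))$, so that~\eqref{e:chi0-gotcha} amounts to
$\Op_h^{L_u}(b)=Z_0(\omega)\mathcal P_h(\omega)+\mathcal O(h^\infty)_{\mathcal D'\to C^\infty}$ with $Z_0(\omega)$ holomorphic in $\omega$ and $\|Z_0(\omega)\|_{\mathcal Y\to\mathcal X}\le Ch^{-\rho}$. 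First I would check that $b\in S^{\comp}_{L_u,\rho}(T^*M\setminus 0)$, with $\supp b$ contained in $W_0$ and in $\{|p-1|\ge h^\rho\}$: the first factor lies in this class by~\cite[Lemma~4.2]{hgap}, while $1-\widetilde\chi((p-1)/h^\rho)$ is a bounded function of $p$ alone, hence constant along the leaves of $L_u$ (which are tangent to the level sets of $p$) and losing only $h^{-\rho}$ per transverse derivative, and it vanishes for $|p-1|\le h^\rho$ since $\widetilde\chi=1$ near $[-1,1]$. On $\supp b$ one has $p\in[\tfrac12,2]$ and $|\Re\omega-1|+|\Im\omega|=\mathcal O(h)\ll h^\rho$ for $\omega\in\Omega$, whence $|p+\omega|\ge 1$ and $|p-\omega|\ge\tfrac12|p-1|\ge\tfrac12 h^\rho$; thus $|p^2-\omega^2|\ge\tfrac12 h^\rho$ there, i.e.\ $\mathcal P_h(\omega)$ is elliptic on $\supp b$, but only with the degenerate lower bound $h^\rho$. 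Fixing an $h$-independent $e\in S^{\comp}_{L_u,\rho}$ equal to $1$ near $\supp b$ and supported in $W_0\cap\{|p-1|\ge\tfrac14 h^\rho\}$, the symbol $e\,(p^2-\omega^2)^{-1}$ is well defined, holomorphic in $\omega\in\Omega$, and lies in $h^{-\rho}S^{\comp}_{L_u,\rho}$: being a function of $p$ it is constant along the leaves of $L_u$, and the lower bound $|p^2-\omega^2|\ge\tfrac12 h^\rho$ controls its transverse derivatives.

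The mechanism is the following. By~\eqref{e:omega-resolved} and~\eqref{e:conjugators} (and \cite[(4.22)]{hgap}), $\mathcal P_h(\omega)=P^2-\omega^2$ microlocally near $W_0\supset\supp e$, where $P\in\Psi^{\comp}_h(M)$, $P^*=P$, $\sigma_h(P)=p$ near $W_0$; since each $\Op_h^{L_u}(a)$ with $\supp a\subset\supp e$ is compactly microlocalized inside $W_0$, I may freely replace $\mathcal P_h(\omega)$ by $P^2-\omega^2$ to the right of such operators, up to $\mathcal O(h^\infty)_{\mathcal D'\to C^\infty}$. The calculus $\Psi^{\comp}_{h,L_u,\rho}$ is built so that in the $h$-expansion of the symbol of $\Op_h^{L_u}(a)(P^2-\omega^2)$ the leading term is $a\,(p^2-\omega^2)$ and all correction terms differentiate the full symbol of $P^2-\omega^2$ only in directions \emph{along} $L_u$; since $L_u$ is tangent to the level sets of $p$, those derivatives annihilate the principal part $p^2-\omega^2$ and act only on the remainder $\sigma_h(P^2-\omega^2)-(p^2-\omega^2)\in h\,S^{\comp}_h$, so each correction carries a compensating factor $h$ even though each transverse derivative of $a$ costs $h^{-\rho}$. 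Concretely, for $a\in h^{\mu}S^{\comp}_{L_u,\rho}$ supported in $\supp e$,
\begin{align*}
\Op_h^{L_u}(a)\,\mathcal P_h(\omega)&=\Op_h^{L_u}\big(a\,(p^2-\omega^2)\big)+\Op_h^{L_u}(a')+\mathcal O(h^\infty)_{\mathcal D'\to C^\infty},\\ &\quad\text{with }a'\in h^{\mu+1}S^{\comp}_{L_u,\rho},\ \supp a'\subset\supp a,\ a'\text{ holomorphic in }\omega\text{ when }a\text{ is.}
\end{align*}
I would then iterate. Start with $g_0:=e\,b\,(p^2-\omega^2)^{-1}\in h^{-\rho}S^{\comp}_{L_u,\rho}$, so that $g_0(p^2-\omega^2)=b$ and $\Op_h^{L_u}(b)-\Op_h^{L_u}(g_0)\mathcal P_h(\omega)=\Op_h^{L_u}(r_1)+\mathcal O(h^\infty)$ with $r_1\in h^{1-\rho}S^{\comp}_{L_u,\rho}$ supported in $\supp e$. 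Given $g_0,\dots,g_j$ and a remainder $r_{j+1}\in h^{(j+1)(1-\rho)}S^{\comp}_{L_u,\rho}$ supported in $\supp e$ with $\Op_h^{L_u}(b)-\sum_{i\le j}\Op_h^{L_u}(g_i)\mathcal P_h(\omega)=\Op_h^{L_u}(r_{j+1})+\mathcal O(h^\infty)$, set $g_{j+1}:=e\,r_{j+1}\,(p^2-\omega^2)^{-1}\in h^{(j+1)(1-\rho)-\rho}S^{\comp}_{L_u,\rho}$ and apply the displayed identity to $a=g_{j+1}$; this produces the next remainder $r_{j+2}\in h^{(j+2)(1-\rho)}S^{\comp}_{L_u,\rho}$ supported in $\supp e$. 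Thus $g_j\in h^{-\rho+j(1-\rho)}S^{\comp}_{L_u,\rho}$, each step gaining a genuine power $h^{1-\rho}$ because $\rho<1$.

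Finally I would take an asymptotic sum $g\sim\sum_{j\ge0}g_j$ (so $g-\sum_{j\le N}g_j\in h^{-\rho+(N+1)(1-\rho)}S^{\comp}_{L_u,\rho}$ for each $N$) and put $Z_0(\omega):=\Op_h^{L_u}(g)$; truncating the iteration at a level depending on a given $M$ shows $\Op_h^{L_u}(b)=Z_0(\omega)\mathcal P_h(\omega)+\mathcal O(h^M)_{H^{-M}_h\to H^M_h}$, and since $M$ is arbitrary the remainder is $\mathcal O(h^\infty)_{\mathcal D'\to C^\infty}$, using that operators with symbol in $h^{M}S^{\comp}_{L_u,\rho}$ are $\mathcal O(h^M)_{H^{-M}_h(\Mext)\to H^M_h(\Mext)}$. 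For the norm bound, write $Z_0(\omega)=h^{-\rho}\Op_h^{L_u}(h^\rho g)$ with $h^\rho g\in S^{\comp}_{L_u,\rho}$ of $h$-uniformly bounded seminorms; such an operator is bounded $H^{s-1}_h(\Mext)\to H^{s+1}_h(\Mext)$ uniformly in $h$, and composing with the embedding $H^{s+1}_h(\Mext)\hookrightarrow\mathcal X$ from~\eqref{e:Y-Y} gives $\|Z_0(\omega)\|_{\mathcal Y\to\mathcal X}\le Ch^{-\rho}$; holomorphy of $Z_0(\omega)$ in $\omega\in\Omega$ is inherited from that of each $g_j$. Since $\chi(\chi\circ e^{-\rho\log(1/h)H_p})=\chi_++b$ and $\Op_h^{L_u}$ is linear, this is exactly~\eqref{e:chi0-gotcha}. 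The only delicate point — the main obstacle — is the degeneration of the ellipticity of $\mathcal P_h(\omega)$ on $\supp b$ to size $h^\rho$: a naive parametrix would lose a further $h^{-\rho}$ at every step and diverge, and it is precisely the adaptedness of $p^2-\omega^2$ to the foliation $L_u$ (confining all composition corrections to the $\mathcal O(h)$ subprincipal part of $P^2$) that supplies the $h^{1-\rho}$ gain per step and closes the construction with the stated bound.
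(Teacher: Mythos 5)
Your overall strategy is the same as the paper's: reduce to the subtracted symbol $b=\chi(\chi\circ e^{-\rho\log(1/h)H_p})-\chi_+$, which is supported in $\{|p-1|\ge h^\rho\}$, and run an iterative parametrix for $\mathcal P_h(\omega)$ in the exotic calculus, gaining $h^{1-\rho}$ per step because the degenerate ellipticity $|p^2-\omega^2|\gtrsim h^\rho$ loses $h^{-\rho}$ while the composition corrections give back a full $h$. The paper implements this by first conjugating by Fourier integral operators to the Darboux normal form of~\cite[Theorem~12.3]{e-z}, so that (locally) $\mathcal P_h(\omega)$ becomes exactly multiplication by $y_1$ with $(\varkappa_0)_*L_u$ equal to the vertical foliation $L_0$; then the composition formula is the exact identity $\Op_h(b)y_1=\Op_h(y_1b-ih\partial_{\eta_1}b)+\mathcal O(h^\infty)$ of~\cite[Lemma~3.8]{hgap}, and the recursion is written explicitly as $b_0=\tilde a/y_1$, $b_{j+1}=ih\partial_{\eta_1}b_j/y_1$. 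You instead state and use an invariant composition lemma $\Op_h^{L_u}(a)\mathcal P_h(\omega)=\Op_h^{L_u}(a(p^2-\omega^2))+\Op_h^{L_u}(a')+\mathcal O(h^\infty)$ with $a'\in h^{\mu+1}S^{\comp}_{L_u,\rho}$. This formula is correct and would carry the argument, but it is not among the tools supplied by~\cite[\S3]{hgap}; the route through the normal form is what makes the needed composition identity available in a ready-made form, so your version would require proving that lemma first (which again goes through the FIO conjugation).

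However, there is a genuine error in your explanation of \emph{why} the correction terms gain a power of $h$. You write that the corrections ``differentiate the full symbol of $P^2-\omega^2$ only in directions along $L_u$'' and hence ``annihilate the principal part $p^2-\omega^2$.'' That is backwards. In the straightened model the standard composition expansion reads $\sum_k \frac{(-ih)^k}{k!}\partial_\eta^k\tilde a\,\partial_y^k\tilde q$: the exotic symbol $\tilde a$ is differentiated \emph{along} $L_0$ (these derivatives are $O(1)$ for $S^{\comp}_{L_0,\rho}$, which is what you actually need), and the classical symbol $\tilde q$ is differentiated \emph{transversally} to $L_0$. With the normal form $\tilde q=y_1$, those transverse derivatives of $y_1$ are $e_1$ and then zero — they certainly do not vanish because $y_1$ is constant along $L_0$ (it isn't constant transversally, and it is constant along $L_0$, i.e.\ the derivatives that \emph{do} vanish are the ones that don't appear). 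The $h$-gain per term comes from the explicit $h^{|\alpha|}$ prefactor in the composition asymptotics combined with the tameness of $\partial_\eta^k\tilde a$, not from hitting the subprincipal part of $P^2-\omega^2$. Your final inequalities are unaffected, because the gain you assert ($h^{1-\rho}$ per step) is the correct one regardless, but as stated your account of the mechanism is wrong and would mislead a reader trying to verify the composition lemma. A smaller slip: the cutoff $e$ cannot be chosen $h$-independent while being supported in the $h$-dependent region $W_0\cap\{|p-1|\ge\tfrac14 h^\rho\}$; it must be an $h$-dependent element of $S^{\comp}_{L_u,\rho}$, which is harmless but should be said.
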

\begin{proof}
It suffices to show that for each
\begin{equation}
  \label{e:elliptic-a}
a\in S^{\comp}_{L_u,\rho}(T^*M\cap \{r<r_0\}\setminus 0),\quad
\supp a\cap \big\{|p-1|\leq h^{\rho}\big\}=\emptyset,
\end{equation}
there exists a family of operators holomorphic in $\omega\in\Omega$
$$
Z_a(\omega)\in h^{-\rho}\Psi^{\comp}_{h,L_u,\rho}(T^*M\setminus 0),\quad
\Op_h^{L_u}(a)=Z_a(\omega)\mathcal P_h(\omega)+\mathcal O(h^\infty)_{\mathcal D'\to C^\infty}.
$$
Indeed, \eqref{e:chi0-gotcha} follows by putting
$a:=\chi(\chi\circ e^{-\rho\log(1/h)H_p})-\chi_+$.

On $\supp a$, $L_u$ is tangent to level sets of $\sigma_h(P_h(\omega))=p^2-1$.
Therefore, by Darboux Theorem
(see the proof of~\cite[Lemma~3.6]{hgap})
for each $(x_0,\xi_0)\in \supp a$, there exists
a neighborhood $U_0$ of $(x_0,\xi_0)$ and a symplectomorphism
\begin{equation}
  \label{e:symplecto}
\varkappa:U_0\to T^*\mathbb R^n,\quad
\sigma_h(P_h(\omega))|_{U_0}=y_1\circ\varkappa,\quad
\varkappa_* L_u=L_0,
\end{equation}
where $L_0=\ker(dy)$ is the vertical Lagrangian foliation on $T^*\mathbb R^n$
and $y_1:\mathbb R^n\to\mathbb R$ is the first coordinate map.

By~\cite[Theorem~12.3]{e-z}, there exist Fourier integral operators
$$
B(\omega)\in I^{\comp}_h(\varkappa),\quad
B'(\omega)\in I^{\comp}_h(\varkappa^{-1})
$$
quantizing $\varkappa$ near $(x_0,\xi_0)$ in the sense of~\cite[(2.13)]{hgap} and such that
$$
\mathcal P_h(\omega)=B'(\omega)y_1B(\omega)+\mathcal O(h^\infty)\quad\text{microlocally near }(x_0,\xi_0).
$$
Applying a partition of unity to $a$, we may assume that it is supported
in a small neighborhood of $(x_0,\xi_0)$. Then by part~2 of~\cite[Lemma~3.12]{hgap},
we may write
$$
\Op_h^{L_u}(a)=B'(\omega)\Op_h(\tilde a)B(\omega)+\mathcal O(h^\infty)_{\mathcal D'\to C^\infty},\quad \tilde a\in S^{\comp}_{L_0,\rho}(T^*\mathbb R^n),
$$
where $\Op_h$ is the standard quantization procedure on $\mathbb R^n$
given by~\cite[(2.3)]{hgap}. Moreover, by~\eqref{e:elliptic-a} and~\eqref{e:symplecto}
we have
\begin{equation}
  \label{e:lalasupport}
\supp\tilde a\cap \{|y_1|\leq h^\rho\}=\emptyset.
\end{equation}
It remains to prove that there exists
$b\in h^{-\rho}S^{\comp}_{L_0,\rho}(T^*\mathbb R^n)$, $\supp b\subset\supp \tilde a$, such that
$$
\Op_h(\tilde a)=\Op_h(b)y_1+\mathcal O(h^\infty)_{L^2(\mathbb R^n)\to L^2(\mathbb R^n)}.
$$
Denote by $(y,\eta)$ the standard coordinates on $T^*\mathbb R^n$. Then by~\cite[Lemma~3.8]{hgap},
$$
\Op_h(b)y_1=\Op_h(y_1b-ih\partial_{\eta_1}b)+\mathcal O(h^\infty)_{L^2\to L^2}.
$$
Therefore we may take
$$
b\sim\sum_{j=0}^\infty b_j,\quad
b_0={\tilde a\over y_1},\quad
b_{j+1}=ih{\partial_{\eta_1}b_j\over y_1},\ j\geq 0.
$$
By induction and~\eqref{e:lalasupport}, we see that $b_j\in h^{-\rho+j(1-\rho)}S^{\comp}_{L_0,\rho}(T^*\mathbb R^n)$.
Therefore $b\in h^{-\rho}S^{\comp}_{L_0,\rho}(T^*\mathbb R^n)$, finishing the proof.
\end{proof}
Together, \eqref{e:chi+gotcha} and~\eqref{e:chi0-gotcha} give
\begin{equation}
  \label{e:chilalala}
\Op_h^{L_u}(\chi)=(Z_+(\omega)+Z_0(\omega))\mathcal P_h(\omega)
+\Op_h^{L_u}(\chi_+)+\mathcal O(h^\infty)_{\mathcal D'\to C^\infty}.
\end{equation}
Now, by Lemma~\ref{l:gamma-parametrix} with $t:=\rho'\log(1/h)$, we have
\begin{equation}
  \label{e:chi-gotcha}
1=Z_-(\omega)\mathcal P_h(\omega)+J_-(\omega)\Op_h^{L_s}(\chi_-)+\mathcal O(h^\infty)_{\mathcal D'\to C^\infty},
\end{equation}
where $Z_-(\omega),J_-(\omega)$ are holomorphic in $\omega\in\Omega$ and for all $N$,
$$
\begin{aligned}
\|Z_-(\omega)\|_{\mathcal Y\to\mathcal X}&\leq Ch^{-1-\rho'(\beta_0+\varepsilon_0)},\\
\|J_-(\omega)\|_{H^{-N}_h\to H^N_h}&\leq C_Nh^{\rho'(h^{-1}\Im\omega-\varepsilon_0)}.
\end{aligned}
$$
The final component of the proof is the following statement, reflecting the fact
that $\chi_-$ is supported very close to $\Gamma_-$, $\mathcal P_h(\omega)$
is invertible away from $\Gamma_+\cap \{|\xi|_g=1\}$ by Lemma~\ref{l:basicpar-improved},
and $\chi=1$ near $\Gamma_-\cap \Gamma_+\cap \{|\xi|_g=1\}=K\cap \{|\xi|_g=1\}$.
\begin{lemm}
We have
\begin{equation}
  \label{e:bali}
\Op_h^{L_s}(\chi_-)=Z_\chi(\omega)\mathcal P_h(\omega)+\Op_h^{L_s}(\chi_-)W\Op_h^{L_u}(\chi)
+\mathcal O(h^\infty)_{\mathcal D'\to C^\infty},
\end{equation}
for some $W\in\Psi^{\comp}_h(M)$, $Z_\chi(\omega)$ holomorphic in $\omega\in\Omega$,
and $\|Z_\chi(\omega)\|_{\mathcal Y\to\mathcal X}\leq Ch^{-1}$.
\end{lemm}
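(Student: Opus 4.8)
The plan is to split $\Op_h^{L_s}(\chi_-)$ into a piece microlocalized near $K\cap\{|\xi|_g=1\}$, where $\chi\equiv 1$ so that $\Op_h^{L_u}(\chi)$ is elliptic and can be divided out, and a complementary piece microlocalized away from $K\cap\{|\xi|_g=1\}$. Since $\supp\chi_-$ lies in an $h^{\rho'}$ neighborhood of $\Gamma_-$ and $\Gamma_+\cap\Gamma_-\cap\{|\xi|_g=1\}=K\cap\{|\xi|_g=1\}$, this complementary piece is microlocalized away from $\Gamma_+\cap\{|\xi|_g=1\}$ for $h$ small, so Lemma~\ref{l:basicpar-1} will absorb it into the $\mathcal P_h(\omega)$ term.

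First I would fix $\Psi_0\in\Psi^{\comp}_h(M)$ with $h$-independent symbol such that $\sigma_h(\Psi_0)\equiv 1$ on an open neighborhood $U_K$ of $K\cap\{|\xi|_g=1\}$ and $\WFh(\Psi_0)$ is contained in an open set on which $\chi\equiv 1$; we may also assume $r_0$ was taken large enough that $\supp\chi\subset\{r<r_0\}$. As $\Op_h^{L_u}(\chi)\in\Psi^{\comp}_h(M)$ has principal symbol $\chi$, it is elliptic on $\WFh(\Psi_0)$ with symbol $1$ there, so the elliptic parametrix construction~\cite[Proposition~E.31]{dizzy} yields $W\in\Psi^{\comp}_h(M)$ with $W\Op_h^{L_u}(\chi)=\Psi_0+\mathcal O(h^\infty)$ (for instance $W=\Psi_0E$ with $E$ a microlocal parametrix of $\Op_h^{L_u}(\chi)$ near $\WFh(\Psi_0)$). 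Writing $1=\Psi_0+(1-\Psi_0)$ then gives
\begin{equation*}
\Op_h^{L_s}(\chi_-)=\Op_h^{L_s}(\chi_-)W\Op_h^{L_u}(\chi)+\Op_h^{L_s}(\chi_-)(1-\Psi_0)+\mathcal O(h^\infty)_{\mathcal D'\to C^\infty},
\end{equation*}
so it remains to show $\Op_h^{L_s}(\chi_-)(1-\Psi_0)=Z_\chi(\omega)\mathcal P_h(\omega)+\mathcal O(h^\infty)$ with $Z_\chi(\omega)$ holomorphic in $\omega\in\Omega$ and $\|Z_\chi(\omega)\|_{\mathcal Y\to\mathcal X}\leq Ch^{-1}$.

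To do this I would note, using the calculus of~\cite[\S3]{hgap}, that the two terms in $\Op_h^{L_s}(\chi_-)(1-\Psi_0)=\Op_h^{L_s}(\chi_-)-\Op_h^{L_s}(\chi_-)\Psi_0$ cancel to infinite order on $U_K$; hence $\Op_h^{L_s}(\chi_-)(1-\Psi_0)$ is a difference of compactly microlocalized operators whose $\WF'_h$ lies in the diagonal over $S_h:=\supp\chi_-\setminus U_K$. A compactness argument then shows that for $h$ small $S_h$ stays at a fixed positive distance from $\Gamma_+\cap\{|\xi|_g=1\}$: if not, there would be $q_h\in S_h$ converging (along a subsequence) to some $q_*\in\supp\chi\setminus U_K$ with $q_*\in\Gamma_-$ (as $\supp\chi_-$ lies in an $h^{\rho'}$ neighborhood of $\Gamma_-$ by~\cite[Lemma~4.3]{hgap}) and $q_*\in\Gamma_+\cap\{|\xi|_g=1\}$, contradicting $K\cap\{|\xi|_g=1\}=\Gamma_-\cap\Gamma_+\cap\{|\xi|_g=1\}\subset U_K$. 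Thus for $h$ small $S_h$ is contained in a single fixed compact set $\mathcal S_0\subset\{r<r_0\}\setminus(\Gamma_+\cap\{|\xi|_g=1\})$. Fixing $B'\in\Psi^{\comp}_h(M)$ with $\sigma_h(B')\equiv 1$ on a fixed neighborhood of $\mathcal S_0$ and $\WFh(B')\subset\{r<r_0\}\setminus(\Gamma_+\cap\{|\xi|_g=1\})$, pseudolocality gives $\Op_h^{L_s}(\chi_-)(1-\Psi_0)=\Op_h^{L_s}(\chi_-)(1-\Psi_0)B'+\mathcal O(h^\infty)$, while Lemma~\ref{l:basicpar-1} applied to $A_1:=B'$ gives $B'=Z_1(\omega)\mathcal P_h(\omega)+\mathcal O(h^\infty)$ with $Z_1(\omega)$ holomorphic in $\omega\in\Omega$ and $\|Z_1(\omega)\|_{\mathcal Y\to\mathcal X}\leq Ch^{-1}$. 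Setting $Z_\chi(\omega):=\Op_h^{L_s}(\chi_-)(1-\Psi_0)Z_1(\omega)$ then finishes the proof: it is holomorphic in $\omega$, it satisfies $\|Z_\chi(\omega)\|_{\mathcal Y\to\mathcal X}\leq Ch^{-1}$ because the compactly microlocalized operator $\Op_h^{L_s}(\chi_-)(1-\Psi_0)$ maps $\mathcal X\hookrightarrow H^s_h$ into $H^{s+1}_h\hookrightarrow\mathcal X$ uniformly in $h$ by~\eqref{e:Y-Y}, and $\mathcal P_h(\omega)$ is never commuted past any operator since it enters only through the rightmost factor $B'$.

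The main obstacle is the $h$-dependence bookkeeping: the supports of $\chi_-$ and of $S_h$ shrink toward $\Gamma_-$ as $h\to0$, so Lemma~\ref{l:basicpar-improved} cannot be applied directly (it requires an $h$-independent support), and one must instead cap $\Op_h^{L_s}(\chi_-)(1-\Psi_0)$ by the fixed operator $B'$; the crux is the routine-but-careful compactness argument above, which trades the $h^{\rho'}$ thickening of $\Gamma_-$ against the fixed distance from $\big(\Gamma_+\cap\{|\xi|_g=1\}\cap\supp\chi\big)\setminus U_K$ to $\Gamma_-$, and which relies on $K\cap\{|\xi|_g=1\}$ being contained in the region $U_K$ where $\sigma_h(\Psi_0)\equiv1$.
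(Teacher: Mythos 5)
Your proof is correct and follows the same overall strategy as the paper: build $W$ via an elliptic parametrix so that $W\Op_h^{L_u}(\chi)$ acts microlocally as the identity near $K\cap\{|\xi|_g=1\}$, then absorb the complementary piece into the $\mathcal P_h(\omega)$ term. The one genuine difference lies in how you show the complementary piece is microlocalized away from $\Gamma_+\cap\{|\xi|_g=1\}$. The paper's proof tracks this explicitly: it uses the dynamical conditions~\eqref{e:cookie3} and~\eqref{e:gamma+p1} (established for Lemma~\ref{l:gamma+parametrix}) to show that the remainder symbol $a$, a priori supported in $\supp(1-\chi)\cap\supp\chi\cap e^{-\rho'\log(1/h)H_p}(\supp\chi)$, actually sits inside the $h$-independent set $V=\supp\chi\cap e^{T_0H_p}(\supp(1-\chi_1))$, and verifies $V\cap\Gamma_+\cap\{|\xi|_g=1\}=\emptyset$ directly. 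You instead run a soft compactness argument: if the claim failed, you would extract $q_h\to q_*$ with $q_*$ lying simultaneously in $\Gamma_-$, in $\Gamma_+\cap\{|\xi|_g=1\}$, and in $\supp\chi\setminus U_K$, contradicting $K\cap\{|\xi|_g=1\}\subset U_K$. Both arguments establish the same containment; yours avoids the bookkeeping with $\chi_1,\chi_2$ at the cost of being non-constructive about the neighborhood.

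A few small imprecisions to note. First, you should arrange $\Psi_0=1+\mathcal O(h^\infty)$ microlocally on $U_K$ (i.e.\ $\WFh(1-\Psi_0)\cap U_K=\emptyset$), not merely $\sigma_h(\Psi_0)\equiv1$ there; this is what lets you conclude $\Op_h^{L_s}(\chi_-)(1-\Psi_0)$ is negligible over $U_K$. Second, the parenthetical justification that $q_*\in\Gamma_-$ cites the $h^{\rho'}$-neighborhood description of $\supp\chi_-$, which the paper phrases only "near $K$"; the cleaner argument is the escape property of non-$\Gamma_-$ orbits, namely that if $q_*\notin\Gamma_-$ then by the convexity of $r$ the forward orbit of a neighborhood of $q_*$ eventually leaves $\supp\chi$ permanently, contradicting $e^{\rho'\log(1/h)H_p}(q_h)\in\supp\chi$. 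Third, your stated reason for not using Lemma~\ref{l:basicpar-improved} is mistaken: that lemma only requires $\supp a$ to lie in a fixed $h$-independent compact $V$ (the symbol itself may be $h$-dependent), which is exactly what your compactness argument delivers, so you could apply it directly as the paper does. Your detour through the cap $B'$ and Lemma~\ref{l:basicpar-1} is nevertheless valid and arrives at the same conclusion.
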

\begin{proof}
Since $\chi$ is $h$-independent, $\Op_h^{L_u}(\chi)\in\Psi^{\comp}_h(M)$.
By the elliptic parametrix construction~\cite[Proposition~E.31]{dizzy},
there exists $W\in\Psi^{\comp}_h(M)$ such that
$$
W\Op_h^{L_u}(\chi)=1\quad\text{microlocally near }\{\chi=1\}.
$$
Therefore,
$$
\Op_h^{L_s}(\chi_-)(1-W\Op_h^{L_u}(\chi))
=\Op_h^{L_s}(a)+\mathcal O(h^\infty)_{\mathcal D'\to C^\infty},
$$
for some $a\in S^{\comp}_{L_s,\rho'}(T^*M\setminus 0)$ such that
$$
\supp a\ \subset\ \supp(1-\chi)\cap \supp\chi\cap e^{-\rho'\log(1/h)H_p}(\supp\chi).
$$
Choose $T_0>0$, $\chi_1,\chi_2$ as in  the proof of Lemma~\ref{l:gamma+parametrix}. Then by~\eqref{e:cookie3} with $t_1=T_0$, $t_2=\rho'\log(1/h)$
$$
\supp a\ \subset\ V:=\supp\chi\cap e^{T_0H_p}(\supp(1-\chi_1)).
$$
Now, \eqref{e:bali} follows from Lemma~\ref{l:basicpar-improved}
once we prove that
\begin{equation}
  \label{e:i-hate-point-set-topology}
V\cap \Gamma_+\cap \{|\xi|_g=1\}=\emptyset.
\end{equation}
To show~\eqref{e:i-hate-point-set-topology}, let $(x,\xi)\in V\cap\Gamma_+\cap \{|\xi|_g=1\}$.
By~\eqref{e:gamma+p1}, $e^{-T_0H_p}(x,\xi)\notin\supp (1-\chi)$.
However, $e^{-T_0H_p}(x,\xi)\in\supp(1-\chi_1)$;
by~\eqref{e:chi-decomposition}, $\chi=\chi_1+\chi_2$
and $e^{-T_0H_p}(x,\xi)\notin\supp\chi_2$, giving a contradiction.
\end{proof}
To show Proposition~\ref{l:main-parametrix}, it now remains to put
$$
\begin{aligned}
\mathcal Z(\omega)&:=Z_-(\omega)+J_-(\omega)Z_\chi(\omega)+J_-(\omega)\Op_h^{L_s}(\chi_-)W\big(Z_+(\omega)+Z_0(\omega)\big),\\
\mathcal J(\omega)&:=J_-(\omega)
\end{aligned}
$$
and use~\eqref{e:chilalala}--\eqref{e:bali}.

\section{Hilbert--Schmidt estimates}
  \label{s:hs}

In this section, we prove a Hilbert--Schmidt norm estimate
on the operator featured in~\eqref{e:main-parametrix}. See for instance~\cite[\S B.4]{dizzy}
for an introduction to Hilbert--Schmidt operators.
See also~\cite[(5.4)]{hgap} and~\cite[Lemma~5.12]{NonnenmacherZworskiInv} for
related statements estimating the operator norm instead of the Hilbert--Schmidt norm.
\begin{prop}
  \label{l:hs}
Let $\rho,\rho'\in (0,1)$, $\varepsilon_0>0$, and
$\chi_\pm,W,\mathcal J(\omega),\mathcal E(\omega)$ be as in Proposition~\ref{l:main-parametrix}.
Then
\begin{equation}
  \label{e:aop}
\mathcal A(\omega):=\mathcal J(\omega)\Op_h^{L_s}(\chi_-)W\Op_h^{L_u}(\chi_+)
+\mathcal E(\omega),\quad
\omega\in\Omega
\end{equation}
is a Hilbert--Schmidt operator on the space $\mathcal X$, and
\begin{equation}
  \label{e:hs}
\|\mathcal A(\omega)\|^2_{\HS(\mathcal X)}\leq Ch^{-n+\rho(n-\delta)+\rho'(n-1-\delta-2\beta_0-2\varepsilon_0)}.
\end{equation}
\end{prop}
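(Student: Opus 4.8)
The strategy is to reduce the Hilbert--Schmidt bound on $\mathcal A(\omega)$ to a pointwise bound on a Schwartz kernel, using that $\mathcal A(\omega)$ is smoothing with a compactly microlocalized kernel. Since $\mathcal E(\omega)$ is $\mathcal O(h^\infty)_{\mathcal D'\to C^\infty}$ it contributes nothing to the leading order in \eqref{e:hs}, so it suffices to estimate $B(\omega):=\mathcal J(\omega)\Op_h^{L_s}(\chi_-)W\Op_h^{L_u}(\chi_+)$. The operator $W\in\Psi^{\comp}_h(M)$ is bounded uniformly on every $H^{-N}_h\to H^N_h$, and $\mathcal J(\omega)$ satisfies the norm bound $\|\mathcal J(\omega)\|_{H^{-N}_h\to H^N_h}\leq C_N h^{\rho'(h^{-1}\Im\omega-\varepsilon_0)}$ from \eqref{e:para-bounds-J}; on $\Omega$ we have $\Im\omega/h\in[-\beta_0,1]$, so $\|\mathcal J(\omega)\|\leq C h^{-\rho'(\beta_0+\varepsilon_0)}$. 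So the content is the Hilbert--Schmidt bound on the product $\Op_h^{L_s}(\chi_-)\Op_h^{L_u}(\chi_+)$, and I expect
$$
\|\Op_h^{L_s}(\chi_-)W\Op_h^{L_u}(\chi_+)\|^2_{\HS}\leq Ch^{-n+\rho(n-\delta)+\rho'(n-1-\delta)},
$$
after which multiplying by $\|\mathcal J(\omega)\|^2\leq Ch^{-2\rho'(\beta_0+\varepsilon_0)}$ gives exactly \eqref{e:hs}. (One must also pass from $\HS(\mathcal X)$ to $\HS(L^2)$; because the kernel is compactly microlocalized and smoothing, the two norms are equivalent up to a constant, using \eqref{e:Y-Y} and boundedness of $\Op_h^{L_s}(\chi_-)W\Op_h^{L_u}(\chi_+)$ between Sobolev spaces, so this is harmless.)

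To estimate the Hilbert--Schmidt norm of $\Op_h^{L_s}(\chi_-)W\Op_h^{L_u}(\chi_+)$ I would write out its Schwartz kernel. Since $W$ is a standard semiclassical $\Psi$DO with compact wavefront set near $K\cap\{|\xi|_g=1\}$, the kernel of the product is, modulo $\mathcal O(h^\infty)$, an oscillatory integral with amplitude built from $\chi_-$ and $\chi_+$ (the calculus of \cite[\S3]{hgap} gives composition formulas for $\Op_h^L$ with standard $\Psi$DOs — cf.\ \cite[Lemma~3.12]{hgap}). The key structural fact is that $\Op_h^{L_u}(\chi_+)$ has a kernel whose $L^2$-mass is controlled by $h^{-n}$ times the symplectic volume of $\supp\chi_+$, and likewise for $\chi_-$; because the product kernel is supported (microlocally) where $\supp\chi_-\cap\supp\chi_+$ is nonempty, the Hilbert--Schmidt norm squared is bounded by $h^{-n}$ times $\operatorname{vol}(\supp\chi_-\cap\supp\chi_+)$ (this is the heuristic made precise in the paper's outline around \eqref{e:banana}). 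I would make this rigorous by covering a neighborhood of $K\cap\{|\xi|_g=1\}$ with Darboux charts adapted to $L_s$ (resp.\ $L_u$) as in Lemma~\ref{l:elliptic-parametrix}, conjugating by FIOs so that $\Op_h^{L_s}(\chi_-)$ and $\Op_h^{L_u}(\chi_+)$ become, in model coordinates on $T^*\mathbb{R}^n$, quantizations of symbols in $S^{\comp}_{L_0,\rho'}$ and $S^{\comp}_{L_0,\rho}$ for the vertical foliation, then computing the $L^2$-operator kernel of a $\Op_h(a)$ directly and applying the Schur-type $\HS$ estimate $\|\Op_h(a)\|_{\HS}^2=(2\pi h)^{-n}\|a\|_{L^2}^2$.

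The volume count is then the heart: I need $\operatorname{vol}\bigl(\supp\chi_-\cap\supp\chi_+\bigr)\leq Ch^{\rho(n-\delta)+\rho'(n-1-\delta)}$. Near any point of $K\cap\{|\xi|_g=1\}$ one decomposes directions into: the flow direction ($H_p$, fixed size, no $h$-gain), the energy-transversal direction (dilation; $\chi_+$ confines this to an $h^\rho$-interval via the factor $\widetilde\chi((p-1)/h^\rho)$), the unstable directions $E_u$, and the stable directions $E_s$. On $\supp\chi_+$, the $E_u$-slice is an $h^\rho$-neighborhood of $\Lambda_\Gamma$ and on $\supp\chi_-$ the $E_s$-slice is an $h^{\rho'}$-neighborhood of $\Lambda_\Gamma$; the limit set having Hausdorff dimension $\delta$ means an $r$-neighborhood of it in $\mathbb{S}^{n-1}$ has volume $\mathcal O(r^{n-1-\delta+\varepsilon})$ for any $\varepsilon>0$ — this is the Patterson--Sullivan dimension bound, and is where the $\varepsilon$-loss in Theorem~\ref{t:ifwl} enters. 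Multiplying the four slice-volumes: $h^{\rho}$ (transversal) $\times\,\mathcal O(1)$ (flow) $\times\,h^{\rho(n-1-\delta)}$ ($E_u$, from $\chi_+$) $\times\,h^{\rho'(n-1-\delta)}$ ($E_s$, from $\chi_-$) — wait, I should be careful: the transversal $h^\rho$-confinement is only on $\chi_+$, and $\chi_+$ localizes to an $h^\rho$-neighborhood of $\Gamma_+$ while $\chi_-$ localizes to an $h^{\rho'}$-neighborhood of $\Gamma_-$; the intersection $\Gamma_+\cap\Gamma_-=K$ is codimension-$(n-1)+(n-1)$ in the $(2n-1)$-dimensional energy surface, refined further by the dimension-$\delta$ structure, so the transversal-to-$K$ volume along $E_u$ is $\min(h^\rho,h^{\rho'})^{\,\#}\cdots$ — the cleanest bookkeeping, matching the paper's outline, gives a factor $h^\rho$ for the single energy-transversal direction, $h^{\rho(n-1-\delta)}$ from $\chi_+$'s confinement near $\Lambda_\Gamma$ in the unstable leaf, and $h^{\rho'(n-1-\delta)}$ from $\chi_-$'s confinement near $\Lambda_\Gamma$ in the stable leaf, total $h^{\rho+\rho(n-1-\delta)+\rho'(n-1-\delta)}=h^{\rho(n-\delta)+\rho'(n-1-\delta)}$, and multiplying by $h^{-n}$ yields $h^{-n+\rho(n-\delta)+\rho'(n-1-\delta)}$ up to the $\varepsilon_0$-loss, as desired. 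The main obstacle is making the volume estimate uniform over all of $K\cap\{|\xi|_g=1\}$ while controlling the distortion of the neighborhoods of $\Lambda_\Gamma$ under both the long-time flows $e^{\pm\rho\log(1/h)H_p}$ (which stretch/contract by polynomial-in-$h$ factors) and the Darboux charts; this requires knowing that in the charts the weak stable/unstable leaves are uniformly transverse and that the conditional measure of $\Lambda_\Gamma$ along leaves satisfies the $\delta$-dimensional Ahlfors regularity uniformly — facts that go back to the geometry of \S\ref{s:geometry} and the properties of $\chi_\pm$ established in \cite[Lemmas~4.2, 4.3]{hgap}.
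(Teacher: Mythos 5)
Your reduction to estimating $\|\Op_h^{L_s}(\chi_-)W\Op_h^{L_u}(\chi_+)\|_{\HS(L^2)}^2$, with the $\mathcal J(\omega)$ factor contributing $h^{-2\rho'(\beta_0+\varepsilon_0)}$ via \eqref{e:para-bounds-J}, and your idea of straightening each foliation by a Darboux-type symplectomorphism, both match the paper. However, there is a genuine gap at the central step. You cannot send $L_s$ and $L_u$ to the vertical foliation $L_V$ by a \emph{single} symplectomorphism: there are two distinct maps $\varkappa_0^-$ and $\varkappa_0^+$ (see Lemma~\ref{l:hs-normal-form}), quantized by two different pairs of FIOs $\mathcal B_\pm,\mathcal B_\pm'$. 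Inserting these conjugations into the product yields
\begin{equation*}
\Op_h^{L_s}(\chi_-)\,W\,\Op_h^{L_u}(\chi_+)
=\mathcal B_-'\,A_-\,(\mathcal B_-\mathcal B_+')\,A_+\,\mathcal B_+
+\mathcal O(h^\infty),
\end{equation*}
where $A_\pm$ are $\Psi$DOs in the $L_V$-calculus but the middle factor $\mathcal B_-\mathcal B_+'$ is a Fourier integral operator quantizing the \emph{non-identity} map $\varkappa_0^-\circ(\varkappa_0^+)^{-1}$. The composite is therefore not of the form $\Op_h(a)$ for any amplitude, so the identity $\|\Op_h(a)\|_{\HS}^2=(2\pi h)^{-n}\|a\|_{L^2}^2$ does not apply as you invoke it. (A naive bound $\|AB\|_{\HS}^2=\tr(A^*A\,BB^*)\leq \|A^*A\|_{\HS}\|BB^*\|_{\HS}$ only gives $h^{-n+\rho(n-\delta)/2+\rho'(n-1-\delta)/2}$, losing half the expected exponent.) What makes the paper's argument close is the normal form (Lemma~\ref{l:hs-normal-form}): the intermediate FIO is the explicit operator $\widetilde{\mathcal B}_\psi$ with kernel proportional to $\bigl|\frac{y-y'}{2}\bigr|^{2iw/h}\psi(y,y')$, and the decisive observation (Lemma~\ref{l:hs-inside}) is that this oscillatory factor has modulus one, so the kernel of $\widetilde{\mathcal A}=\psi_-(y;h)\widetilde{\mathcal B}_\psi\psi_+(y;h)\psi_0(w;h)\widetilde\psi(hD_w)$ factorizes pointwise into a product of profiles in $y$, $y'$, $w$, and $w'-w$, each of which is integrated separately against the measure bounds on $\supp\psi_\pm$ and $\supp\psi_0$. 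Your proposal identifies neither the intermediate FIO nor the modulus-one cancellation that turns the volume heuristic around \eqref{e:banana} into an estimate.

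A secondary inaccuracy: you attribute the $\varepsilon$-loss of Theorem~\ref{t:ifwl} to a neighborhood-volume bound $\mu_L(\Lambda_\Gamma(\alpha))=\mathcal O(\alpha^{n-1-\delta-\varepsilon})$. In fact the paper uses the sharp Ahlfors-regularity bound $\mu_L(\Lambda_\Gamma(\alpha))\leq C\alpha^{n-1-\delta}$ from~\cite[(1.5)]{hgap}, with no loss; the $\varepsilon$ in Theorem~\ref{t:ifwl} arises only because $\rho,\rho'$ are taken strictly less than $1$ in the final optimization of~\S\ref{s:proofs}, not from the volume estimate. Hausdorff dimension alone would give no neighborhood-volume control at all; what is needed, and what the Patterson--Sullivan theory provides, is Ahlfors $\delta$-regularity of $\Lambda_\Gamma$.
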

\Remark
The exponent in~\eqref{e:hs} can be heuristically explained as follows:
\begin{itemize}
\item $h^{-n}$ corresponds to restricting to frequencies $\lesssim h^{-1}$;
\item $h^{\rho(n-\delta)}$ comes from the volume of $\supp\chi_+$, which lies
inside an $h^\rho$-neighborhood of $\Gamma_+\cap \{|\xi|_g=1\}$;
\item $h^{\rho'(n-1-\delta)}$ comes from the volume of $\supp\chi_-$, which
lies inside an $h^{\rho'}$-neighborhood of $\Gamma_-$;
\item $h^{-2\rho'(\beta_0+\varepsilon_0)}$ comes from the square of the operator norm
of $\mathcal J(\omega)$, see~\eqref{e:para-bounds-J}.
\end{itemize}
To prove Proposition~\ref{l:hs}, we first note that by~\eqref{e:main-remainder}
and~\eqref{e:Y-Y}
$$
\|\mathcal E(\omega)\|_{\HS(\mathcal X)}
\leq C\|\mathcal E(\omega)\|_{\HS(H^s_h(\Mext)\to H^{s+1}_h(\Mext))}
=\mathcal O(h^\infty).
$$
By~\eqref{e:para-bounds-J} and the ideal property
of the Hilbert--Schmidt class, we then have
$$
\begin{aligned}
\|\mathcal A(\omega)\|_{\HS(\mathcal X)}
&\leq Ch^{-\rho'(\beta_0+\varepsilon_0)}\|
\Op_h^{L_s}(\chi_-)W\Op_h^{L_u}(\chi_+)\|_{\HS(\mathcal X\to L^2)}
+\mathcal O(h^\infty)\\
&\leq Ch^{-\rho'(\beta_0+\varepsilon_0)}\|
\Op_h^{L_s}(\chi_-)W\Op_h^{L_u}(\chi_+)\|_{\HS(L^2)}
+\mathcal O(h^\infty)
\end{aligned}
$$
where the last inequality follows from the fact 
$\mathcal X\subset L^2$.
Since $\Op_h^{L_s}(\chi_-)W\Op_h^{L_u}(\chi_+)$
is compactly supported on $M$,
it suffices to prove the following estimate:
\begin{equation}
  \label{e:main-hs-estimate}
\|\Op_h^{L_s}(\chi_-)W\Op_h^{L_u}(\chi_+)\|_{\HS(L^2(M))}^2
\leq Ch^{-n+\rho(n-\delta)+\rho'(n-1-\delta)}.
\end{equation}
To show~\eqref{e:main-hs-estimate}, we will follow~\cite[\S4.4]{hgap}, in particular
the proof of~\cite[Theorem~3]{hgap} there.
We start by bringing the
operator in~\eqref{e:main-hs-estimate} to a normal form. Let $\Lambda_\Gamma\subset\mathbb S^{n-1}$
be the limit set of the group $\Gamma$, $M=\Gamma\backslash\mathbb H^n$~--
see~\cite[(4.11)]{hgap}. For $\alpha>0$, denote by $\Lambda_\Gamma(\alpha)\subset \mathbb S^{n-1}$
the $\alpha$-neighborhood of $\Lambda_\Gamma$.

Lemma~\ref{l:hs-normal-form} below can be informally
explained as follows. We conjugate $\Op_h^{L_s}(\chi_-)$
by a Fourier integral operator whose underlying symplectomorphism $\varkappa_0^-$ 
`straightens out' the foliation $L_s$ (see~\eqref{e:straighten-out}),
resulting in the multiplication operator
by $\psi_-$ (times a pseudodifferential operator
which can be put into $\mathcal A_-$).
Similarly we conjugate $\Op_h^{L_u}(\chi_+)$ by a Fourier integral operator
whose underlying symplectomorphism $\varkappa_0^+$ `straightens out'
the foliation $L_u$, resulting in the multiplication operator
by $\psi_+\psi_0$. Following the above procedure for the product
$\Op_h^{L_s}(\chi_-)W\Op_h^{L_u}(\chi_+)$ also produces
a Fourier integral operator $\widetilde{\mathcal B}_\psi$ which
quantizes $\varkappa_0^-\circ(\varkappa_0^+)^{-1}$.
\begin{lemm}
  \label{l:hs-normal-form}
Let $(x_0,\xi_0)\in K\cap \{|\xi|_g=1\}$. Then there exists a neighborhood
$V\subset T^*M$ of $(x_0,\xi_0)$ such that for each
$W\in\Psi^{\comp}_h(M)$, $\WFh(W)\subset V$, we can write
$$
\begin{aligned}
\Op_h^{L_s}(\chi_-)W\Op_h^{L_u}(\chi_+)&=\mathcal A_- \widetilde{\mathcal A}\mathcal A_+
+\mathcal O(h^\infty)_{\mathcal D'\to C^\infty},\\
\widetilde{\mathcal A}&:=\psi_-(y;h)\widetilde{\mathcal B}_\psi
\psi_+(y;h)\psi_0(w;h)\widetilde\psi(hD_w)
\end{aligned}
$$
where $(w,y)$ denote coordinates on $\mathbb R^+_w\times\mathbb S^{n-1}_y$ and
\begin{itemize}
\item $\mathcal A_-:L^2(\mathbb R^+\times\mathbb S^{n-1})\to L^2(\Mext)$,
$\mathcal A_+:L^2(\Mext)\to L^2(\mathbb R^+\times\mathbb S^{n-1})$
are operators bounded uniformly in $h$ in operator norm;
\item $\psi_\pm\in C^\infty(\mathbb S^{n-1};[0,1])$ and for some constant $C_1$,
\begin{equation}
  \label{e:tight-supports}
\supp\psi_+\subset \Lambda_\Gamma(C_1h^\rho),\quad
\supp\psi_-\subset\Lambda_\Gamma(C_1h^{\rho'});
\end{equation}
\item $\psi_0\in C_0^\infty(\mathbb R^+;[0,1])$ and $\supp\psi_0\subset [1-C_1h^\rho,1+C_1h^\rho]$;
\item $\widetilde\psi\in C_0^\infty(\mathbb R;[0,1])$ is $h$-independent;
\item $\widetilde{\mathcal B}_\psi$ is the operator on $L^2(\mathbb R^+\times\mathbb S^{n-1})$
given by
$$
\widetilde{\mathcal B}_\psi v(w,y)=(2\pi h)^{1-n\over 2}\int_{\mathbb S^{n-1}}\Big|{y-y'\over 2}\Big|^{2i w/h}\psi(y,y')v(w,y')\,dy',
$$
where $|y-y'|$ denotes the Euclidean distance on the sphere $\mathbb S^{n-1}\subset\mathbb R^n$ and
$\psi\in C^\infty(\mathbb S^{n-1}\times\mathbb S^{n-1})$ is $h$-independent with
$\supp\psi\cap \{y=y'\}=\emptyset$.
\end{itemize}
\end{lemm}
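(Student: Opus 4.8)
The plan is to reduce, via the universal cover $\mathbb H^n\to M$, to an explicit computation in hyperbolic space, following the scheme of~\cite[\S4.4]{hgap}. Lifting $(x_0,\xi_0)$ to a covector in $T^*\mathbb H^n\setminus 0$ and using that the covering projection is a local symplectomorphism on cotangent bundles, it suffices to produce the normal form in a small neighborhood $V$ of $(x_0,\xi_0)$ upstairs. Using the ball model $\mathbb H^n=\mathbb B^n$ with boundary $\mathbb S^{n-1}$, I would identify an open subset of $T^*\mathbb H^n\setminus 0$ containing $(x_0,\xi_0)$ with an open subset of $\mathbb S^{n-1}_{\nu_-}\times\mathbb S^{n-1}_{\nu_+}\times\mathbb R_\tau\times\mathbb R^+_w$ by sending $(x,\xi)$ to (backward endpoint of its geodesic, forward endpoint, an arc-length parameter, $w=|\xi|_g$). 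In these coordinates $e^{tH_p}$ translates $\tau$, the leaves of $L_u$ are the sets $\{\nu_-=\mathrm{const},\ w=\mathrm{const}\}$, the leaves of $L_s$ are $\{\nu_+=\mathrm{const},\ w=\mathrm{const}\}$, and $\Gamma_+,\Gamma_-,K$ lift to $\{\nu_-\in\Lambda_\Gamma\}$, $\{\nu_+\in\Lambda_\Gamma\}$, $\{\nu_\pm\in\Lambda_\Gamma\}$ respectively (cf.~\cite[(4.11)]{hgap}).

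Next I would straighten the two foliations. By the Lagrangian Darboux theorem~\cite[Lemma~3.6]{hgap}, with the normalizations chosen compatibly with the above model, I would construct symplectomorphisms $\varkappa_0^-,\varkappa_0^+:V\to T^*(\mathbb R^+_w\times\mathbb S^{n-1}_y)$ taking $L_s$, resp.\ $L_u$, to the vertical foliation $L_0=\ker d(w,y)$, with the base coordinate $y$ equal to $\nu_+$ for $\varkappa_0^-$ and to $\nu_-$ for $\varkappa_0^+$ (and $w$ the energy for both), arranged so that the $y$-coordinate carries $\Lambda_\Gamma$ into a fixed coordinate patch of the sphere. The composition $\varkappa_0^-\circ(\varkappa_0^+)^{-1}$ then preserves $w$ and, at fixed $w$, is a symplectomorphism whose graph admits the two endpoint coordinates $y,y'$ as global coordinates; since these endpoints are always distinct the graph avoids $\{y=y'\}$, and a computation with the hyperbolic Busemann cocycle --- equivalently, with the identity that the Gromov product at the center of the ball of two boundary points equals $-\log(|y-y'|/2)$ --- identifies its generating function with $2w\log|(y-y')/2|$ up to fixed sign and scaling conventions, which is exactly the phase appearing in $\widetilde{\mathcal B}_\psi$. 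I expect this identification of the transition phase to be the principal obstacle: in~\cite{hgap} it is carried out for $n=2$ on the boundary circle, whereas here it must be redone on $\mathbb S^{n-1}$ while keeping the factor $|(y-y')/2|^{2iw/h}$ in closed form (this closed form is what makes the later Hilbert--Schmidt estimate transparent), and the two Darboux normalizations have to be fixed in a genuinely geometric rather than abstract manner so that their composition has precisely this phase.

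Finally I would quantize. Shrinking $V$ if necessary, choose elliptic Fourier integral operators $\mathcal B_\pm\in I^{\comp}_h(\varkappa_0^\pm)$ with microlocal inverses~\cite[Theorem~12.3]{e-z}, arranged so that $\mathcal B_\pm$ maps $L^2(\Mext)$ to $L^2(\mathbb R^+\times\mathbb S^{n-1})$. By the Egorov property of the Lagrangian-foliation calculus~\cite[Lemma~3.12]{hgap}, conjugation gives $\Op_h^{L_s}(\chi_-)=\mathcal B_-^{-1}\Op_h^{L_0}(\tilde\chi_-)\mathcal B_-+\mathcal O(h^\infty)_{\mathcal D'\to C^\infty}$ and $\Op_h^{L_u}(\chi_+)=\mathcal B_+^{-1}\Op_h^{L_0}(\tilde\chi_+)\mathcal B_++\mathcal O(h^\infty)_{\mathcal D'\to C^\infty}$ with $\tilde\chi_-=\chi_-\circ(\varkappa_0^-)^{-1}\in S^{\comp}_{L_0,\rho'}$, $\tilde\chi_+=\chi_+\circ(\varkappa_0^+)^{-1}\in S^{\comp}_{L_0,\rho}$; since $\WFh(W)\subset V$, the middle factor $\mathcal B_-W\mathcal B_+^{-1}$ is a Fourier integral operator quantizing $\varkappa_0^-\circ(\varkappa_0^+)^{-1}$, hence equals $\widetilde{\mathcal B}_\psi$ for an $h$-independent amplitude $\psi$ supported off $\{y=y'\}$ (built from the principal symbol of $W$, which may be normalized to $1$ near $K$, and from the cutoffs $\chi$ entering~\eqref{e:the-cutoffs}), composed with a classical elliptic pseudodifferential factor whose lower-order terms we absorb into $\mathcal A_+$. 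Because by~\eqref{e:the-cutoffs} the support of $\tilde\chi_-$ projects in $y$ into $\Lambda_\Gamma(C_1h^{\rho'})$, that of $\tilde\chi_+$ into $\Lambda_\Gamma(C_1h^\rho)$, its $w$-projection into $[1-C_1h^\rho,1+C_1h^\rho]$, and its $\eta_w$-projection into a fixed bounded set, I pick $h$-dependent cutoffs $\psi_-,\psi_+,\psi_0$ supported in these neighborhoods and equal to $1$ on slightly smaller ones, and an $h$-independent $\widetilde\psi$ elliptic on the $\eta_w$-support; then, using pseudolocality of the $L_0$-calculus, the concentration of the Schwartz kernel of $\Op_h^{L_0}(\tilde\chi_\pm)$ within $O(h^{1-})$ of the diagonal, and the $L^2$-boundedness of $\widetilde{\mathcal B}_\psi$ (an oscillatory integral operator with nondegenerate mixed Hessian $\partial^2_{yy'}\phi$ off the diagonal), I may insert $\psi_-(y)$ to the left of $\widetilde{\mathcal B}_\psi$ and $\psi_+(y)\psi_0(w)\widetilde\psi(hD_w)$ to its right at the cost of $\mathcal O(h^\infty)_{\mathcal D'\to C^\infty}$. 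Setting $\mathcal A_-:=\mathcal B_-^{-1}\Op_h^{L_0}(\tilde\chi_-)$ and $\mathcal A_+:=Q\,\Op_h^{L_0}(\tilde\chi_+)\mathcal B_+$, where $Q$ is the elliptic factor, which are bounded on $L^2$ uniformly in $h$ by~\cite[\S3]{hgap} and the $L^2$-continuity of $\mathcal B_\pm$, then assembles the product into $\mathcal A_-\widetilde{\mathcal A}\mathcal A_++\mathcal O(h^\infty)_{\mathcal D'\to C^\infty}$ with $\widetilde{\mathcal A}=\psi_-(y;h)\widetilde{\mathcal B}_\psi\psi_+(y;h)\psi_0(w;h)\widetilde\psi(hD_w)$, as required; all constants depend only on $(x_0,\xi_0)$, and no holomorphy in $\omega$ is involved.
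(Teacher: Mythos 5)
Your argument follows the same route as the paper: pass to the straightened coordinates via exact symplectomorphisms $\varkappa_0^\pm$ taking $L_u$ and $L_s$ to the vertical foliation, quantize by Fourier integral operators $\mathcal B_\pm$ and their inverses, identify the middle transition FIO as $\widetilde{\mathcal B}_\psi$ up to a classical elliptic factor, and insert the cutoffs $\psi_\pm,\psi_0,\widetilde\psi$ at the cost of $\mathcal O(h^\infty)$. The bookkeeping differences (you place $W$ inside the middle FIO and absorb the elliptic factor into $\mathcal A_+$, whereas the paper writes the middle factor as $\mathcal B_-\mathcal B_+'$ and absorbs the elliptic part $A$ into $\mathcal A_-$) are immaterial.

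However, the step you single out as ``the principal obstacle'' is not an obstacle at all: the identification of the transition FIO with $\widetilde{\mathcal B}_\psi$, including the explicit phase $\bigl|\tfrac{y-y'}{2}\bigr|^{2iw/h}$ on $\mathbb S^{n-1}$, is precisely the content of~\cite[Lemma~4.9]{hgap}, which the paper cites and which already treats general $\Gamma\backslash\mathbb H^n$, not just $n=2$. Similarly the exact symplectomorphisms straightening $L_s$ and $L_u$ compatibly with the geodesic-endpoint coordinates are given by~\cite[Lemma~4.7]{hgap}. Your proposal to re-derive these from scratch via the ball model and the Busemann cocycle would reproduce that lemma; it is not needed and carries real risk of a sign/normalization slip in fixing the two Darboux charts so that the composed generating function comes out exactly right. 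So there is no gap in the approach, but you should replace ``this must be redone on $\mathbb S^{n-1}$'' with a citation.

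One further point: the justification for inserting $\psi_-(y)$ and $\psi_+(y)\psi_0(w)\widetilde\psi(hD_w)$ at $\mathcal O(h^\infty)$ cost is left somewhat heuristic in your write-up (``pseudolocality of the $L_0$-calculus'' plus ``concentration of the kernel within $O(h^{1-})$ of the diagonal''). The paper makes this precise using the microlocal vanishing notion of~\cite[Definition~3.13]{hgap} together with~\cite[Lemma~4.3 and~(4.44)]{hgap} to show that the conjugated operators $A_+$ and $A_-A$ are $\mathcal O(h^\infty)$ along sequences with $d(y_j,\Lambda_\Gamma)+|w_j-1|\gtrsim h_j^\rho$ (resp. $d(y_j,\Lambda_\Gamma)\gtrsim h_j^{\rho'}$), and then constructs $\psi_\pm,\psi_0$ via~\cite[Lemma~3.3]{hgap}. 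You should either invoke that machinery explicitly or spell out the kernel estimate; as stated the claim is plausible but not proved.
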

\begin{proof}
We use the theory of Fourier integral operators quantizing exact
symplectomorphisms, see~\cite[\S2.2]{hgap}.
Using~\cite[Lemma~4.7]{hgap} as in~\cite[(4.57)]{hgap}, we construct
exact symplectomorphisms
$$
\varkappa_0^\pm:U\to U'_\pm,\quad
U\subset T^*M\setminus 0,\quad
U'_\pm\subset T^*(\mathbb R^+\times\mathbb S^{n-1}),
$$
where $U$ is a small neighborhood of $(x_0,\xi_0)$
and $U'_\pm$ are small neighborhoods of
$$
(1,y_0^\pm,\theta_0^\pm,\eta_0^\pm):=\varkappa_0^\pm(x_0,\xi_0).
$$
Here $(w,y,\theta,\eta)$ are the canonical coordinates on $T^*(\mathbb R^+_w\times\mathbb S^{n-1}_y)$.
The maps $\varkappa_0^\pm$ in particular straighten out the weak stable/unstable
foliations (see~\cite[(4.42)]{hgap}):
\begin{equation}
  \label{e:straighten-out}
(\varkappa^+_0)_*L_u=(\varkappa^-_0)_*L_s=L_V:=\ker(dw)\cap\ker(dy).
\end{equation}
Let $V\Subset U$ be a small neighborhood of $(x_0,\xi_0)$ and take Fourier integral operators
$$
\mathcal B_\pm\in I^{\comp}_h(\varkappa_0^\pm),\quad
\mathcal B'_\pm\in I^{\comp}_h((\varkappa_0^\pm )^{-1})
$$
which quantize $\varkappa_0^\pm$ near $V\times\varkappa_0^\pm(V)$
in the sense of~\cite[(2.13)]{hgap}:
$$
\begin{aligned}
\mathcal B'_\pm\mathcal B_\pm&=1+\mathcal O(h^\infty)\quad\text{microlocally near }V,\\
\mathcal B_\pm\mathcal B'_\pm&=1+\mathcal O(h^\infty)\quad\text{microlocally near }\varkappa_0^\pm(V).
\end{aligned}
$$
Recalling the assumption $\WFh(W)\subset V$, we now have
\begin{equation}
  \label{e:lili1}
\Op_h^{L_s}(\chi_-)W\Op_h^{L_u}(\chi_+)=\mathcal B'_-A_-BA_+\mathcal B_++\mathcal O(h^\infty)_{\mathcal D'\to C^\infty},
\end{equation}
where
$$
A_-=\mathcal B_-\Op_h^{L_s}(\chi_-)\mathcal B_-',\quad
A_+=\mathcal B_+ W\Op_h^{L_u}(\chi_+)\mathcal B_+',\quad
B=\mathcal B_-\mathcal B_+'.
$$
We have $B\in I^{\comp}_h(\widehat\varkappa^{-1})$, where
$$
\widehat\varkappa:T^*(\mathbb R^+\times\mathbb S^{n-1})\to T^*(\mathbb R^+\times\mathbb S^{n-1})
$$
is the symplectomorphism defined in~\cite[(4.45)]{hgap}, extending
$\varkappa_0^+\circ (\varkappa_0^-)^{-1}$.
By~\cite[Lemma~4.9]{hgap},
\begin{equation}
  \label{e:lili2}
B=A\widetilde{\mathcal B}_\psi+\mathcal O(h^\infty)_{\mathcal D'\to C_0^\infty},
\end{equation}
for some $A\in\Psi^{\comp}_h(\mathbb R^+\times\mathbb S^{n-1})$ and $h$-independent
$\psi\in C^\infty(\mathbb S^{n-1}\times\mathbb S^{n-1})$ such that $\supp\psi\cap \{y=y'\}=\emptyset$.

By~\eqref{e:chi-classes}, ~\eqref{e:straighten-out}, and the properties of $\Psi^{\comp}_{h,L,\rho}$ calculus
discussed in~\cite[\S3.3]{hgap},
$$
A_+\in\Psi^{\comp}_{h,L_V,\rho}(T^*(\mathbb R^+\times\mathbb S^{n-1})),\quad
A_-A\in\Psi^{\comp}_{h,L_V,\rho'}(T^*(\mathbb R^+\times\mathbb S^{n-1})).
$$
As in the discussion following~\cite[(4.59)]{hgap},
by~\eqref{e:the-cutoffs} and~\cite[Lemma~4.3 and~(4.44)]{hgap}
there exists a constant $C_1>0$ such that $A_+=\mathcal O(h^\infty)$ in the sense of~\cite[Definition~3.13]{hgap}
microlocally along each sequence
$(w_j,y_j,\theta_j,\eta_j,h_j)$ such that
$$
d(y_j,\Lambda_\Gamma)+|w_j-1|\geq C_1h_j^\rho/2
$$
Similarly, $A_-A=\mathcal O(h^\infty)$ microlocally along each sequence
such that
$$
d(y_j,\Lambda_\Gamma)\geq C_1h_j^{\rho'}/2.
$$
Using~\cite[Lemma~3.3]{hgap},
take functions $\psi_\pm(y;h),\psi_0(w;h)$ satisfying the properties in the statement
of this Lemma and such that
$$
\begin{aligned}
\supp(1-\psi_+)\cap \Lambda_\Gamma(C_1 h^\rho/2)=\emptyset,&\quad
|\partial^\alpha_y\psi_+|\leq C_\alpha h^{-\rho|\alpha|};\\
\supp(1-\psi_-)\cap \Lambda_\Gamma(C_1 h^{\rho'}/2)=\emptyset,&\quad
|\partial^\alpha_y\psi_-|\leq C_\alpha h^{-\rho'|\alpha|};\\
\supp(1-\psi_0)\cap [1-C_1h^{\rho}/2,1+C_1h^{\rho}/2]=\emptyset,&\quad
|\partial^\alpha_w \psi_0|\leq C_\alpha h^{-\rho|\alpha|}.
\end{aligned}
$$
Since $A_+$ is compactly microlocalized, there exists $R>0$ such that
$\WFh(A_+)\subset\{|\theta|<R\}$, where $\theta$ is the momentum corresponding
to $w$.
Take $h$-independent
$$
\widetilde\psi\in C_0^\infty(\mathbb R),\quad 
\widetilde\psi=1\text{ near }[-R,R].
$$
Arguing as in the proof of~\cite[(4.51)]{hgap}, we see that
\begin{align}
  \label{e:lili3}
(1-\psi_+(y;h)\psi_0(w;h)\widetilde\psi(hD_w))A_+&=\mathcal O(h^\infty)_{\mathcal D'\to C_0^\infty},\\
  \label{e:lili4}
A_-A(1-\psi_-(y;h))&=\mathcal O(h^\infty)_{\mathcal D'\to C_0^\infty}.
\end{align}
It remains to put
$$
\mathcal A_-:=\mathcal B_-'A_-A,\quad
\mathcal A_+:=A_+\mathcal B_+
$$
and use~\eqref{e:lili1}--\eqref{e:lili4}.
\end{proof}
We next estimate the operator appearing in Lemma~\ref{l:hs-normal-form}:
\begin{lemm}
  \label{l:hs-inside}
Let $\widetilde{\mathcal A}$ be the operator on $L^2(\mathbb R^+\times\mathbb S^{n-1})$ defined
in Lemma~\ref{l:hs-normal-form}. Then
$$
\|\widetilde{\mathcal A}\|^2_{\HS(L^2)}\leq Ch^{-n+\rho(n-\delta)+\rho'(n-1-\delta)}.
$$
\end{lemm}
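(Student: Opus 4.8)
The plan is to compute the Schwartz kernel of $\widetilde{\mathcal A}$ on $L^2(\mathbb R^+_w\times\mathbb S^{n-1}_y)$ explicitly and then bound its $L^2$ norm by a direct integration. Since $\widetilde{\mathcal B}_\psi$ does not mix the $w$ variable (for each fixed $w$ it acts on $v(w,\cdot)$), while $\psi_0(w;h)\widetilde\psi(hD_w)$ acts only in $w$ and $\psi_\pm(y;h)$ only in $y$, the routine composition of kernels collapses all the extra integrations and gives
\[
\mathcal K(w,y;w',y')=(2\pi h)^{1-n\over 2}\,\psi_-(y)\,\Big|{y-y'\over 2}\Big|^{2iw/h}\psi(y,y')\,\psi_+(y')\,\psi_0(w)\,K_{\widetilde\psi}(w,w'),
\]
where $K_{\widetilde\psi}(w,w')=(2\pi h)^{-1}\int_{\mathbb R}e^{i(w-w')\theta/h}\widetilde\psi(\theta)\,d\theta=h^{-1}f\big((w-w')/h\big)$ with $f$ a fixed Schwartz function (essentially the Fourier transform of $\widetilde\psi$). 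Justifying this formula is the first step; after that everything is compactly supported or Schwartz, so all the integrals below converge.

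The crucial observation is that $\big|\,|(y-y')/2|^{2iw/h}\,\big|=1$: on $\supp\psi$ we have $y\neq y'$, so the base is a positive real number, and $2w/h$ is real. Therefore $|\mathcal K(w,y;w',y')|^2$ splits as a product of a function of $(y,y')$ and a function of $(w,w')$, and
\[
\|\widetilde{\mathcal A}\|^2_{\HS(L^2)}=(2\pi h)^{1-n}\Big(\int_{\mathbb S^{n-1}}\int_{\mathbb S^{n-1}}\psi_-(y)^2|\psi(y,y')|^2\psi_+(y')^2\,dy'\,dy\Big)\Big(\int_{\mathbb R^+}\int_{\mathbb R^+}\psi_0(w)^2|K_{\widetilde\psi}(w,w')|^2\,dw'\,dw\Big).
\]
I would then estimate the two factors separately. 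For the $w$-factor, $\int_{\mathbb R^+}|K_{\widetilde\psi}(w,w')|^2\,dw'\le h^{-2}\int_{\mathbb R}|f((w-w')/h)|^2\,dw'=h^{-1}\|f\|_{L^2}^2$, and $\psi_0$ is supported in an interval of length $\mathcal O(h^\rho)$ with $0\le\psi_0\le1$, so the $w$-factor is $\mathcal O(h^{\rho-1})$. For the $y$-factor, since $0\le\psi_\pm\le1$ and $\psi$ is a fixed bounded function, it is $\le C\operatorname{vol}(\supp\psi_-)\operatorname{vol}(\supp\psi_+)$; by~\eqref{e:tight-supports} and the standard bound $\operatorname{vol}_{\mathbb S^{n-1}}(\Lambda_\Gamma(\alpha))\le C\alpha^{n-1-\delta}$ on a neighborhood of the limit set, this is $\mathcal O\big(h^{\rho'(n-1-\delta)}h^{\rho(n-1-\delta)}\big)$.

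Combining the three factors, $\|\widetilde{\mathcal A}\|^2_{\HS}=\mathcal O\big(h^{1-n}\cdot h^{\rho-1}\cdot h^{(\rho+\rho')(n-1-\delta)}\big)=\mathcal O\big(h^{-n+\rho(n-\delta)+\rho'(n-1-\delta)}\big)$, which is the asserted bound. The only genuinely nontrivial ingredient is the Minkowski-type volume estimate $\operatorname{vol}(\Lambda_\Gamma(\alpha))=\mathcal O(\alpha^{n-1-\delta})$ — this is where the dimension $\delta$ of the limit set enters — and I would invoke it as a known consequence of the Ahlfors--David $\delta$-regularity of $\Lambda_\Gamma$; everything else is bookkeeping with the explicit kernel. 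So the main point is conceptual rather than technical: recognizing that once the foliations have been straightened out in Lemma~\ref{l:hs-normal-form}, the Hilbert--Schmidt norm decouples into an elementary one-dimensional $w$-integral and a ``volume of a neighborhood of the limit set'' $y$-integral.
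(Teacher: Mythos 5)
Your argument reproduces the paper's proof: both compute the same explicit Schwartz kernel (your normalization $(2\pi h)^{(1-n)/2}\cdot(2\pi h)^{-1}=(2\pi h)^{-(n+1)/2}$ agrees with the paper's), observe that the oscillatory factor $|(y-y')/2|^{2iw/h}$ has modulus one so the kernel factorizes, bound the $w$-integral by $\mathcal O(h^{\rho-1})$ via the Schwartz decay of $\mathcal F(\widetilde\psi)$ and the $h^\rho$-support of $\psi_0$, and bound the $y$-integral by the Minkowski content $\mathcal O(h^{(\rho+\rho')(n-1-\delta)})$ of the neighborhoods of $\Lambda_\Gamma$. The only stylistic difference is that you cite Ahlfors--David regularity for the volume bound while the paper quotes~\cite[(1.5)]{hgap}, which records the same estimate.
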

\begin{proof}
A direct calculation shows that
$$
\widetilde{\mathcal A}v(w,y)=\int_{\mathbb R^+\times \mathbb S^{n-1}} \mathcal K(w,y,w',y')v(w',y')\,dw'dy',
$$
where the Schwartz kernel $\mathcal K$ is given by
$$
\begin{gathered}
\mathcal K(w,y,w',y')=(2\pi h)^{-{n+1\over 2}}\Big|{y-y'\over 2}\Big|^{2iw/h}\psi(y,y')\psi_-(y;h)\psi_+(y';h)\psi_0(w;h)\\
\cdot\int_{\mathbb R}e^{{i\over h}(w-w')\theta}
\widetilde\psi(\theta)\,d\theta.
\end{gathered}
$$
Using the nonsemiclassical Fourier transform
$\mathcal F(\widetilde\psi)\in \mathscr S(\mathbb R)$, we write
$$
\begin{gathered}
|\mathcal K(w,y,w',y')|\leq C h^{-{n+1\over 2}}
\psi_-(y;h)\psi_+(y';h)\psi_0(w;h)\Big|\mathcal F(\widetilde\psi)\Big({w'-w\over h}\Big)\Big|.
\end{gathered}
$$
Since $\supp \psi_0\subset [1-C_1h^\rho,1+C_1h^\rho]$, we have
$$
\int_{\mathbb R^+\times\mathbb R^+} |\mathcal K(w,y,w',y')|^2\,dwdw'\leq Ch^{\rho-n}\psi_-(y;h)^2\psi_+(y';h)^2.
$$
Now, by~\eqref{e:tight-supports} and~\cite[(1.5)]{hgap}, we have the Lebesgue measure bounds
$$
\mu_L(\supp\psi_+)\leq Ch^{\rho(n-1-\delta)},\quad
\mu_L(\supp\psi_-)\leq Ch^{\rho'(n-1-\delta)}.
$$
Therefore,
$$
\|\widetilde{\mathcal A}\|_{\HS(L^2)}^2=\int_{(\mathbb R^+\times\mathbb S^{n-1})^2} |\mathcal K(w,y,w',y')|^2
\,dwdydw'dy'\leq Ch^{-n+\rho(n-\delta)+\rho'(n-1-\delta)},
$$
finishing the proof.
\end{proof}
We now finish the proof of Proposition~\ref{l:hs}. By~\eqref{e:the-cutoffs}, we have
$$
\begin{aligned}
\WFh(\Op_h^{L_u}(\chi_+))\ &\subset\ \Gamma_+\cap \{|\xi|_g=1\}\cap \supp\chi,\\
\WFh(\Op_h^{L_s}(\chi_-))\ &\subset\ \Gamma_-\cap\supp \chi.
\end{aligned}
$$
Indeed, if $a\in C_0^\infty(T^*M)$ is $h$-independent and
$\supp a\cap \Gamma_+\cap \{|\xi|_g=1\}\cap \supp\chi=\emptyset$, then
for $h$ small enough we have $\supp a\cap\supp\chi_+=\emptyset$
and thus $\Op_h^{L_u}(\chi_+)\Op_h(a)=\mathcal O(h^\infty)_{\mathcal D'\to C^\infty}$.
The case of $\chi_-$ is handled similarly.

It follows that for $\WFh(W)\cap K\cap \{|\xi|_g=1\}=\emptyset$,
the left-hand side of~\eqref{e:main-hs-estimate}
is $\mathcal O(h^\infty)$. Combining this
with a partition of unity argument, we see that
it suffices to consider the case of $W$ satisfying the assumptions
of Lemma~\ref{l:hs-normal-form}. By Lemmas~\ref{l:hs-normal-form}
and~\ref{l:hs-inside}, we obtain~\eqref{e:main-hs-estimate}.

\section{Proof of Theorems~\ref{t:ifwl} and~\ref{t:gapbound}}
  \label{s:proofs}

We now combine Propositions~\ref{l:main-parametrix} and~\ref{l:hs} with
Jensen's inequality and Fredholm determinants 
(which are both standard tools in resonance counting bounds) to obtain
\begin{proof}[Proof of Theorem~\ref{t:ifwl}]
Let $\mathcal X,\mathcal Y$ be the Hilbert spaces
and $\mathcal P_h(\omega):\mathcal X\to\mathcal Y$ the operator
introduced in~\S\ref{s:vasy}. Fix $\beta\geq 0$, $\beta_0>\beta$, define
$\Omega$ by~\eqref{e:Omega}, and put
$$
\Omega':=[1,1+h]+ih[-\beta,1/2]\ \Subset\ \Omega.
$$
Let $m\in\mathbb R$. Putting $h:=R^{-1}$, we see that the bound on resonances
$$
\mathcal N(R,\beta)\leq CR^{m},\quad R\to \infty
$$
follows from the following bound on the poles of $\mathcal P_h(\omega)^{-1}$,
counted with multiplicities:
\begin{equation}
  \label{e:poleb-1}
\#\{\omega\text{ pole of }\mathcal P_h(\omega)^{-1},\ \omega\in\Omega'\}\leq Ch^{-m}.
\end{equation}
Here we use~\cite[Theorem~2.1]{Gohberg-Sigal} (see also~\cite[(4.3)]{nhp}) to define the multiplicity of a pole $\omega_1$ as
\begin{equation}
  \label{e:multiplicities}
{1\over 2\pi i}\tr \oint_{\omega_1} \mathcal P_h(\omega)^{-1}\partial_\omega \mathcal P_h(\omega)\,d\omega,
\end{equation}
where the integral is taken over a contour enclosing $\omega_1$, but no other poles
of $\mathcal P_h(\omega)^{-1}$. See~\cite[\S C.4]{dizzy} for an introduction to Gohberg--Sigal theory which
we use here.

Fix $\rho,\rho'\in (0,1),\varepsilon_0>0$ to be chosen later and let $\mathcal A(\omega)$ be the operator introduced in~\eqref{e:aop}.
The operator
$(1-\mathcal A(\omega)^2)^{-1}:\mathcal X\to\mathcal X$ is meromorphic in $\omega\in\Omega$ with poles of finite rank
by~\cite[Theorem~D.4]{e-z}, since $\mathcal A(\omega)^2$ is compact (as $\mathcal A(\omega)$ is a
Hilbert--Schmidt operator) and as follows from~\eqref{e:a-invertible}
below, $1-\mathcal A(\omega_0)^2$ is invertible for some $\omega_0\in\Omega$.
By Proposition~\ref{l:main-parametrix},
$$
\mathcal P_h(\omega)^{-1}=(1-\mathcal A(\omega)^2)^{-1}(1+\mathcal A(\omega))\mathcal Z(\omega):\mathcal Y\to\mathcal X.
$$
Therefore, \eqref{e:poleb-1} follows from the bound (counting poles with multiplicities; see~\cite[Theorem~C.8]{dizzy})
\begin{equation}
  \label{e:poleb-2}
\#\{\omega\text{ pole of }(1-\mathcal A(\omega)^2)^{-1},\ \omega\in\Omega'\}\leq Ch^{-m}.
\end{equation}
By Proposition~\ref{l:hs}, $\mathcal A(\omega)$ is a Hilbert--Schmidt operator on $\mathcal X$ for $\omega\in\Omega$,
therefore (see for instance~\cite[\S B.4]{dizzy}) the operator
$\mathcal A(\omega)^2:\mathcal X\to\mathcal X$ is trace class.
By~\cite[\S B.5]{dizzy} we may define the determinant
$$
F(\omega):=\det(1-\mathcal A(\omega)^2),\ \omega\in\Omega,
$$
which is a holomorphic function. By~\eqref{e:multiplicities} and since
$$
{F'(\omega)\over F(\omega)}=-\tr\big((1-\mathcal A(\omega)^2)^{-1}\partial_\omega(\mathcal A(\omega)^2)\big),
$$
we see that \eqref{e:poleb-2} follows from the following bound
(counting zeroes of $F(\omega)$ with multiplicities)
\begin{equation}
  \label{e:poleb-3}
\#\{\omega\text{ zero of }F(\omega),\quad
\omega\in\Omega'\}\leq Ch^{-m}.
\end{equation}
By~\eqref{e:hs}, we have the trace class norm bound
\begin{equation}
  \label{e:tr}
\|\mathcal A(\omega)^2\|_{\TR}\leq \|\mathcal A(\omega)\|_{\HS}^2\leq
Ch^{-n+\rho(n-\delta)+\rho'(n-1-\delta-2\beta_0-2\varepsilon_0)},\quad
\omega\in\Omega.
\end{equation}
This implies (see for instance~\cite[\S B.5]{dizzy})
\begin{equation}
  \label{e:det1}
|F(\omega)|\leq \exp(Ch^{-n+\rho(n-\delta)+\rho'(n-1-\delta-2\beta_0-2\varepsilon_0)}),\quad
\omega\in\Omega.
\end{equation}
On the other hand, we see immediately from~\eqref{e:aop}, \eqref{e:para-bounds-J},
and the fact that $\Op_h^{L_u}(\chi_+)$, $\Op_h^{L_s}(\chi_-)$, and $W$ are bounded on $\mathcal X$
uniformly in $h$ that
$$
\|\mathcal A(\omega)\|_{\mathcal X\to\mathcal X}\leq Ch^{\rho'(h^{-1}\Im\omega-\varepsilon_0)}.
$$
Taking $\varepsilon_0<1/3$, we see that for $h$ small enough,
\begin{equation}
  \label{e:a-invertible}
\|\mathcal A(\omega_0)^2\|_{\mathcal X\to\mathcal X}\leq {1\over 2},\quad
\omega_0:=1+{ih\over 3}\in\Omega'.
\end{equation}
We have
$$
\begin{aligned}
(1-\mathcal A(\omega_0)^2)^{-1}&=1+\mathcal A(\omega_0)^2(1-\mathcal A(\omega_0)^2)^{-1},\\
\|\mathcal A(\omega_0)^2(1-\mathcal A(\omega_0)^2)^{-1}\|_{\TR}&\leq \|\mathcal A(\omega_0)^2\|_{\TR}\cdot
\|(1-\mathcal A(\omega_0)^2)^{-1}\|_{\mathcal X\to\mathcal X}\\
&\leq Ch^{-n+\rho(n-\delta)+\rho'(n-1-\delta-2\beta_0-2\varepsilon_0)}.
\end{aligned}
$$
By multiplicativity of determinants, we get
\begin{equation}
  \label{e:det2}
|F(\omega_0)|^{-1}=\det\big((1-\mathcal A(\omega_0)^2)^{-1}\big)\leq
\exp(Ch^{-n+\rho(n-\delta)+\rho'(n-1-\delta-2\beta_0-2\varepsilon_0)}).
\end{equation}
By Jensen's inequality (see for instance the proof of~\cite[Theorem~2]{fwl}),
the determinant bounds
\eqref{e:det1} and~\eqref{e:det2} together imply the counting bound~\eqref{e:poleb-3}
with
$$
m=n-\rho(n-\delta)-\rho'(n-1-\delta-2\beta_0-2\varepsilon_0).
$$
To show~\eqref{e:ifwl}, it remains to choose $\rho,\rho',\beta_0,\varepsilon_0$ which
yield the following values of $m$:
\begin{itemize}
\item $m\leq 2\delta+2\beta+1-n+\varepsilon$: choose
$$
\rho=\rho'=1-\varepsilon_0,\quad
\beta_0=\beta+\varepsilon_0
$$
and take $\varepsilon_0>0$ small enough depending on $\varepsilon$;
\item $m\leq \delta+\varepsilon$: choose
$$
\rho=1-\varepsilon_0,\quad \rho'=\varepsilon_0,\quad \beta_0=\beta+\varepsilon_0
$$
and take $\varepsilon_0>0$ small enough depending on $\varepsilon$.\qedhere
\end{itemize}
\end{proof}
We finally give the proof of the resolvent bound in the Patterson--Sullivan gap:
\begin{proof}[Proof of Theorem~\ref{t:gapbound}]
As in~\cite[(4.16)]{hgap}, it suffices to show the bound
$$
\|\mathcal P_h(\omega)^{-1}\|_{\mathcal Y\to\mathcal X}\leq Ch^{-1-c(\beta,\delta)-\varepsilon},\quad
\omega\in \Omega:=[1-2h,1+2h]+ih[-\beta,1].
$$
Take $\varepsilon_0>0$ small enough to be chosen later and choose
(here the choice of $\rho'$ is explained by~\eqref{e:alpha-pos} below)
$$
\rho=1-\varepsilon_0,\quad
\rho'={\delta+\sqrt{\varepsilon_0}\over n-1-\delta-2\beta},\quad
\beta_0:=\beta.
$$
Here $\rho\in (0,1)$ for small enough $\varepsilon_0$ since
$$
\beta+\delta<{n-1\over 2}.
$$
Let $\mathcal A(\omega)$ be the operator defined in~\eqref{e:aop}.
Estimating the operator norm of this operator by its Hilbert--Schmidt
norm and using~\eqref{e:hs}, we get
$$
\|\mathcal A(\omega)\|_{\mathcal X\to\mathcal X}\leq Ch^{\alpha/2},\quad
\omega\in\Omega
$$
where
\begin{equation}
  \label{e:alpha-pos}
\alpha=-n+\rho(n-\delta)+\rho'(n-1-\delta-2\beta_0-2\varepsilon_0)=\sqrt{\varepsilon_0}+\mathcal O(\varepsilon_0)
\end{equation}
is positive for $\varepsilon_0$ small enough. Then for $h$ small enough,
\begin{equation}
  \label{e:a-nice}
\|(1-\mathcal A(\omega))^{-1}\|_{\mathcal X\to\mathcal X}\leq C.
\end{equation}
By~\eqref{e:main-parametrix}, we have
$$
\mathcal P_h(\omega)^{-1}=(1-\mathcal A(\omega))^{-1}\mathcal Z(\omega):\mathcal Y\to\mathcal X.
$$
By~\eqref{e:para-bounds-Z} and~\eqref{e:a-nice}, we get
$$
\|\mathcal P_h(\omega)^{-1}\|_{\mathcal Y\to\mathcal X}\leq Ch^{-1-\tilde c},\quad
\omega\in\Omega,
$$
where, with $c(\beta,\delta)$ given by~\eqref{e:c-value},
$$
\tilde c=(\rho+\rho')(\beta+\varepsilon_0)=c(\beta,\delta)+\mathcal O(\sqrt{\varepsilon_0}).
$$
By choosing $\varepsilon_0$ small enough depending on $\varepsilon$, we can make
$\tilde c\leq c(\beta,\delta)+\varepsilon$, finishing the proof.
\end{proof}

\appendix
\section*{Appendix: Numerical experiments\\
with David Borthwick and Tobias Weich}
\setcounter{section}{1}
\setcounter{equation}{0}

In this Appendix we compare the upper bound on the density of resonances
obtained in Theorem~\ref{t:ifwl} to numerical computations of the resonance density
for several explicit examples of convex co-compact hyperbolic surfaces. 

\subsection{Examples of hyperbolic surfaces}

Any convex co-compact hyperbolic surface can be obtained as a quotient of the
hyperbolic upper half-plane
$$
\mathbb H^2 = \SL(2,\mathbb R)/\SO(2)
$$
by a classical 
Schottky group~\cite{Button}. Such a Schottky group is a discrete subgroup 
$\Gamma\subset \SL(2,\mathbb{R})$ freely generated by $r\geq 1$
hyperbolic elements $g_1,\ldots, g_r\in \SL(2,\mathbb{R})$ which
fulfill mapping conditions on a set of 
disjoint disks
$$
D_1,\ldots D_{2r}\ \subset\ \mathbb C,
$$
with centers on $\partial \mathbb H^2$.  
In particular $g_j$ maps the interior of $D_j$ precisely to the exterior of $D_{j+r}$.  
We refer to \cite[\S15.1]{BorthwickBook} for a detailed introduction. 

In the simplest case $r=1$, the group is cyclic and the quotient surface is a hyperbolic cylinder.  
For these cases the resonances spectrum is explicitly known \cite[\S5.1]{BorthwickBook}.  
However, as $\delta = 0$ for these elementary surfaces, they are not of interest
for the improved upper bounds of Theorem~\ref{t:ifwl}.

The simplest nontrivial 
case are the surfaces with $r=2$ generators, which all have $\delta >0$. 
There exist only two topological types of these surfaces,
the three-funnel surfaces and the funneled tori (see Figure~\ref{surf_pic.fig}). The moduli spaces
of these two topological types of Schottky surfaces are in both cases three-dimensional. 
In the case of the three-funnel Schottky surfaces the parameters
of the moduli space can be chosen to be $(l_1,l_2,l_3) \in (\mathbb R^+)^3$. These
numbers coincide with the lengths of the three simple closed geodesics that bound 
the funnels.  We denote these surfaces by $X(l_1,l_2,l_3)$. 
For the funneled tori, the parameters can be chosen to be 
$(l_1,l_2,\phi) \in (\mathbb R^+)^2\times (0,\pi)$, consisting of two lengths of simple 
closed geodesics $\phi$ the angle between them (see Figure~\ref{surf_pic.fig}).
These surfaces will be denoted by $Y(l_1,l_2,\phi)$.
\begin{figure}
\begin{tabular}{ccc}
\begin{overpic}[scale=.8]{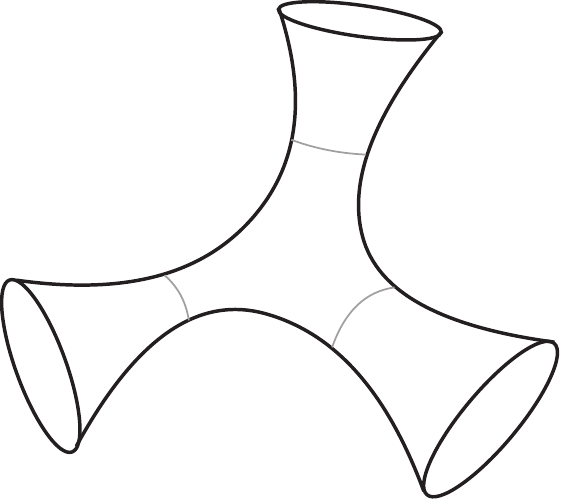}
\put(25,43){$l_1$}
\put(67,57){$l_2$}
\put(55,20){$l_3$}
\end{overpic} &&
\quad
\begin{overpic}[scale=.8]{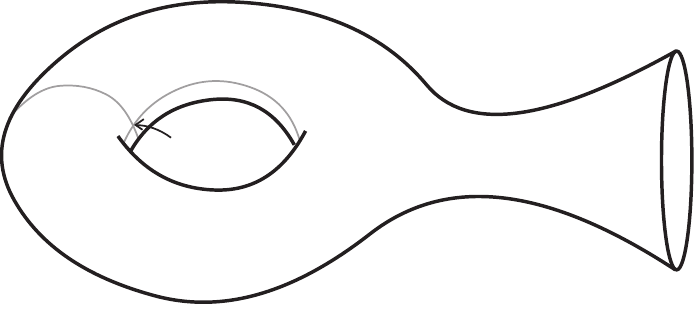}
\put(8,26){$l_1$}
\put(40,31){$l_2$}
\put(24,21){$\phi$}
\end{overpic}\\
$X(l_1,l_2,l_3)$ && $Y(l_1,l_2,\phi)$
\end{tabular}
\caption{Schottky surfaces with 2 generators: the three-funnel surfaces
and the funneled tori.}\label{surf_pic.fig}
\end{figure}

\subsection{Numerical resonance calculation via dynamical zeta functions}
\label{s:numerical-method}

\begin{figure}
\includegraphics[width=6.25in]{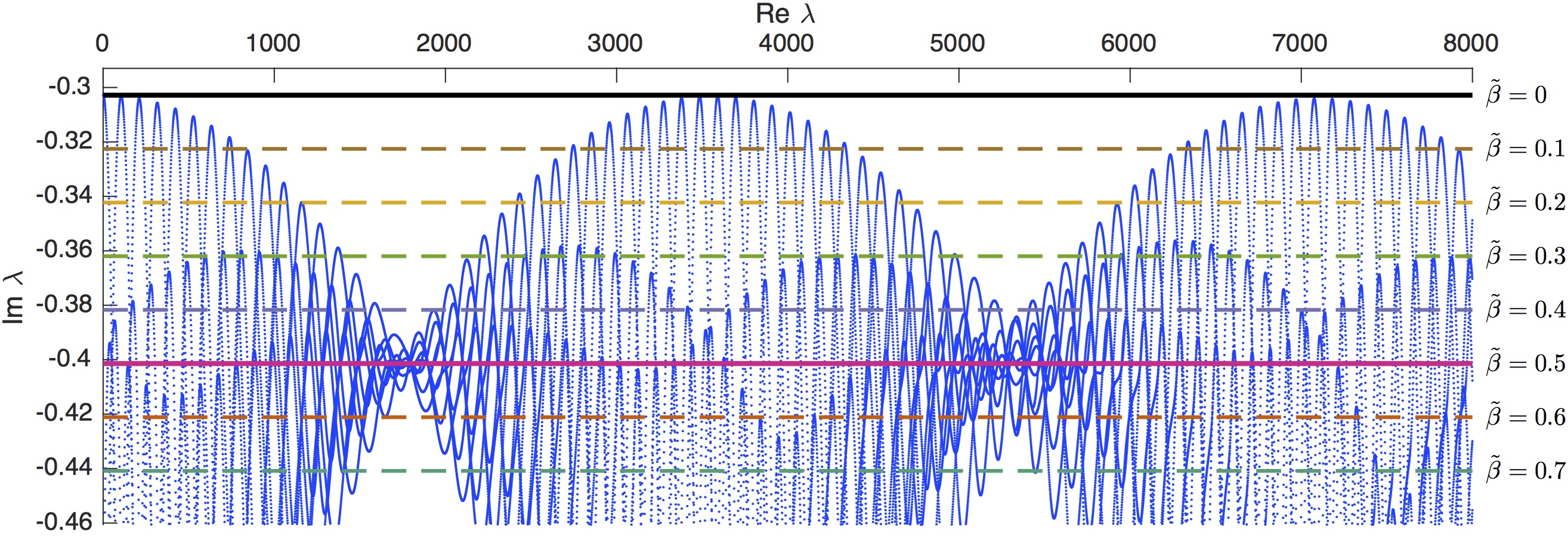}\\
\includegraphics[width=6.25in]{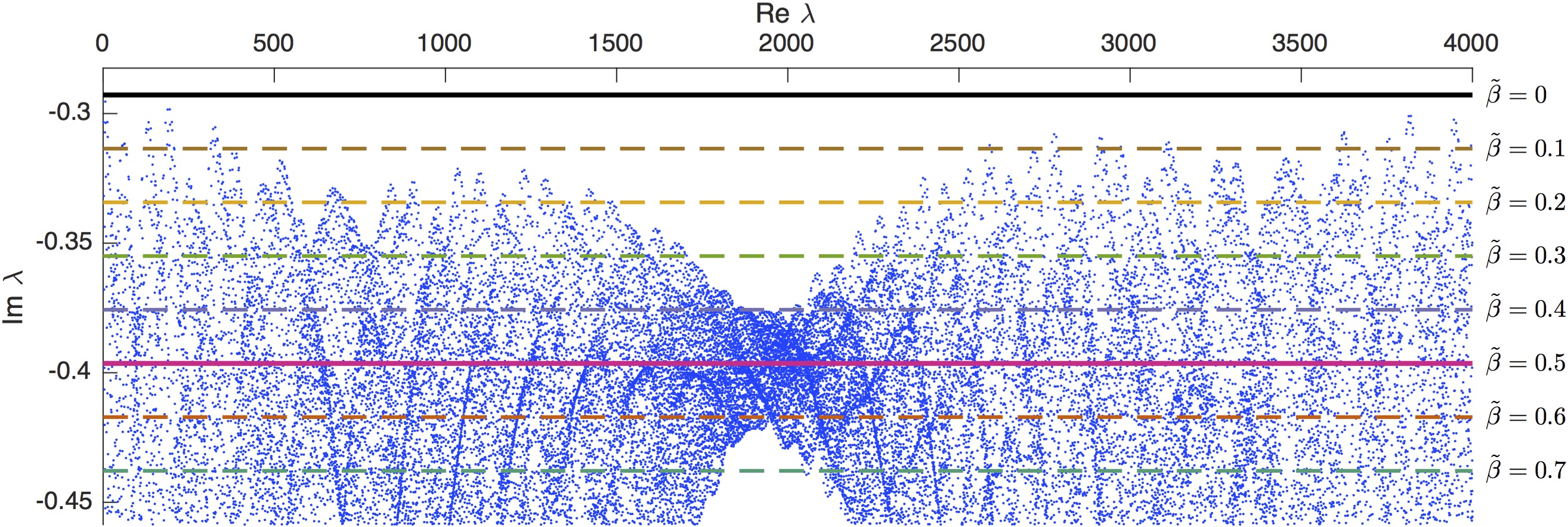}\\
\includegraphics[width=6.25in]{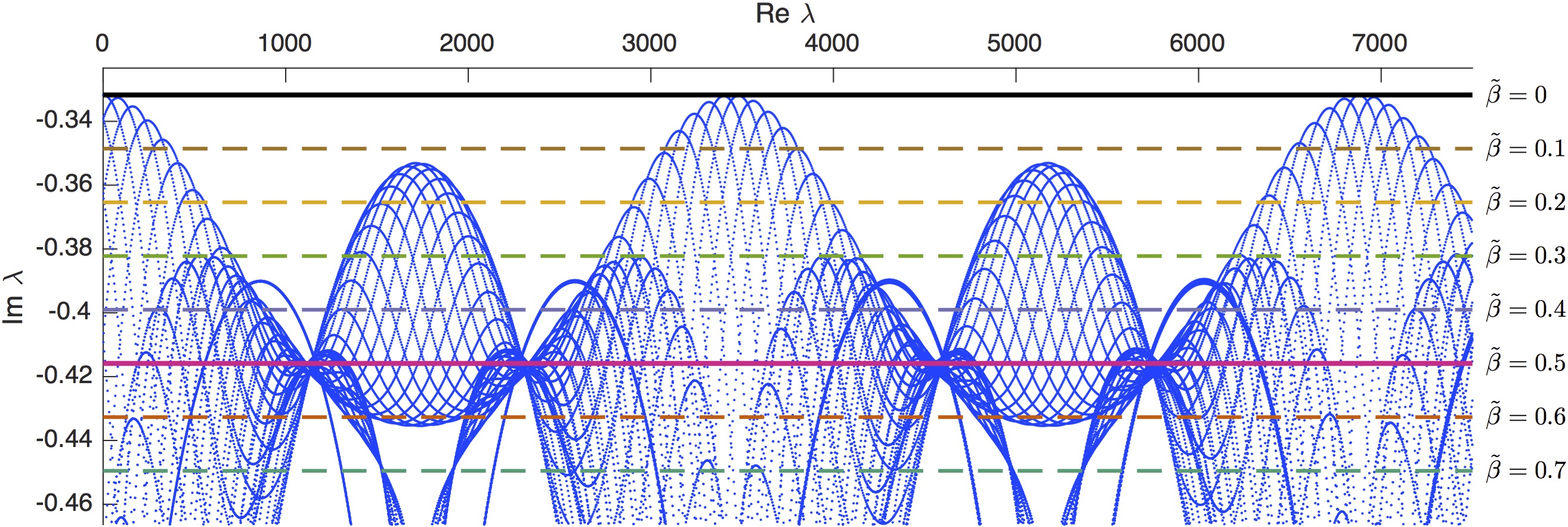}\\
\caption{\label{f:resonances}Plot of the numerically calculated resonances for the 
three-funnel surfaces $X(7,7,7)$ (top), $X(6,7,7)$ (middle) as well as the funneled
torus $Y(7,7,\pi/2)$ (bottom). The horizontal lines indicate the strips
in which the counting function is analyzed in~\S\ref{s:res_asym}.
One clearly sees the concentration phenomenon of the
resonances at $\tilde\beta=0.5$; see~\eqref{e:beta-tilde} below
for the definition of $\tilde\beta$. Additionally one sees alignment of
resonances along characteristic chains which have been studied in~\cite{WBKPS14, BFW14, We15},
as well as concentration of resonance density at $\tilde \beta=1/2$ which has been 
studied in \cite[Section 8]{BorthwickNum}.
}
\end{figure}

In~\cite{BorthwickNum} one of the authors presented an efficient numerical 
algorithm to calculate the resonances on Schottky surfaces. We refer to~\cite{BorthwickNum, Borthwick-Weich} for details and will only recall the main
steps.

The central ingredient for the numerical calculation of resonances on a convex co-compact
hyperbolic surface $M=\Gamma\backslash \mathbb H^2$ is the fact that resonances 
correspond to zeros of the Selberg zeta function $Z_\Gamma(s)$ (\ref{e:selb-zeta}). 
The series expression (\ref{e:selb-zeta}) is only absolutely convergent for 
$\mathrm{Re}(s)>\delta$, thus in the
region where no resonances are located. In the region of interest
$\mathrm{Re}(s)\leq\delta$, 
the zeta function is only given by holomorphic continuation, which is not amenable to 
numerical calculations. One can, 
however, avoid the problem of holomorphic continuation using a method introduced by
Jenkinson--Pollicott~\cite{Jenkinson-Pollicott}. These authors use dynamical zeta functions
for a transfer operator of the Bowen--Series map, which is
an expanding, holomorphic map defined using the generators~$g_i$
\begin{equation}
  \label{e:bowen-series}
\mathcal B: \bigcup_{i=1}^{2r} D_i\ \to\ \mathbb{C}\cup \{\infty\},
\end{equation}
where $D_i$ are the disks associated to the Schottky group. 
This transfer operator approach leads to a more efficient series expansion of the zeta function, which converges 
uniformly on compact sets on the full domain~$s \in \mathbb C$.

A suitable numerical approximation of the Selberg zeta function on any bounded domain 
$B\subset \mathbb C$ can be obtained by truncating the Jenkinson--Pollicott formula.
The zeros can be calculated using efficient adaptive
root finding algorithms for holomorphic functions based on the argument
principle (see e.g. the algorithm QZ-40~\cite{DSQ}). Figure~\ref{f:resonances}
shows the resulting plots of resonances in the complex plane for the surfaces $X(7,7,7)$,
$X(6,7,7)$ and $Y(7,7,\pi/2)$.

In principle, the formulas of Jenkinson--Pollicott can be used to approximate the
Selberg zeta function to arbitrary precision on any compact subset of 
$\mathbb{C}$, simply by including a sufficient number of terms in the truncated series.   
In practice, however, the complexity of the calculations increases 
exponentially as additional terms are included.
In \cite{Borthwick-Weich} two of the authors showed that a
discrete symmetry group of the surface $M$ leads to a factorization of 
$Z_\Gamma(s)$ into holomorphic symmetry-reduced zeta functions.  Using this
factorization, the numerical convergence can be dramatically improved. Still,
for practical purposes  there remain the following restrictions:  First, the calculation of resonances becomes dramatically more 
complicated for higher values of $\delta$. This effectively restricts the calculation of
resonances to surfaces with $\delta\lesssim 0.5$. Second, the calculation of 
$Z_{\Gamma}(1/2-i\lambda)$ becomes 
exponentially difficult for large negative values of $\Im \lambda$.  It is possible 
to calculate the resonances in a strip of the width of a few deltas parallel to the 
real axis, but not much beyond this. 
Third, the calculations also become exponentially complex for high 
values of $\Re\lambda$. However, here the growth of complexity is several orders
of magnitude slower compared to the case of large negative $\Im \lambda$. This allows
the computation of counting functions $\mathcal N(R,\beta)$ for the surfaces 
from Figure~\ref{f:resonances} with $\beta = 0.5-0.3\delta$ up to values of 
$R\approx 10^5$.

\subsection{Upper bounds on resonance density}

We now come to a more detailed examination of resonance densities in strips
$\{\Im\lambda\geq -\beta\}$. Let $M$ be a convex co-compact hyperbolic surface and denote
by $\Res_M$ the set of its resonances.
In order to compare results for different surfaces, it is useful to introduce a rescaled parameter 
\begin{equation}
  \label{e:beta-tilde}
\tilde \beta := \frac{\beta-1/2}{\delta} +1,\quad
\beta={1\over 2}+(\tilde\beta-1)\delta.
\end{equation}
This has the intuitive interpretation that it gives the width of the resonance counting
strip in multiples of $\delta$, with the Patterson--Sullivan gap
$\beta={1\over 2}-\delta$ corresponding to $\tilde\beta=0$ and the value 
$\tilde \beta=1/2$ corresponds to the spectral gap conjecture of \cite{Jakobson-Naud2}. Concentration of resonances near the `classical decay rate' line
$\{\Im\lambda={\delta-1\over 2}\}$ corresponding to $\tilde\beta=1/2$ was first observed (in a different setting)
in~\cite{LSZ}.

For $R,\tilde\beta>0$ we introduce the 
total counting function
\[
N(R,\tilde\beta)=\#\{\lambda\in \Res_M,\ \Re\lambda\in [0,R],\ \Im\lambda\geq -\beta\},
\]
as well as the local counting function (for fixed $L>0$)
\[
 n(R,\tilde \beta,L)=\#\{\lambda\in \Res_M,\ \Re\lambda\in [R,R+L],\ \Im\lambda\geq -\beta\}.
\]
In the case of surfaces, Theorem~\ref{t:ifwl} yields an upper bound on 
\[
n(R,\tilde\beta, 1) = \mathcal N(R,1/2 + (\tilde\beta - 1)\delta) 
\]
Note, however, that the choice $L=1$ in Theorem~\ref{t:ifwl} was made for convenience.
The same estimate applies for arbitrary fixed $L>0$, with an adjustment of the constant.   
Theorem~\ref{t:ifwl} thus implies the bounds
\begin{align}
N(R,\tilde\beta)&\leq C_{\tilde\beta} R^{1+m(\tilde\beta,\delta)+\varepsilon},\quad R\to \infty;\\
n(R,\tilde\beta,L)&\leq C_{\tilde \beta, L}R^{m(\tilde\beta,\delta)+\varepsilon},\quad R\to \infty;
\\
 \label{e:ifwl-exponent-dim2}
\text{with}\quad m(\tilde\beta,\delta)&:=\min(2\tilde \beta\delta,\delta).
\end{align}
As mentioned in the introduction, in the special case of convex co-compact 
hyperbolic surfaces Naud \cite{NaudCount} and Jakobson--Naud~\cite{Jakobson-Naud3}
previously obtained improved upper bounds on resonance densities which
we compare with the bounds of Theorem~\ref{t:ifwl}. Using the estimates
in \cite{Jakobson-Naud3}, in particular \S\S4.3,4.4 and Lemma~4.4 there,
one can derive an upper bound
\begin{align}
n(R,\tilde \beta,L)&\leq C_{\beta, L}R^{m_P(\tilde \beta,\delta)+\varepsilon},\quad R\to \infty; \nonumber
\\
\text{with }m_P(\tilde \beta,\delta)&:=\delta+\min\Big(0,{P(2\delta(1-\tilde\beta))\over \lambda_{\mathrm{max}}}\Big).
\label{e:ifwl-ecponent-press}
\end{align}
In this formula $P(x)$ is the topological pressure of the Bowen--Series map
$\mathcal B$ (see~\eqref{e:bowen-series}) and $\lambda_{\mathrm{max}}:=\max_{z\in \Lambda_\Gamma}\log(\mathcal B'(z))$
is the maximal Jacobian of $\mathcal B$ on the limit set $\Lambda_\Gamma$, 
which coincides with the maximal invariant compact set of the Bowen Series maps.

\begin{figure}
\includegraphics[scale=0.7]{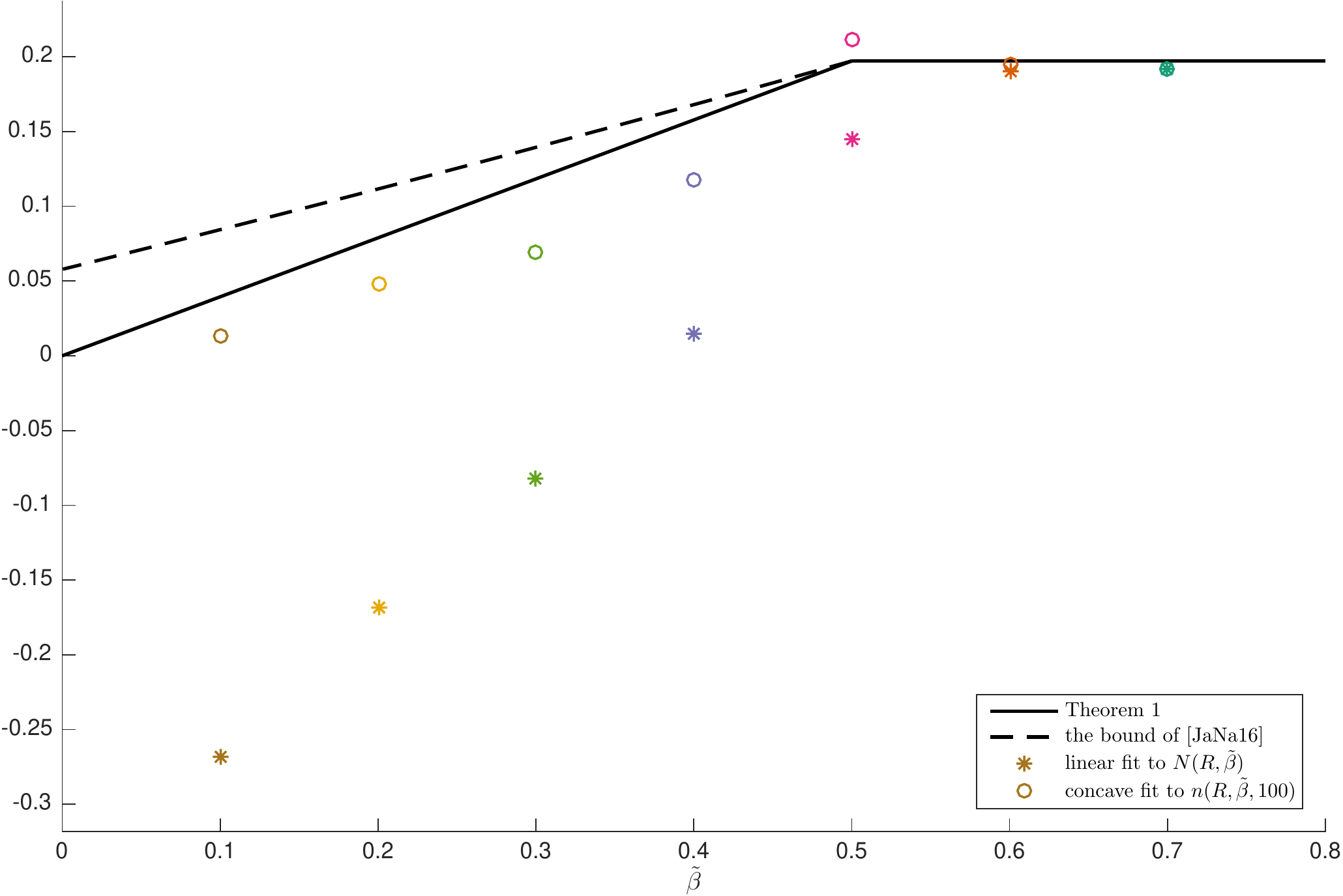}
\caption{\label{f:exponents_777} Comparison of the exponents $m_{\mathrm{fit}, N}$
(stars) and $m_{\mathrm{mean}, n}$ (circles) which have been obtained from the
numerical counting function of $X(7,7,7)$. The solid line shows the upper
bound $m(\tilde\beta,\delta)$ from Theorem~\ref{t:ifwl}, the dashed line the 
bound $m_P(\tilde\beta, \delta)$ of~\cite{Jakobson-Naud3}.
For $\tilde\beta\geq 0.5$, both of these coincide with the previous bound of~\cite{GLZ}.}
\end{figure}

In contrast to (\ref{e:ifwl-exponent-dim2}), the
bound (\ref{e:ifwl-ecponent-press}) depends crucially on the choice of the Schottky marking for a given 
convex co-compact surface. Independently of the choice of the Schottky marking one
however always has the relation
\begin{equation}
  \label{e:fn-relation}
m(\tilde \beta,\delta)\leq m_P(\tilde \beta,\delta).
\end{equation}
This can be seen as follows: For the Bowen--Series maps the topological pressure function
is continuous and monotonically decreasing, and its unique zero is given by~$\delta$.
Consequently $m(\tilde \beta,\delta)= m_P(\tilde \beta,\delta)=\delta$ for $\tilde \beta \geq 1/2$.
Additionally one knows that $P'(x) \geq -\lambda_{\mathrm{max}}$ and consequently 
\[
\frac{d}{d\tilde \beta} m_P(\tilde \beta,\delta)\ \leq\ 2\delta = \frac{d}{d\tilde \beta} m(\tilde \beta,\delta).
\]
for $0\leq\tilde \beta< 1/2$ which implies~\eqref{e:fn-relation}.

\begin{figure}
\includegraphics[scale=0.7]{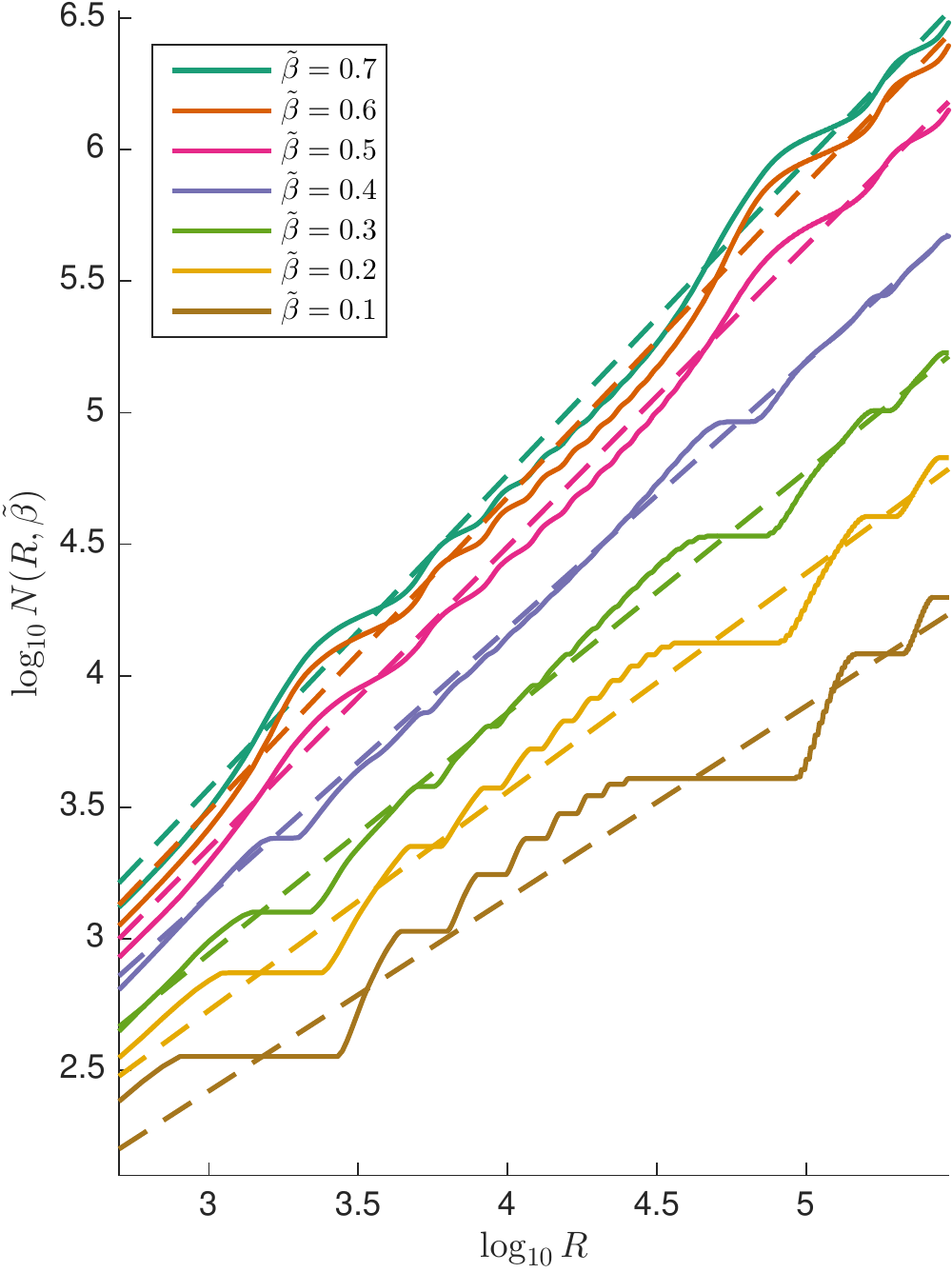}\quad
\includegraphics[scale=0.7]{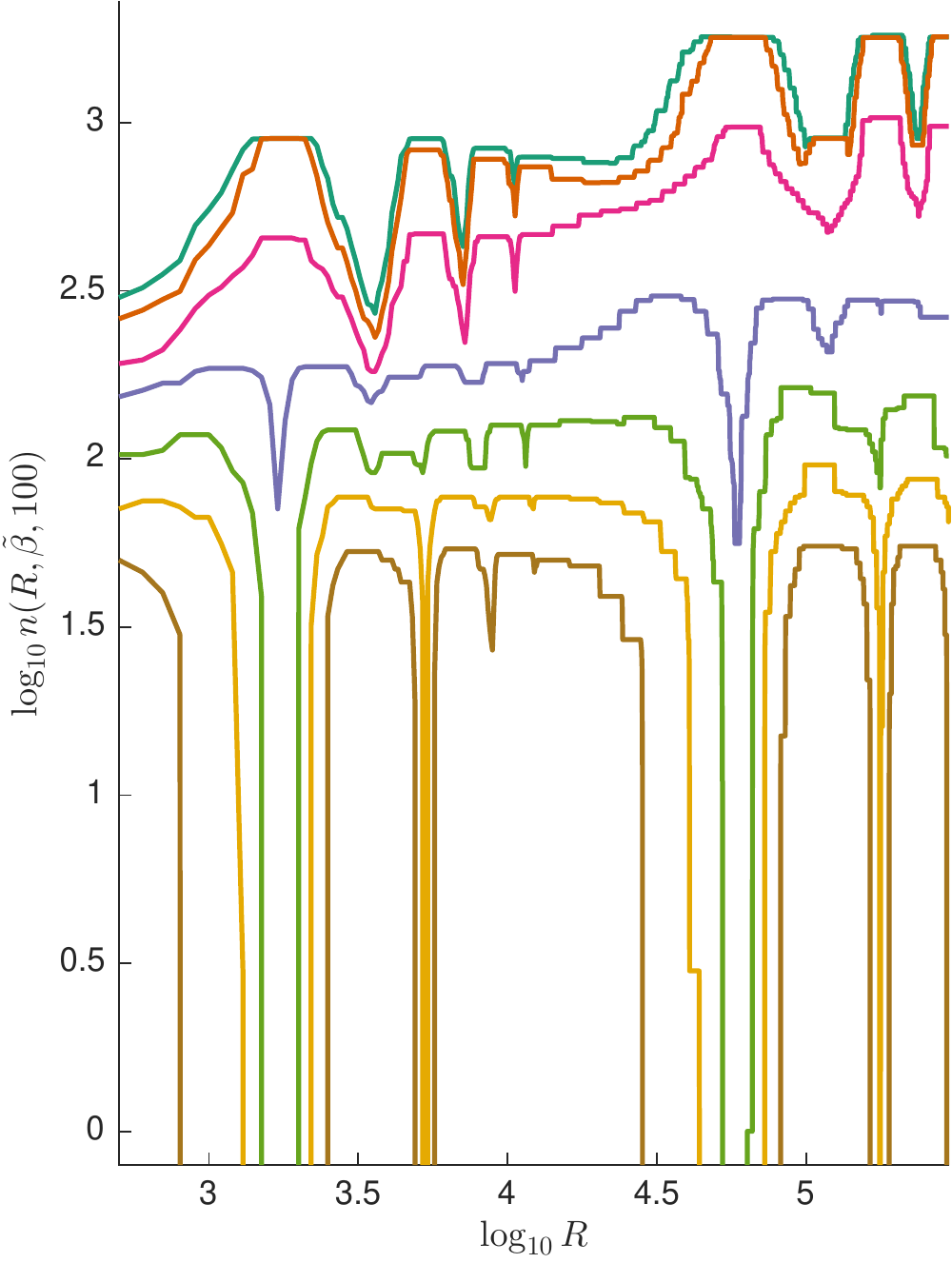}
\caption{\label{f:counting_777} Double logarithmic plot of the total counting 
function (left) and local counting function (right; see~\eqref{e:mollified}) for the three-funnel
surface $X(7,7,7)$ and different
values of $\tilde \beta$. The dashed lines in the left plot indicate linear 
fits to the double logarithmic data points, see~\eqref{e:linfits}.}
\end{figure}

In Figures~\ref{f:exponents_777}, \ref{f:exponents_677}, and \ref{f:exponents_tf77} below
we compare the two bounds for three different convex co-compact surfaces. 
For this purpose the topological pressure has been numerically calculated according to 
\cite{Jenkinson-Pollicott}. One clearly sees that in all cases 
\eqref{e:fn-relation} holds.  While the difference is rather pronounced 
for both three-funnel examples, the difference for the funneled torus is relatively small. 
The expansion rate of the Bowen--Series map for the funneled torus is much more homogeneous
than for the two other examples.   This observation therefore suggests that the two bounds 
become close to each other for surfaces that admit a very homogeneous 
Bowen--Series map.

\subsection{Comparison of theoretical upper bounds with numerics}
\label{s:res_asym}

Let us now compare the upper bounds to numerical calculations of the counting function. Using the approach described in~\S\ref{s:numerical-method}, we calculated $N(R,\tilde \beta)$ for
the surface $X(7,7,7)$ with $\tilde \beta= 0.1,0.2,\ldots, 0.7$
and $R=100, 200,\ldots, 3\cdot 10^5$.
Note that it is not necessary 
to calculate the exact position of the resonances, since the argument principle 
directly allows to calculate the number of zeros of $Z_\Gamma(s)$ in rectangular 
boxes.

A log-log plot of the total counting function is presented in the left part of 
Figure~\ref{f:counting_777}.
We observe that the counting functions behave approximately
linearly, with slopes that clearly decrease with decreasing $\tilde\beta$.  All counting functions also
show clearly visible oscillations, 
which we assume to be due to the fact that we are still in a finite-frequency 
regime. Already in the context of spectral gaps oscillations in the resonance 
pattern have been observed to be persistent up to 
very high frequencies (see~\cite[Figure~13]{Borthwick-Weich}). 
For smaller values of $\tilde \beta$, i.e. for more narrow strips, these oscillations in the
counting function become more pronounced.

We perform a linear regression to the double logarithmic data
\begin{equation}
  \label{e:linfits}
 \log(N(R,\tilde \beta)) \approx (1+m_{\mathrm{fit}, N}) \cdot \log(R) + C,
\end{equation}
where $m_{\mathrm{fit},N},C\in\mathbb R$ are chosen to minimize the sum of
squares of the difference between the left- and right-hand sides of~\eqref{e:linfits}
over all data points $R=500,600,\dots,3\cdot 10^5$.
By this we extract an exponent $m_{\mathrm{fit}, N}$ for every value of $\tilde \beta$,
and compare it to the theoretical upper bound. The parametric dependence of 
$m_{\mathrm{fit}, N}$ on~$\tilde\beta$ is shown in Figure~\ref{f:exponents_777} by the
star shaped symbols. One clearly sees that the data points for large~$\tilde \beta$
(i.e., $\tilde \beta> 0.5$) agree very well with the theoretical bound. For smaller 
values of $\tilde \beta$ the numerical values are clearly below the upper bound, 
but there are rather large deviations. In particular we obtain significantly
negative values for $m_{\textup{fit}, N}$ which implies sublinear growth of the total 
counting function. It would be interesting to understand whether this is only 
due to the restricted frequency range or a phenomenon that can be rigorousely understood.

Let us next turn to the behavior of the local counting functions. 
The right part of Figure~\ref{f:counting_777} shows a double 
logarithmic plot of $n(R,\tilde \beta,100)$ for the surface $X(7,7,7)$ and for 
different values of $\tilde \beta$. Since this function oscillates very rapidly
(see Figure~\ref{f:concave}),
we instead plot the mollified expression
\begin{equation}
  \label{e:mollified}
R\ \mapsto\ \max \{\log_{10} n(R',\tilde\beta,100)\colon |\log_{10}(R/R')|\leq 0.05\}.
\end{equation}
We have chosen $L=100$ as we want 
$L\ll R_{\mathrm{max}} = 3\cdot 10^5$ on the one hand, but on the 
other hand we want $L$ to be large relative to the resonance spacing on the chains,
which is on the order of $1$.  Once again, for different values of $\tilde \beta$  
one observes clear distinctions in the 
growth behavior of $\log(n(R,\tilde \beta,100))$. However, the most prominent
features are the strong oscillations of the local counting functions. 
In particular, for the lower values of $\tilde \beta$, i.e., for the narrower
strips, there are large $R$-ranges devoid of resonances. 

Note, however, that even an optimal asymptotic upper bound for $n(R, \tilde \beta, L)$ 
would not exclude large resonance free ranges in narrow strips along the 
real axis.  Rather it would imply that there is no better upper bound for those
frequency ranges where the resonances accumulate in the strips. It
would thus not be appropriate to extract a numerical exponent for the upper
bound by a linear fit of the double logarithmic plot. Instead, we want a 
method that extracts the mean growth rate of the regions with a high resonance
density.

We therefore chose the following two-step method for the extraction of the exponent
(see Figure~\ref{f:concave}):
\begin{itemize}
\item first, we construct the \emph{concave envelope}
$n_{\mathrm{concave}}(x)$
of the local counting function, which is the pointwise infimum of all
affine functions $x\mapsto ax+b$ which bound the
local counting function on the logarithmic scale:
$$
\max(0,\log_{10}n(R,\tilde\beta,L))\leq a\log_{10} R+b,\quad
R=500,600,\dots,3\cdot 10^5;
$$
\end{itemize}
The resulting concave envelope can be seen as the dashed line 
in Figure~\ref{f:concave}. It can be seen, that this concave 
envelope still contains boundary effects. For example, the end of 
the calculated data range happened to be in a region where 
$n(r,\tilde \beta, 100)$ takes very low values, thus the 
envelope function decays at the end of the data range. This is
obviously an artefact occuring at the boundaries of the 
finite data range. In order to get rid of these effects we perform
the
\begin{itemize}
\item second step: we define the concave envelope fit $m_{\mathrm{mean},n}$
as the slope of the straight line crossing the graph of
$n_{\mathrm{concave}}(x)$ at $x=x_1,x_2$,
where $x_1,x_2$ are the points marking $1\over 4$ and $3\over 4$
of the length in the interval $[\log_{10}(500),\log_{10}(3\cdot 10^5)]$.
\end{itemize}

\begin{figure}
\includegraphics[scale=0.7]{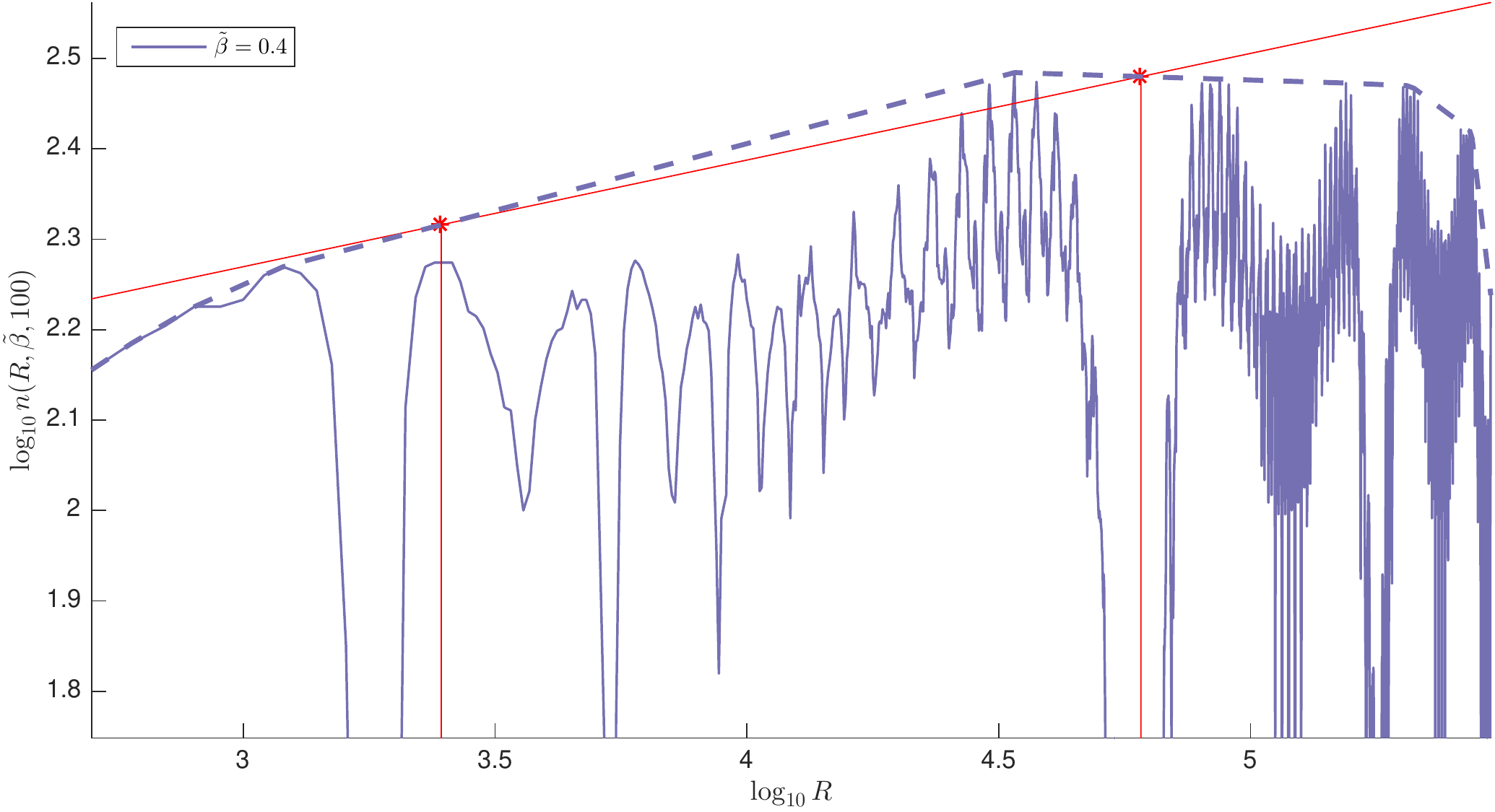}
\caption{An illustration of the concave fitting procedure
for the surface $X(7,7,7)$ and $\tilde\beta=0.4$.
The rapidly oscillating curve is the logarithmic
plot of the local counting function $n(R,\tilde\beta,100)$.
The dashed line is the concave envelope $n_{\mathrm{concave}}(x)$,
and the red line is the secant line of the concave envelope
used to determine the fit $m_{\mathrm{mean},n}$.}
\label{f:concave}
\end{figure}
The $\tilde \beta$ dependence of the quantity $m_{\mathrm{mean},n}$ is plotted 
in Figure~\ref{f:exponents_777} by the circular symbols. For large $\tilde \beta$
(i.e., $\tilde \beta> 0.5$) the exponents extracted by the total counting function
and those extracted from the local counting function agree well with each other
and also with the theoretical upper bound. For lower values of $\tilde \beta$, the exponents 
$m_{\mathrm{mean}, n}$ are significantly larger and quite close to the theoretical 
upper bounds. In view of the strong oscillations
of $n(R,\tilde \beta,L)$ this is very plausible. Fitting the log-log data of the total 
counting function to a linear function implies averaging over the oscillations
of the local counting function. The exponent $m_{\mathrm{fit}, N}$ thus also 
incorporates information of the large ranges where the local counting function
is small, whereas Theorem~\ref{t:ifwl} gives an upper bound on the 
asymptotic behavior of the maxima. 
\begin{figure}
\includegraphics[scale=0.7]{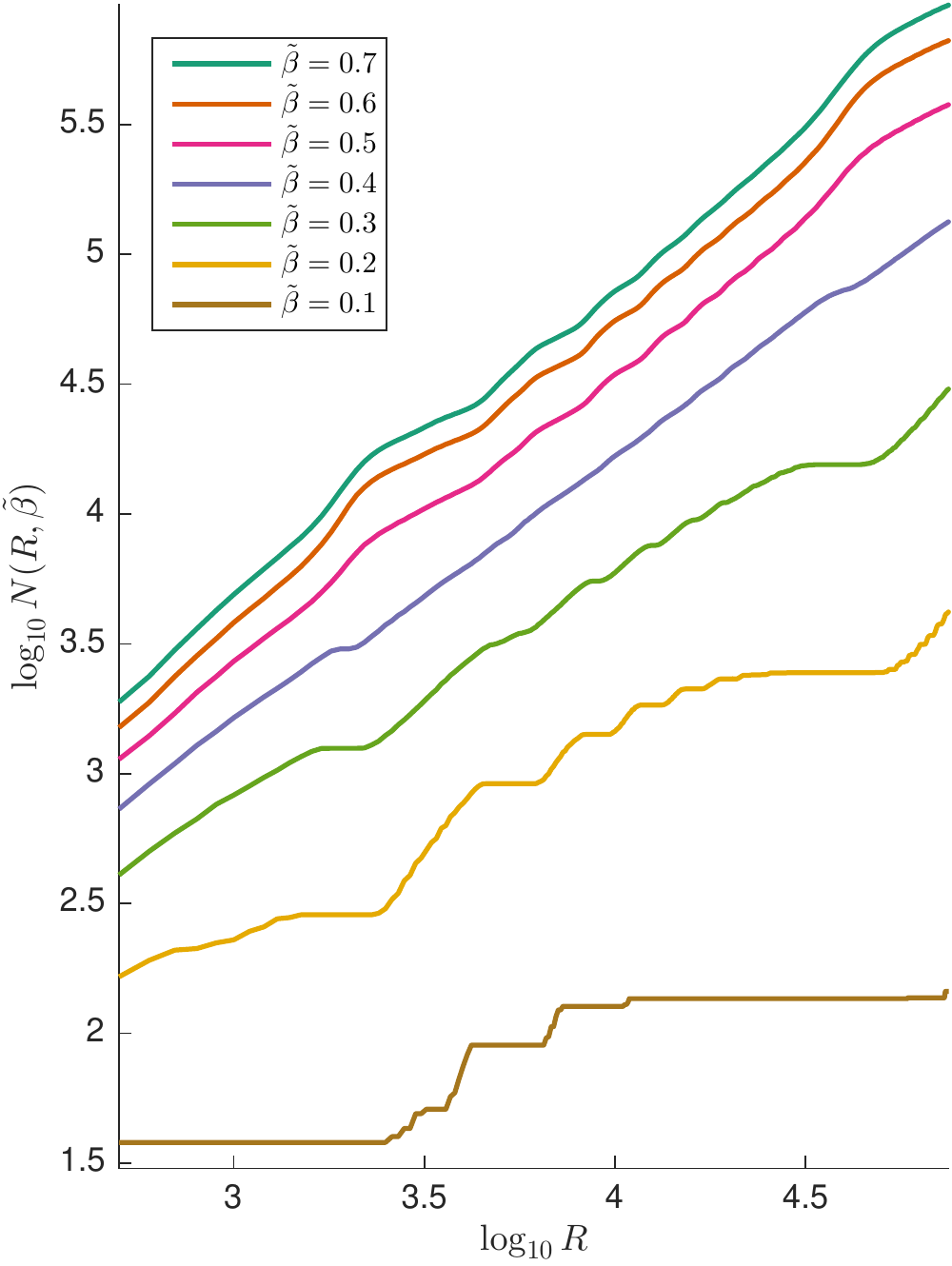}\quad
\includegraphics[scale=0.7]{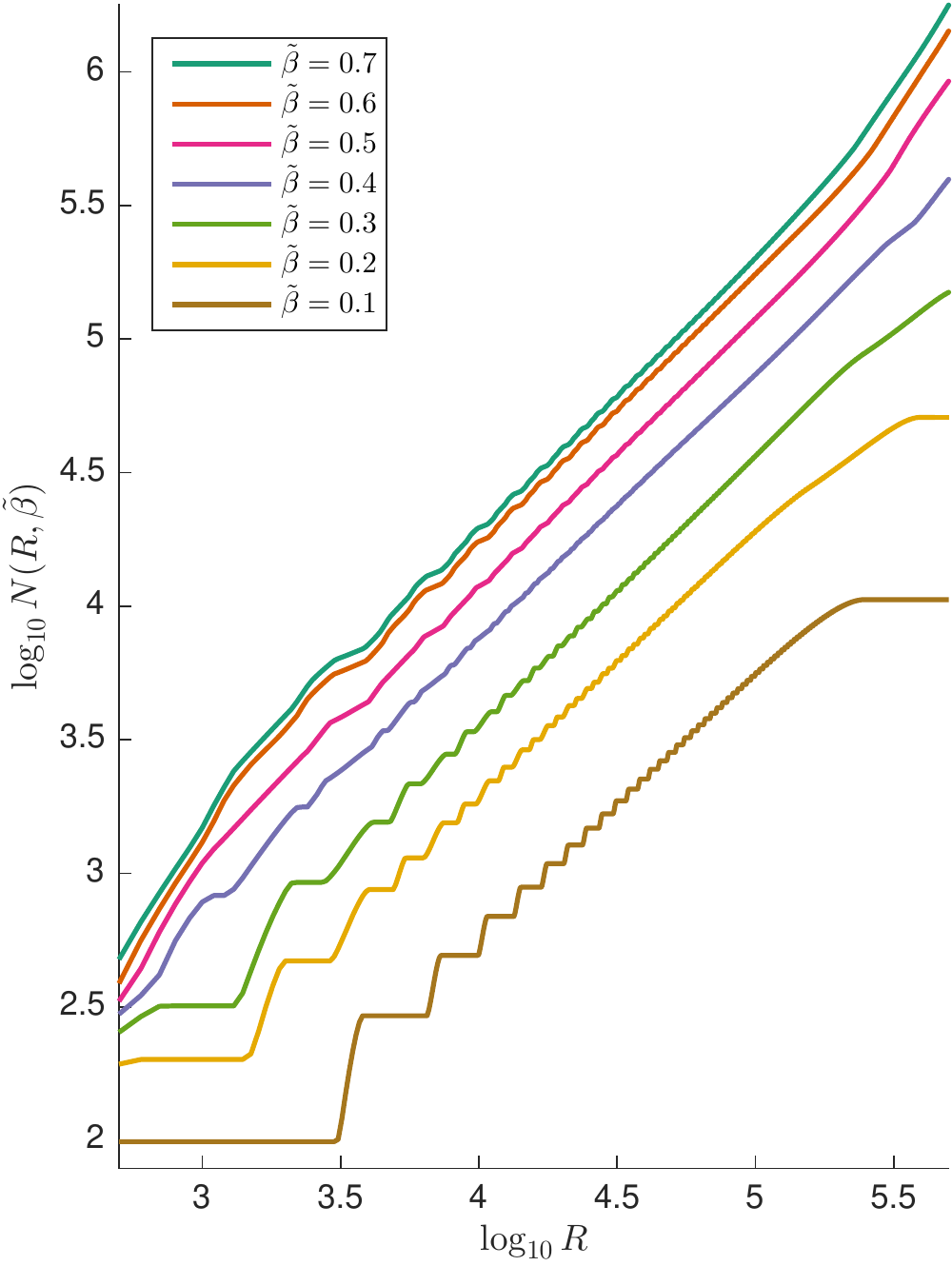}
\includegraphics[scale=0.7]{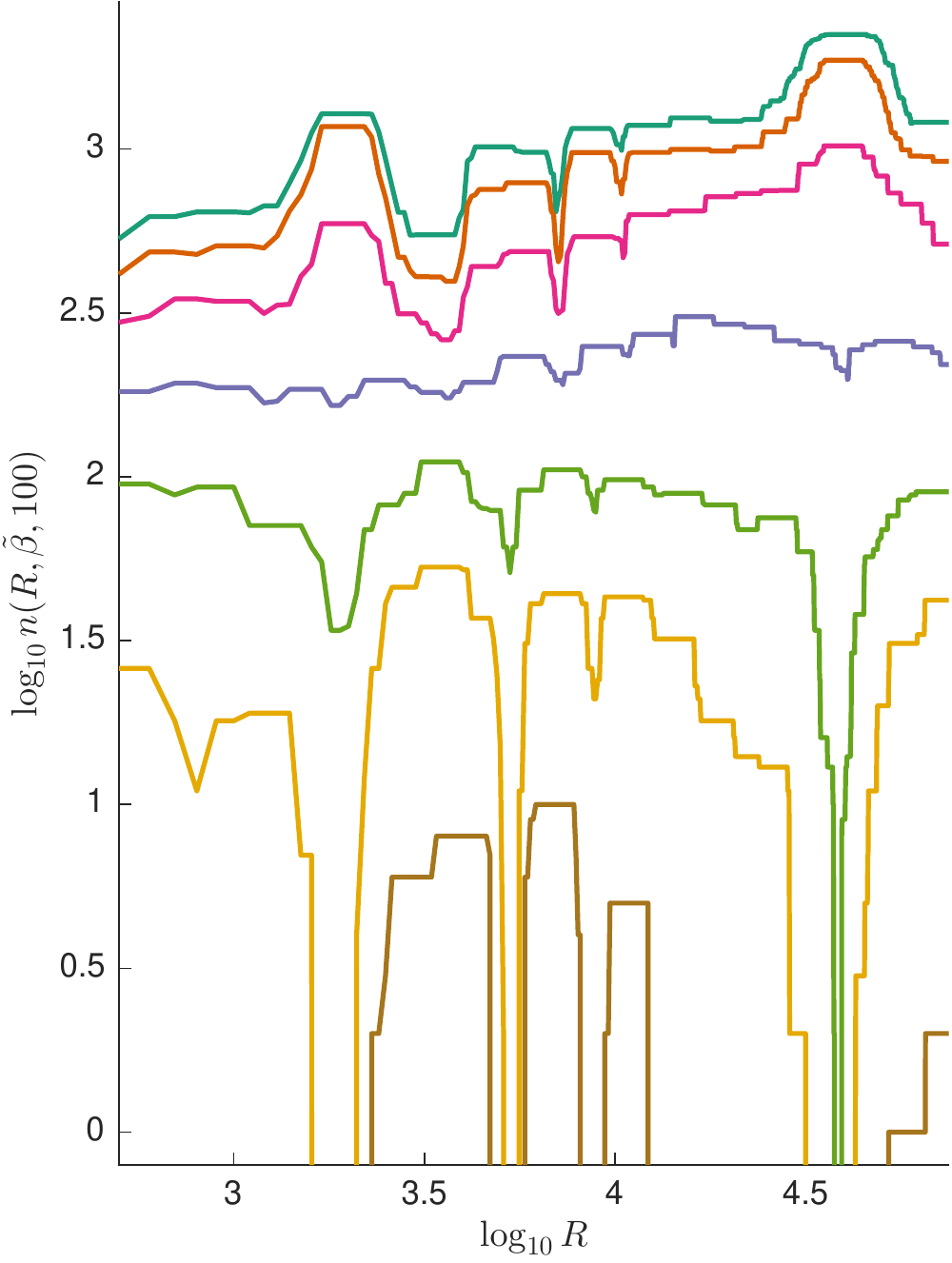}\quad
\includegraphics[scale=0.7]{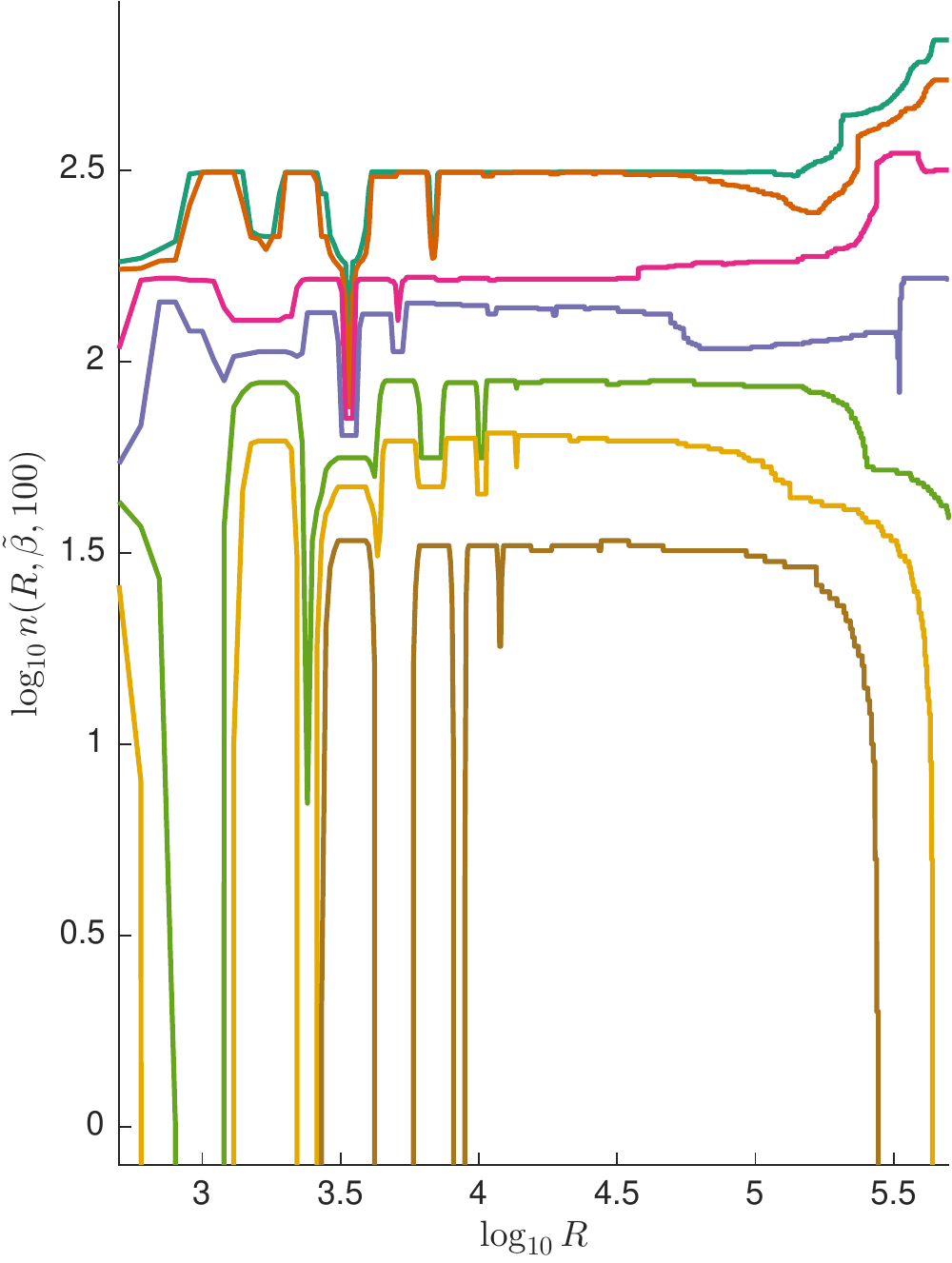}
\caption{\label{f:counting_677_tf77}Double logarithmic plot of the total counting 
function (top) and local counting function (bottom) for the three-funnel
surface $X(6,7,7)$ (left) and the funneled torus $Y(7,7,\pi/2)$ (right),
similar to Figure~\ref{f:counting_777}. Both 
data sets only represent the resonances corresponding to the trivial representation 
of the discrete symmetry groups.}
\end{figure}

Let us finally have a look at two less symmetric surfaces, the three-funnel
surface $X(6,7,7)$ and the funneled torus $Y(7,7,\pi/2)$. As these surfaces 
have a much smaller symmetry group compared to the completely symmetric surface
$X(7,7,7)$, the calculations at high frequencies are much more time-consuming.
We therefore restricted the calculation of the counting function to those resonances
that belong to the trivial representation of the discrete symmetry group 
(c.f. \cite{Borthwick-Weich}). Figure~\ref{f:counting_677_tf77}
shows double logarithmic plots of the total counting function as well as the 
local counting function. Similarly to the surface $X(7,7,7)$ both counting 
functions show oscillating behavior. In particular, for the funneled torus one sees
a visible kink in the counting function right before the end of the 
numerically accessible range, which indicates that one might need to go to 
significantly higher frequencies to see the full asymptotic behavior.  
\begin{figure}
\includegraphics[scale=0.7]{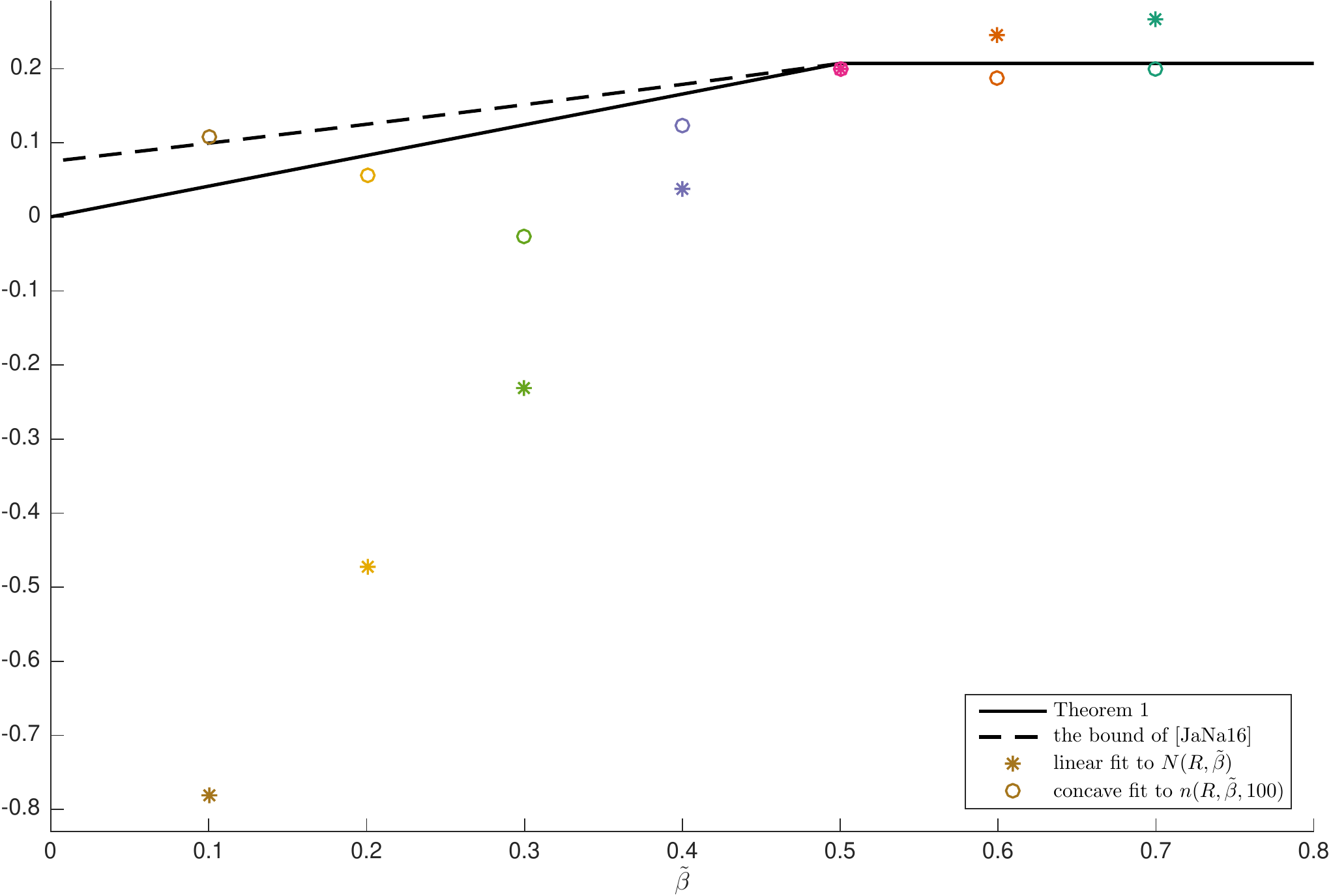}
\caption{\label{f:exponents_677} Same as Figure~\ref{f:exponents_777} but for the
surface $X(6,7,7)$.}
\includegraphics[scale=0.7]{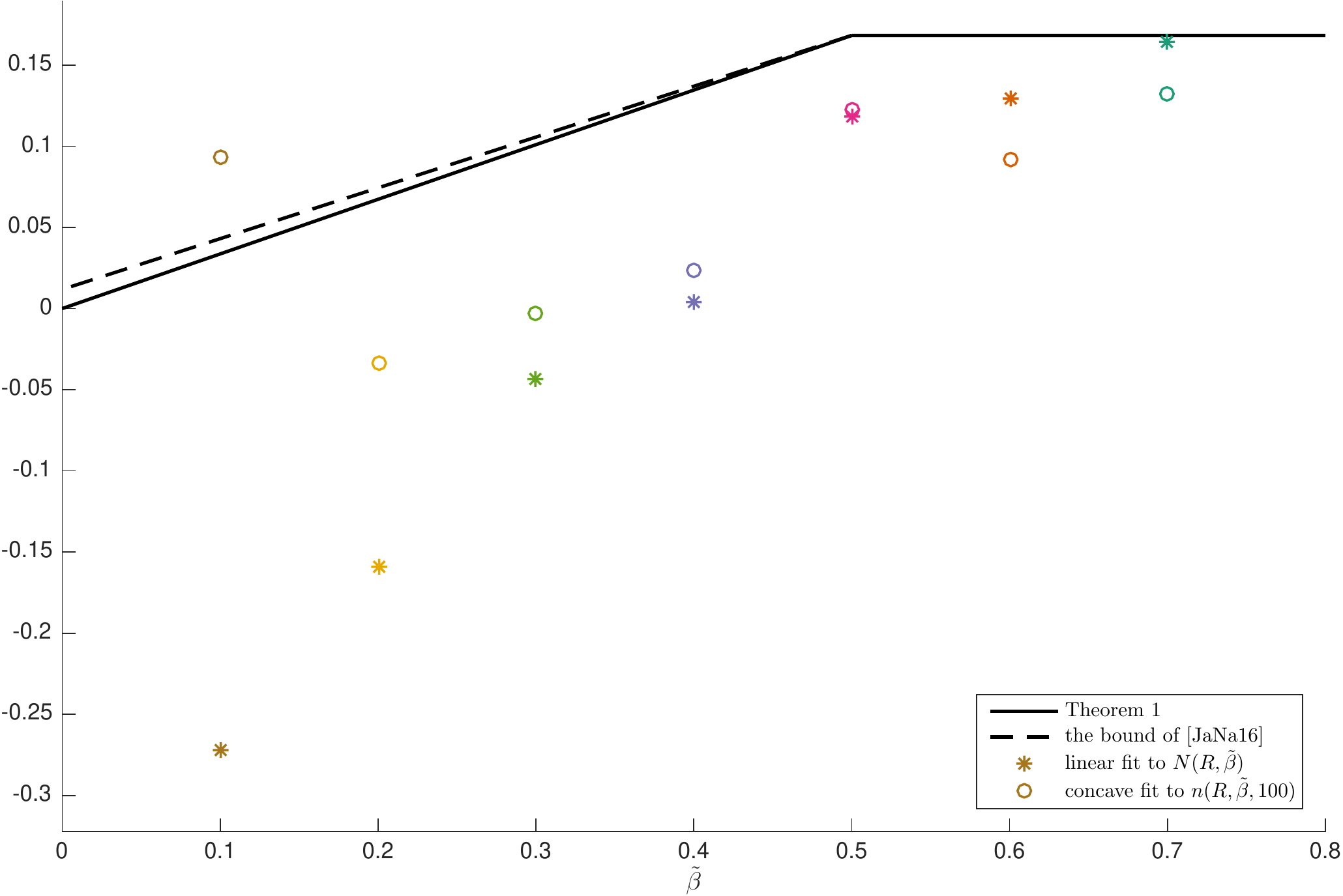}
\caption{\label{f:exponents_tf77} Same as Figure~\ref{f:exponents_777} but for the
surface $Y(7,7,\pi/2)$.}
\end{figure}

By the same procedures as above we extract the exponents $m_{\mathrm{fit}, N}$
and $m_{\mathrm{mean}, n}$ from the numerical data. The $\tilde \beta$ dependence and
a comparison with the prediction of Theorem~\ref{t:ifwl} are shown in 
Figures~\ref{f:exponents_677} and \ref{f:exponents_tf77}. Both figures
show again that the coincidence of the numerical exponent with the upper bounds 
is rather good for $\tilde \beta >0.5$. For lower $\tilde \beta$ values the exponents 
extracted from the concave upper bound are slightly below the upper bound 
of Theorem~\ref{t:ifwl}. Only for the most narrow band with $\tilde \beta = 0.1$ is
the mean exponent above this bound. However in these narrow strips there 
are huge resonance-free frequency ranges.  Thus the counting functions have 
a rather poor statistic, such that the extracted exponents have to be taken with 
caution. Comparing Figures~\ref{f:exponents_677} and \ref{f:exponents_tf77}, one
sees that the exponents for the funneled torus are much less coherent. We attribute
this to the kink described above, and assume  that the data would be more conclusive
if one could go to significantly higher frequency ranges.

In summary, we have compared the numerical data to the theoretical upper bound. 
Using the concave average method we were able to extract exponents which 
describe an asymptotic upper bound for the local counting function. The numerical 
results suggest that while the upper bound from Theorem~\ref{t:ifwl} is not 
completely optimal, it seems not to be far off for the surfaces studied.
In particular, for $X(7,7,7)$ 
(Figure~\ref{f:exponents_777}), where the high symmetry allows the most 
exhaustive numerical calculations (in particular we were able to 
calculate the spectrum of all symmetry classes) 
and which we can thus consider to be the  most reliable case, the 
exponents $m_{\mathrm{mean}, n}$ are close to the theoretical predictions.

\medskip\noindent\textbf{Acknowledgements.}
The authors would like to thank Maciej Zworski, Long Jin,
St\'ephane Nonnenmacher,
Kiril Datchev, and Colin Guillarmou
for many useful discussions regarding this project,
and Fr\'ed\'eric Naud for several discussions of~\cite{NaudCount,Jakobson-Naud3},
in particular explaining the bound~\eqref{e:ifwl-ecponent-press}.
We would also like to thank two anonymous referees for many useful comments
to improve the paper.
This research was conducted during the period SD served as
a Clay Research Fellow. TW has been supported by the 
grant DFG HI 412 12-1.


\end{document}